\documentclass{article}

\usepackage{graphicx} 
\usepackage{algorithm,algorithmic}
\usepackage{amsmath}
\usepackage{amsthm}
\usepackage{amssymb}
\usepackage{color}
\usepackage{bm}
\usepackage{float}
\usepackage{hyperref}
\newtheorem{lemma}{Lemma}[section]
\newtheorem{theorem}{Theorem}[section]
\newtheorem{remark}{Remark}[section]

\usepackage[textsize=tiny]{todonotes}
\usepackage{fullpage}

 \usepackage{cite}

\title{On a blended Anderson / NGMRES acceleration for the Navier-Stokes Picard iteration: convergence analysis and adaptive depth selection
}
\author{Yunhui He\thanks{Department of Mathematics, University of Houston, 3551 Cullen Blvd, Room 641, Houston, Texas 77204-3008, USA 
  (\tt{yhe43@central.uh.edu}).}
\and Leo G Rebholz\thanks{School of Mathematical and Statistical Sciences, Clemson University, Clemson, SC 29634, USA ({\tt{rebholz@clemson.edu}}). This work was partially supported by Department of Energy grant DE-SC0025292.}  }

\begin{document}

\maketitle
\begin{abstract}
Anderson acceleration (AA) and nonlinear GMRES (NGMRES) have been successfully applied to improve the efficiency and robustness of many nonlinear solvers, with each being shown to have certain advantages over the other.  There are two differences between the AA and NGMRES algorithms: the extrapolations are different, and AA uses fixed point residuals in the optimization problem while NGMRES uses true nonlinear residuals.  Motivated by recent AA and NGMRES analyses and numerical testing, we consider a blend of AA and NGMRES that uses the AA extrapolation and the true nonlinear residuals for the optimization problem.  To focus our study, we consider the blended method applied to the Picard iteration for the Navier-Stokes equations, but many of the ideas developed herein are extendable to a wide variety of  nonlinear systems and solvers.  We provide a convergence analysis and proof of acceleration of the blended method, provide a detailed numerical comparison of AA, NMGRES and the blended method for several benchmark tests, and develop an effective adaptive depth strategy.

\end{abstract}

\vskip 0.3cm {\bf Keywords.} Anderson acceleration, nonlinear GMRES, least-squares problem, convergence analysis, Navier-Stokes equations, adaptive strategy

\vskip 0.3cm {\bf MSC.} 65N22, 35Q30,  65N30, 65H10


\section{Introduction}
There has recently been an explosion of interest in the development and analysis of improved methods for solving nonlinear partial  differential equations and nonlinear optimization problems arising from data science and machine learning. Recent achievements include significant advances in nonlinear acceleration \cite{FaSa09,saad2025acceleration,brezinski2018shanks,wang2021asymptotic}, nonlinear Krylov methods \cite{brown1994convergence,brune2015composing,he2024nltgcr,werner2025nlkrylov,hager2006survey,brown1990hybrid,washio1997krylov,oosterlee2000krylov}, and others \cite{wang2022nonlinear,henson2003multigrid}.

 The purpose of this paper is to study an acceleration method constructed as a blend of Anderson acceleration (AA, Algorithm \ref{alg:AA}) and  nonlinear GMRES (NGMRES, Algorithm \ref{alg:NGMRES}).  
AA and NGMRES are similar in form, but have two key differences: first, their extrapolations are different; and second, AA uses fixed point residuals in its optimization problem while NGMRES uses true nonlinear residuals.  Motivated by recent analyses e.g. \cite{PR21,PR25,HR26} in an attempt to capture better properties of each method, the blended method we consider uses the AA extrapolation and the true nonlinear residuals in its optimization problem.  Such a blend was first proposed this year in \cite{he2026propertiespng}, and was named `modified Anderson acceleration' and denoted as AAg (Algorithm \ref{alg:AAg-cons}).  AAg has not yet been analyzed or tested except for application to linear problems, and herein to focus our study we consider AAg applied to the Picard iteration for the Navier-Stokes equations (NSE).  

To formally define AAg, consider solving $g(u) = 0$ with an iterative scheme 
defined by a fixed point function $q$, which might converge very slowly (or not all all). 
AAg uses the nonlinear residual operator $g(\cdot)$ instead of the fixed-point iteration residual for the least-squares problem.   The main idea of the AAg least squares problem is the same as the least-squares problem defined in \eqref{eq:min-NG} for NGMRES, and we
 aim to accelerate the sequence of
iterates generated by the associated fixed point function $q(\cdot)$. Writing $u^q_j = q(u_j)$, the new update is
based on the linear approximation of the nonlinear operator $g$ 
in the space
$$u_k^q+ \text{span}\{u_k^q-u_{k-1}^q, u_k^q-u_{k-2}^q,\cdots, u_k^q-u_{k-m}^q\}.$$
We approximate $g$
near $u^q_k$
by the Taylor expansion
\begin{align}
g(u_k^q+ \sum_{i=1}^m\xi_i(u_k^q-u_{k-i}^q))&\approx g(u_k^q) + \sum_{i=1}^m \xi_i \frac{\partial g}{\partial u}|_{u_k^q} (u_k^q-u_{k-i}^q) \nonumber\\
&\approx  g(u_k^q) + \sum_{i=1}^m  \xi_i (g(u_k^q)-g(u_{k-i}^q)).\label{eq:NGMRES-expansion}
\end{align}
If we replace $m$ by $m_k=\min \{m, k\}$ and set the new update as $u_{k+1}=u_k^q+ \sum_{i=1}^{m_k}\xi_i(u_k^q-u_{k-i}^q)$, we obtain Algorithm \ref{alg:AAg-cons}. 

AA \cite{Anderson65} is one of the most popular nonlinear acceleration methods for improving fixed-point iterations. The framework of AA is presented in Algorithm \ref{alg:AA}, and its use is straightforward: the current iterate is updated by forming a linear combination of the current and $m$ previous
 fixed-point iterates. At each step, the combination coefficients are obtained by solving a small least-squares problem to minimize the fixed-point residual, and hence the computational cost of solving the least-squares problem is typically negligible compared with evaluating a single fixed-point iteration. The depth $m$ is a prescribed parameter, typically chosen to be small. 
 Over the last 20 years, AA has been shown to accelerate and enable convergence on a very wide range of nonlinear (and linear) problems \cite{WaNi11,PRTX25,FZB20,WSB21_AA,Yang2021_AA,TKSHCP15,AJW17,FAC19_AA,PDZGQL2018_AA,EPRX20,X23,Atanasov2016SteadyStateAA,wang2021asymptotic,bian2022anderson}, and many new variants have been proposed \cite{chen2022composite,he2025generalizedAA,feng2025convergence,barnafi2025two,tang2024anderson}.  The behavior of AA at a single step and its impact on convergence is now at least generally understood, see \cite{PRX19,EPRX20,PR21,PR25}.  Work has also been done regarding the choice of the depth parameter:  A filtering strategy has been proposed in \cite{PR23} to control the condition number of the least-squares matrix, which has been shown to be effective at improving convergence behavior on a range of problems. Also, in \cite{forbes2021anderson, PR21, PR23}, the authors show that using a small depth in the initial iterations and switching to a larger depth later can enhance AA performance. 
 Several open questions remain for AA, including: what can be proven about asymptotic convergence,  what is best way to adaptively choose relaxation and depth parameters, why is the extrapolation used by AA  done in this particular way, and why does the least-squares problem use fixed point residuals instead of the true nonlinear residuals.

NGMRES is another nonlinear solver that has attracted considerable recent attention \cite{washio1997krylov,oosterlee2000krylov,sterck2012nonlinear,sterck2021asymptotic,he2026IMA},  see Algorithm \ref{alg:NGMRES}, and its new developments \cite{he2026convergence,he2025generalized}. 
NGMRES has recently come under study, and understanding it and the blended method studied herein may also provide insights into AA.   Some important recent results for NGMRES include
that under certain conditions full depth NGMRES applied to the Richardson iteration for the linear problem is the same as classical GMRES \cite{GreifHe25NGMRES},  full depth AA and classical GMRES are equivalent under certain conditions for linear problems  \cite{WaNi11}, NGMRES evolves as a nonlinear generalization of GMRES that minimizes the residual for linear systems as does full depth NGMRES applied to preconditioned Richardson iteration and preconditioned GMRES \cite{he2026propertiespng}.

The recent work on NGMRES in \cite{HR26} for the Picard iteration for Navier-Stokes equations (NSE) reveals several important differences between AA and NGMRES for this PDE/iteration, at least comparing to best known AA analysis \cite{PRX19,PR21,PR25}.  First, the convergence analysis for NGMRES is much simpler and straightforward than for AA.  Second, NGMRES was observed to perform better than AA for small constant depths.  Third, the analysis in \cite{HR26} suggests that NGMRES may be better suited for use with superlinearly convergent methods since its residual expansion's higher order terms are of higher order than those of AA.  Fourth, NGMRES convergence analysis has an important feature not present in AA analysis: NGMRES convergence analysis reveals an easily computable quantity that accurately predicts the linear convergence rate at each iteration, and thus by monitoring this quantity it can be determined how much relative effect the higher order terms from the convergence analysis' residual expansion have on the convergence at any given step.  On the other hand, using NGMRES can be more expensive than AA since it is shown in \cite{HR26} that using the Hilbert space dual norm in the optimization problem (instead of $\ell^2$) can be critical for best convergence behavior, but using the dual norm requires an additional linear solve at each NGMRES iteration (for the NSE Picard iteration, this means an extra Stokes solve at each iteration).

%

%

The analysis and computational testing herein provides several important results.  First, the convergence analysis for AAg is short and relatively straightforward, it identifies that the gain of the optimization problem is the mechanism responsible for AAg acceleration, and also reveals a simple way to calculate term that predicts the linear convergence rate at each step (and the numerical tests illustrate its excellent prediction accuracy).  These features resemble those of NGMRES and not AA, and thus they are most likely associated with the use of the true nonlinear residuals in the optimization problem.  Second, the AAg convergence analysis' residual expansion higher order terms are less than quadratic.  Hence this property is most likely associated with the extrapolation.  Third, we provide systematic comparisons of AA, NGMRES and AAg for three benchmark problems and find that for constant depths, AA and AAg had similar convergence behavior, and compared to NGMRES we observe that AA and AAg were generally slightly better for large constant depths but NGMRES was slightly better for small constant depths.  Only for the one test problem, channel flow past a block at Reynolds number 150 (the time dependent regime begins near 50 and so we are solving for a steady solution well into the time dependent regime) was there a dramatic difference: AA and AAg performed much better than NMGRES on this test.

We also show herein that AAg's ability to easily calculate the linear convergence rate and compare it to the actual convergence rate at any step is very useful.  The analysis of AAg, as well as AA and NMGRES, shows that there is a tradeoff in increasing the depth since the convergence rate is the sum of a first order term and higher order terms: increasing the depth decreases the first order term since the optimization problem is done in a higher dimension, but increases the negative effect of the higher order terms (by adding more and older terms to the sum).  Thus AAg can quickly check on the fly whether the higher order terms are significant by checking if the actual convergence rate is the same as the predicted linear convergence rate at any step.  If they are nearly equal, the depth should be increased since AAg will then get a gain from the optimization problem but not experience the negative effects of the higher order terms.  We show this is very effective in reducing the number of iterations needed for convergence.
%
%
%

We now state Algorithms \ref{alg:AA} and \ref{alg:NGMRES} for AA and NGMRES (AAg is given in Section 3).  The notation $\| \cdot \|_{q}$ denotes the norm of the range space of $q$ and $\| \cdot \|_{g}$ denotes the norm of the range space of $g$. In practice, often $\ell^2$ is chosen, but in certain settings using the appropriate function space norms to match the theories provide better results \cite{HR25,HR26}.

	\begin{algorithm}[H] 
		\caption{AA($m$):  Anderson acceleration with depth $m$} \label{alg:AA}
		\begin{algorithmic}[1] 
			\STATE  \textbf{input:} $u_0$, fixed-point operator $q(\cdot)$,  and $m\geq0$ with $m\in\mathbb{N}$
			\FOR {$k=0,1,\cdots$ until convergence }
			\STATE compute 
			\begin{equation}\label{eq:ukp1-AA} 
				u_{k+1} = q(u_k) + \sum_{i=1}^{m_k}\tau_i^{(k)} \left(q(u_k)- q(u_{k-i}) \right),
			\end{equation}
			where $m_k=\min\{k,m\}$ and $\bm{\tau}^{(k)}=\big(\tau_1^{(k)},\cdots, \tau_{m_k}^{(k)}\big)$ is obtained by solving the  least-squares problem
			\begin{equation}\label{eq:min-AA} 
				\min_{\bm{\tau}^{(k)}} \left\|w(u_k)+\sum_{i=1}^{m_k} \tau_i^{(k)} \left(w(u_k)-w(u_{k-i}) \right) \right\|^2_q,
			\end{equation}
            where $w_{k+1}=w(u_k)=q(u_k)-u_k$ is the fixed-point iteration residual.
			\ENDFOR
        \STATE  \textbf{output:} $u_{k+1}$ 
		\end{algorithmic}
	\end{algorithm}

  	\begin{algorithm}[H] 
 		\caption{NGMRES($m$):  Nonlinear GMRES with depth $m$ for solving $g(u)=0$} \label{alg:NGMRES}
 		\begin{algorithmic}[1]
 			\STATE  \textbf{input:} $u_0$, fixed-point operator $q(\cdot)$, and $m\geq0$ with $m\in\mathbb{N}$
 			\FOR {$k=0,1,\cdots$ until convergence}
 			\STATE compute 
 			\begin{equation}\label{eq:ukp1-NG} 
 				u_{k+1} = q(u_k) + \sum_{i=0}^{m_k}\beta_i^{(k)} \left(q(u_k)- u_{k-i} \right),
 			\end{equation}
 			where $m_k=\min\{k,m\}$ and $\bm{\beta}^{(k)}=\big(\beta_0^{(k)},\beta_1^{(k)},\cdots, \beta_{m_k}^{(k)}\big)$ is obtained by solving the  least-squares problem
 			\begin{equation}\label{eq:min-NG} 
 				\min_{\bm{\beta}^{(k)}} \left\| g(q(u_k))+\sum_{i=0}^{m_k} \beta_i^{(k)} \left(g(q(u_k))-g(u_{k-i}) \right) \right\|^2_g.
 			\end{equation}
 			\ENDFOR
            \STATE  \textbf{output:} $u_{k+1}$
 		\end{algorithmic}	 
 	\end{algorithm}

 The remainder of this work is organized as follows. In section \ref{sec:prelim}, we provide a brief introduction to the NSE, its associated Picard iteration, and various properties. In section \ref{sec:analysis}, we present a convergence analysis for AAg with depth one and then with general depth. Numerical results for 2D channel flow past a block, 3D driven cavity, and a 3D stenotic artery model are presented in section \ref{sec:num}, where we show the performance of AAg, AA and NGMRES along with the new adaptive depth strategy. Finally, we draw conclusions in section \ref{sec:con}.

\section{Notation and Preliminaries}\label{sec:prelim}

We consider the domain $\Omega \subset \mathbb{R}^d$ ($d$=2 or 3) to be an open connected set. We use the notation $(\cdot,\cdot)$ and $\|\cdot\|$ for the $L^2(\Omega)$ inner product and norm, respectively.  Other norms will be labeled with subscripts.  

The natural velocity and pressure spaces for the NSE are defined by
	\begin{align*}
			& X:=
		\{v\in H^1\left(\Omega\right): v=0~~\text{on}~ \partial\Omega\},\\
		&Q:=\{q\in {L}^2(\Omega): \int_{\Omega}q\ dx=0\},
		\end{align*}
		and the divergence-free velocity space is given by
		\[
		 V:=
		\{v\in X:  (\nabla \cdot v,q)=0\ \forall q\in Q\}.
		\]
Since $(X,Q)$ satisfy an inf-sup condition \cite{GR86}, analysis can be equivalently performed in the space $V$.  The dual space $X'=H^{-1}(\Omega)$, with norm $\| \cdot \|_{-1}$.  The dual space of $V$ is denoted  by $V'$, and uses norm $\| \cdot \|_{V'}$.  We will abuse notation slightly and use $(\cdot,\cdot)$ also for dual pairings of $V$ and $X$ with their dual spaces.

We recall from \cite{Laytonbook} some well-known properties for the nonlinear form $(v\cdot\nabla w,\chi)$.  First, if $v\in V$ and $w\in X$, then 
\[
(v\cdot\nabla w,w)=0.
\]
Additionally, for $v,w,\chi\in V$, 
\begin{align}
(v\cdot\nabla w,\chi) &\le M \| \nabla v \| \| \nabla w \| \| \nabla \chi \|, \label{bbounds} \\
\| v\cdot\nabla w \|_{V'} &\le M \| \nabla v \| \| \nabla w\|. \label{bbounds2}
\end{align}
This paper works primarily in the space $V$ and its dual space.  The above results, and the rest of the results throughout this paper, would be nearly identical if we worked in the discretely divergence-free subspace $V_h$ derived from an inf-sup stable velocity-pressure pair $(X_h,Q_h)\subset (X,Q)$, provided that a skew-symmetric form of the nonlinearity is used if divergence-free finite elements were not used, e.g. skew-symmetric form, rotational form or EMAC \cite{JLMNR17,CHOR17}.

\subsection{NSE preliminaries}
The steady NSE are given by
\begin{equation}\label{NS}
		\left\{\begin{aligned}
			-\nu \Delta u+u\cdot\nabla u+ \nabla p&={f} \quad \text{in}~\Omega,\\
			\nabla\cdot {u}&=0\quad \text{in}~\Omega,\\
			{u}&=0 \quad \text{on}~~\partial\Omega,
		\end{aligned}\right.
\end{equation}
where $u$ is the velocity and $p$ is the pressure, $\nu>0$ is the kinematic viscosity and ${f}$ is a given external forcing. The Reynolds number $Re=O(\frac{1}{\nu})$ is another physical constant often used to describe the complexity of a flow.  We consider the system \eqref{NS} equipped with homogeneous Dirichlet boundary conditions for simplicity of analysis, but our results are extendable to nonhomogeneous mixed Dirichlet/Neumann boundary conditions and to a time dependent NSE scheme at a fixed time step.  

The weak form of \eqref{NS} is: Find $u\in V$ such that
\begin{equation}
\nu(\nabla u,\nabla v) + (u\cdot\nabla u,v) = (f,v)\ \ \forall v\in V.\label{NSw}
\end{equation}
It is known that solutions to \eqref{NSw} exist for any $\nu>0$ and $f\in H^{-1}(\Omega)$ \cite{Laytonbook}, and are bounded by 
\begin{equation}
\| \nabla u \| \le \nu^{-1} \| f \|_{-1}. \label{nsstab}
\end{equation}
Under a small data sufficient condition $\kappa:=M\nu^{-2} \|f \|_{H^{-1}}<1$, solutions are known to be unique \cite{GR86,Laytonbook,temam}.  However, for $\kappa$ large enough, \eqref{NSw} can have multiple solutions \cite{Laytonbook}.

\subsection{Picard and nonlinear residual preliminaries}

The Picard iteration for solving \eqref{NSw} is: Given $\omega_k\in V$, find $q(\omega_k)\in V$ such that 
\begin{equation}
\nu(\nabla q(\omega_k),\nabla v) + (\omega_k \cdot\nabla q(\omega_k),v) = ( f,v )\ \forall v\in V. \label{NSEPic}
\end{equation}
It is shown in \cite{PRX19} that the solution operator $q:V\rightarrow V$ associated with \eqref{NSEPic} is well-defined and is uniformly bounded:
\begin{equation}
\| \nabla  q(\omega_k) \|  \le \nu^{-1} \| f\|_{-1}. \label{Picbound}
\end{equation}  
Thus, we can define the fixed-point Picard iteration as $\omega_{k+1}=q(\omega_k)$.  In AAg (and AA), Picard is used as a first step and then a second step is used as a type of extrapolation or correction.  Thus we will also use the notation $\tilde \omega_{k+1}=q(\omega_k)$ for a Picard iterate in that context.  The Picard fixed-point iteration residual satisfies \cite{GR86}
\begin{equation}
 \| \nabla (q (\omega_{k})-\omega_{k} ) \| \le \kappa \| \nabla ( q(\omega_{k-1})-\omega_{k-1}) \|. \label{Picresid}
\end{equation}

Equation \eqref{Picresid} implies that $\kappa<1$ yields a contractive Picard iteration that will converge linearly with rate $\kappa$ to the unique weak solution.  For sufficiently large $\kappa$, it is known that the Picard iteration will diverge \cite{PR25}.  Herein, we assume that the Picard iteration is contractive by assuming that $\kappa<1$.  This assumption is needed in our analysis for the higher order terms, although in our numerical tests the we use $\kappa \gg 1$, i.e. far above the range where Picard is contractive.  This restriction on $\kappa$ is common for steady NSE numerical scheme analysis, as not surprisingly analysis gets much more difficult when solutions are not necessarily unique.  The issue needed to overcome this assumption is a generalization of Lemma \ref{elem} which does not assume $\kappa<1$, which has so far eluded the authors.

We also consider the NSE nonlinear residual operator, which is defined by $g:V \rightarrow V'$ satisfying for all $v\in V$ and $\chi\in V$, 
\begin{equation}
( g(v),\chi ) =   \nu (\nabla  v,\nabla \chi) + (v\cdot\nabla v,\chi) - (f,\chi). \label{gdefn}
\end{equation}
The $V'$ norm of $g(v)$ is thus defined by
\begin{equation}\label{eq:def-V'norm}
\|g(v)\|_{V'} = \max_{0\neq \chi \in V} \frac{(g(v),\chi)}{\|\nabla \chi\|}.
\end{equation}
Due to $v\in V$, the $v\cdot\nabla v$ term cannot be expected to be in $L^2(\Omega)$, and thus $V'$ is the appropriate norm for measuring $g(v)$.  

 \section{Convergence analysis of AAg}\label{sec:analysis}

We now consider convergence analysis of AAg applied to the Picard iteration for the NSE.  Hence in what follows, $g$ is the nonlinear residual defined by \eqref{gdefn}, and $q$ is the NSE Picard iteration defined by \eqref{NSEPic}. In this setting with Picard for the NSE, $\| \cdot \|_{V'}$ is the $g$-norm used in the
the least-squares problem given by \eqref{eq:min-AAg-cons} in Algorithm \ref{alg:AAg-cons}, which expresses the combination coefficients in an equivalent constraint-based format. For simplicity, since we do a single step analysis, we omit the superscript that appears in the coefficients $\alpha_{k+1-i}^{(k)}$ in the following analysis.

   \begin{algorithm}[H] 
		\caption{AAg($m$): Anderson acceleration with depth $m$ for $g(u)=0$ using true nonlinear residuals} \label{alg:AAg-cons}
		\begin{algorithmic}[1] 
			\STATE  \textbf{input:} $u_0$, fixed-point operator $q(\cdot)$,  and $m\geq0$ with $m\in\mathbb{N}$
			\FOR {$k=0,1,\cdots$ until convergence }
			\STATE compute 
			\begin{equation*} 
				u_{k+1} = \sum_{i=0}^{m_k}\alpha_{k+1-i}^{(k)} q(u_{k-i}),
			\end{equation*}
			where $m_k=\min\{k,m\}$ and $\{\alpha_{k+1-i}^{(k)}\}_{i=0}^{m_k}$ are obtained by solving the  least-squares problem
			\begin{equation}\label{eq:min-AAg-cons}
	\min_{\sum_{i=k-m_k+1}^{k+1}\alpha_i^{(k)}=1} \left\|\alpha_{k-m_k+1}^{(k)}g(q(u_{k-m_k}))+ \alpha_{k-m_k+2}^{(k)} g(q(u_{k-m_k+1}))\cdots + \alpha_{k+1}^{(k)} g(q(u_k)) \right\|^2_{V'}.
			\end{equation}
			\ENDFOR
        \STATE  \textbf{output:} $u_{k+1}$
		\end{algorithmic}
	\end{algorithm}

For notational simplicity, it is very helpful in our analysis to denote $\tilde u_{k+1}:=q(u_k)$ and $e_k=u_k - u_{k-1}$.  It is also useful in our analysis to define the Picard fixed-point residual by
\[
w_{k+1}  = q(u_k) - u_k = \tilde u_{k+1} - u_k.
\]
The following bounds were proven in \cite{HR26} regarding the Picard fixed point residual and the NSE nonlinear residual:
\begin{align}
    \| \nabla w_{k+1} \| & \le \nu^{-1} \| g(u_k) \|_{V'}, \label{wg1} \\
    \| g(\tilde u_{k+1}) \|_{V'}  & \le \kappa \| g( u_{k}) \|_{V'}. \label{wg2}
\end{align}

Before proceeding with a convergence analysis, we recall an important result: the lemma from \cite[Section~3.2]{HR26}. 
This lemma relates the $V$ norm of the difference of iterates for any extrapolation method where $q$ is the Picard iteration and $g$ is defined by \eqref{gdefn}.  In \cite{HR26} it is stated that this result holds for the difference in NGMRES iterates.  However, it actually holds for any two functions in $V$, with the exact same proof (assuming $q$ and $g$ are defined as above).  We give the proof in the Appendix for completeness.

\begin{lemma}\label{elem} 
Provided $\kappa<1$, the difference between general functions $\hat u,\hat v\in V$ is bounded by
\begin{equation}
\| \nabla (\hat u-\hat v) \| \le \frac{\nu^{-1}}{1-\kappa} ( \| g(\hat u)\|_{V'} + \| g(\hat v)\|_{V'}). \label{ebound1}
\end{equation}
\end{lemma}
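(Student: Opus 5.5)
We want $\|\nabla(\hat u-\hat v)\| \le \frac{\nu^{-1}}{1-\kappa}(\|g(\hat u)\|_{V'}+\|g(\hat v)\|_{V'})$ for arbitrary $\hat u,\hat v\in V$. The plan is to route the difference through the Picard images $q(\hat u)$ and $q(\hat v)$. We split
\[
\hat u-\hat v = (\hat u - q(\hat u)) + (q(\hat u)-q(\hat v)) + (q(\hat v)-\hat v),
\]
and bound each piece.

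\textbf{Outer pieces.} By \eqref{wg1}, applied with $u_k$ replaced by $\hat u$ (the bound there holds for any argument in $V$), we have $\|\nabla(q(\hat u)-\hat u)\|\le \nu^{-1}\|g(\hat u)\|_{V'}$, and likewise for $\hat v$. If one wants to see \eqref{wg1} directly, subtract \eqref{NSEPic} at $\hat u$ from \eqref{gdefn} and set $w=q(\hat u)-\hat u$. This gives
\[
\nu\|\nabla w\|^2 + (\hat u\cdot\nabla w, w) = -(g(\hat u),w).
\]
The convective term vanishes by skew-symmetry, so $\nu\|\nabla w\|\le\|g(\hat u)\|_{V'}$.

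\textbf{Middle piece.} This is where $\kappa$ enters. Subtract the Picard equations for $q(\hat u)$ and $q(\hat v)$ and set $\delta=q(\hat u)-q(\hat v)$. Then
\[
\nu(\nabla\delta,\nabla\chi) + (\hat u\cdot\nabla\delta,\chi) + ((\hat u-\hat v)\cdot\nabla q(\hat v),\chi)=0 .
\]
Take $\chi=\delta$; the second term drops. Then \eqref{bbounds} and \eqref{Picbound} give
\[
\|\nabla\delta\|\le M\nu^{-1}\|\nabla(\hat u-\hat v)\|\,\nu^{-1}\|f\|_{-1}=\kappa\|\nabla(\hat u-\hat v)\|.
\]

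\textbf{Combining.} Apply the triangle inequality to the split:
\[
\|\nabla(\hat u-\hat v)\|\le \nu^{-1}\|g(\hat u)\|_{V'}+\nu^{-1}\|g(\hat v)\|_{V'}+\kappa\|\nabla(\hat u-\hat v)\|.
\]
Since $\kappa<1$, absorb the last term into the left side and divide by $1-\kappa$. This yields \eqref{ebound1}.

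The only delicate point is the middle piece. The choice of which argument multiplies $\nabla\delta$ must make the skew term vanish and leave the remainder involving $q(\hat v)$, whose gradient is uniformly bounded by \eqref{Picbound}. The other splitting, with $\hat v\cdot\nabla\delta$ and $(\hat u-\hat v)\cdot\nabla q(\hat u)$, works equally well. No bound on $\hat u$ or $\hat v$ themselves is needed. This is why the result holds for general functions in $V$, not only for iterates of the method.
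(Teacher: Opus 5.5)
Your proof is correct and gives exactly the paper's constant, but it uses a slightly different decomposition. The paper splits only through $q(\hat v)$, writing $\hat u-\hat v=(\hat u-q(\hat v))+(q(\hat v)-\hat v)$. It applies \eqref{wg1} once (to $\hat v$) and then handles $\hat u-q(\hat v)$ with a single combined energy estimate: it subtracts the Picard equation at $\hat v$ from the definition of $g(\hat u)$ and tests with $\hat u-q(\hat v)$, which gives $\|\nabla(\hat u-q(\hat v))\|\le\kappa\|\nabla(\hat u-\hat v)\|+\nu^{-1}\|g(\hat u)\|_{V'}$ in one step. You also pass through $q(\hat u)$, so that combined estimate is replaced by two separate facts. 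The first is \eqref{wg1} applied at $\hat u$. The second is the $\kappa$-Lipschitz continuity of the Picard map, which is precisely the computation behind \eqref{etbound}, now done for arbitrary arguments. Applying the triangle inequality to your two bounds reproduces the paper's intermediate inequality, so nothing is lost. Your route makes the structure visible: the lemma is the standard a posteriori bound $\|\nabla(\hat u-\hat v)\|\le(1-\kappa)^{-1}\big(\|\nabla(q(\hat u)-\hat u)\|+\|\nabla(q(\hat v)-\hat v)\|\big)$ for a $\kappa$-contraction, composed with the residual bound \eqref{wg1}. It therefore transfers unchanged to any fixed-point solver with those two properties. The paper's route is a bit more economical, needing one fewer auxiliary function and one fewer use of \eqref{wg1}, at the cost of mixing the two mechanisms in one estimate. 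Both rely on the same ingredients: skew-symmetry with a divergence-free advecting field, \eqref{bbounds}, \eqref{Picbound}, and absorption using $\kappa<1$.
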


It will also be useful to define the quantity $\tilde e_k=\tilde u_k - \tilde u_{k-1}$.  Since for all $v\in V$,
\begin{align*}
\nu(\nabla \tilde u_k,\nabla v) + (u_{k-1}\cdot\nabla \tilde u_k,v) = (f,v),\\
\nu(\nabla \tilde u_{k-1},\nabla v) + (u_{k-2}\cdot\nabla \tilde u_{k-1},v) = (f,v),
\end{align*}
we have that
\[
\nu (\nabla \tilde e_k,\nabla v) + (e_{k-1}\cdot\nabla \tilde u_k,v) + (u_{k-2}\cdot\nabla \tilde e_k,v)=0.
\]
Hence with $v=e_k$, using \eqref{bbounds}, \eqref{Picbound} and the definition of $\kappa$ we get that
\begin{equation}
\| \nabla \tilde e_k \| \le M \nu^{-1} \| \nabla e_{k-1} \| \| \nabla \tilde u_k \| \implies \| \nabla \tilde e_k \| \le \kappa \| \nabla e_{k-1} \|. \label{etbound}
\end{equation}
This will be useful bound below.

We now prove convergence estimates for AAg. Under the assumption that $\kappa<1$ so that the Picard iteration is contractive, we prove that AAg accelerates convergence by scaling the linear convergence rate by the gain of the optimization problem, which is defined by
\begin{align*}
\theta_{k+1} &=  \frac{ \| \sum_{j=k-m_k+1}^{k+1}  \alpha_j g(\tilde u_j) \|_{V'} } { \| g(\tilde u_{k+1}) \|_{V'}  }.
\end{align*}
The numerator comes from the optimization problem, and the denominator comes from not using acceleration but just the Picard iteration, which is equivalent to picking the parameters $[1,0,...,0]$.  Since $[1,0,...,0]$ is in the admissible set of parameters, we have that $\theta\le 1$, but $\theta=1$ is very unlikely.

Further, we give an estimate of the linear convergence rate of $\| g(u_k) \|_{V'}$ at each iteration:
\begin{align*}
\gamma_{k+1} &=  \frac{ \| \sum_{j=k-m_k+1}^{k+1}  \alpha_j g(\tilde u_j) \|_{V'} } { \| g(u_{k}) \|_{V'}  }.
\end{align*}
Our numerical tests show $\gamma_{k+1}$ gives a very sharp estimate of the contraction ratio at each iteration, once the residual is moderately reduced so the higher order terms become negligible.  We also show in our numerical tests that $\gamma_{k+1}$ can be used to adaptively choose the depth at each step of AAg.  

The AAg convergence theory for general $m$ is long and somewhat tedious, and so we first prove the result for $m=1$ so that reader can have some intuition of the proof structure before we prove the general $m$ result.
 
\subsection{AAg convergence for $m=1$}
We begin by establishing two identities for the $m=1$ case. Recall that
$$u_{k+1}=\alpha_{k+1}\tilde u_{k+1} +\alpha_k \tilde u_k,$$
where $\alpha_{k+1}+\alpha_k=1$. First, expanding $u_{k+1}-\tilde u_{k+1}$ and using the definition of $u_{k+1}$ yields 
\begin{align}
u_{k+1}-\tilde u_{k+1} & =-(1-\alpha_{k+1}) \tilde u_{k+1} + \alpha_k \tilde u_k = -\alpha_k \tilde e_{k+1}. \label{I1}
\end{align}
Similarly, expanding $u_{k+1}-\tilde u_{k}$ gives
\begin{align}
u_{k+1}-\tilde u_{k} & = \alpha_{k+1} \tilde u_{k+1} + (\alpha_k-1) \tilde u_k = \alpha_{k+1} \tilde e_{k+1}. \label{I2}
\end{align}

\begin{theorem} Consider AAg with $m=1$ presented in Algorithm \ref{alg:AAg-cons}. Assume that $\max_k|\alpha_k|\leq \bar{\alpha}$ for some positive $\bar{\alpha}$. 
If $\kappa<1$, then for $k>1$,
\begin{align*}
\| g(u_{k+1})\|_{V'} &\le \theta_{k+1} \kappa \| g(u_k)  \|_{V'}
+ \frac{2\bar\alpha^2 \kappa^2}{\nu^2(1-\kappa)^2} \left( \| g(u_k) \|_{V'}^2 + \| g(u_{k-1}) \|_{V'}^2  \right), \\
\| g(u_{k+1})\|_{V'} &\le \gamma_{k+1} \|g(u_{k})  \|_{V'}
+ \frac{2\bar\alpha^2 \kappa^2}{\nu^2(1-\kappa)^2} \left( \| g(u_k) \|_{V'}^2 + \| g(u_{k-1}) \|_{V'}^2  \right).
\end{align*}
\end{theorem}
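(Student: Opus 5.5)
We will expand $g(u_{k+1})$ around the two Picard iterates $\tilde u_{k+1}$ and $\tilde u_k$, and use the constraint $\alpha_{k+1}+\alpha_k=1$ to write
\[
g(u_{k+1}) = \alpha_{k+1} g(u_{k+1}) + \alpha_k g(u_{k+1}).
\]
For any $v,w\in V$ and $\chi\in V$, the definition \eqref{gdefn} gives the exact expansion
\[
(g(v)-g(w),\chi) = \nu(\nabla(v-w),\nabla\chi) + (w\cdot\nabla(v-w),\chi) + ((v-w)\cdot\nabla w,\chi) + ((v-w)\cdot\nabla(v-w),\chi).
\]
It is linear in $v-w$ plus one quadratic term. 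We apply this with $v=u_{k+1}$, first with $w=\tilde u_{k+1}$ (weighted by $\alpha_{k+1}$) and then with $w=\tilde u_k$ (weighted by $\alpha_k$). By \eqref{I1} and \eqref{I2}, $u_{k+1}-\tilde u_{k+1}=-\alpha_k\tilde e_{k+1}$ and $u_{k+1}-\tilde u_k=\alpha_{k+1}\tilde e_{k+1}$.

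The key step is that the linear parts cancel in the weighted sum. Consider the $\nu$-terms: $\alpha_{k+1}(-\alpha_k)\tilde e_{k+1}+\alpha_k\alpha_{k+1}\tilde e_{k+1}=0$. For the convective linear terms, write $\tilde u_k=\tilde u_{k+1}-\tilde e_{k+1}$. The two linear convective pieces then cancel up to terms of the form $\alpha_k\alpha_{k+1}(\tilde e_{k+1}\cdot\nabla\tilde e_{k+1},\chi)$ with bounded coefficients. Collecting everything, we expect an identity
\[
g(u_{k+1}) = \alpha_{k+1}g(\tilde u_{k+1})+\alpha_k g(\tilde u_k) + R,\qquad (R,\chi)=c_k\,(\tilde e_{k+1}\cdot\nabla\tilde e_{k+1},\chi),
\]
where $c_k$ is a polynomial in $\alpha_k,\alpha_{k+1}$. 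Using the constraint we hope to reduce it to something like $-\alpha_k\alpha_{k+1}$ or a combination bounded by $2\bar\alpha^2$. Carefully tracking this coefficient is the main bookkeeping obstacle. I would do the algebra directly, checking that all nonquadratic terms vanish, and bound $|c_k|\le 2\bar\alpha^2$ crudely if an exact form is messy.

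Next we bound the remainder. By \eqref{bbounds2}, $\|R\|_{V'}\le |c_k| M\|\nabla\tilde e_{k+1}\|^2$. By \eqref{etbound}, $\|\nabla\tilde e_{k+1}\|\le\kappa\|\nabla e_k\|$, where $e_k=u_k-u_{k-1}$. Lemma \ref{elem} applied to $\hat u=u_k$, $\hat v=u_{k-1}$ gives $\|\nabla e_k\|\le\frac{\nu^{-1}}{1-\kappa}(\|g(u_k)\|_{V'}+\|g(u_{k-1})\|_{V'})$. Squaring and using $(a+b)^2\le2(a^2+b^2)$ gives the quadratic term
\[
\frac{\kappa^2\nu^{-2}}{(1-\kappa)^2}\bigl(\|g(u_k)\|^2_{V'}+\|g(u_{k-1})\|^2_{V'}\bigr)
\]
times a constant involving $M$ and $\bar\alpha^2$. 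To match the stated constant, the factor $M$ must be absorbed. One option is to bound one factor of $\|\nabla\tilde e_{k+1}\|$ differently, e.g.\ via $\|\nabla\tilde u\|\le\nu^{-1}\|f\|_{-1}$ and $\kappa=M\nu^{-2}\|f\|_{-1}$. The other is to accept a constant in the stated form by keeping the convective term as $\tilde e_{k+1}\cdot\nabla\tilde e_{k+1}$ and using the bound in \eqref{etbound} with $M\nu^{-1}\|\nabla\tilde u\|$. Reconciling the exact constant $2\bar\alpha^2\kappa^2/(\nu^2(1-\kappa)^2)$ is the step I expect to need the most care.

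Finally, the leading term is handled by definition. We have $\|\alpha_{k+1}g(\tilde u_{k+1})+\alpha_kg(\tilde u_k)\|_{V'}=\theta_{k+1}\|g(\tilde u_{k+1})\|_{V'}$, and \eqref{wg2} gives $\|g(\tilde u_{k+1})\|_{V'}\le\kappa\|g(u_k)\|_{V'}$, which yields the first estimate. Writing the same norm as $\gamma_{k+1}\|g(u_k)\|_{V'}$ yields the second. The restriction $k>1$ ensures $u_{k-1},u_{k-2}$ exist for \eqref{etbound}.
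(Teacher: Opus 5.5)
Your route is essentially the paper's. You derive the same identity $g(u_{k+1})=\alpha_{k+1}g(\tilde u_{k+1})+\alpha_k g(\tilde u_k)+c_k\,\tilde e_{k+1}\cdot\nabla\tilde e_{k+1}$ and then use the same chain: \eqref{bbounds2}, \eqref{etbound}, Lemma~\ref{elem}, $(a+b)^2\le 2(a^2+b^2)$, and \eqref{wg2}.

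The coefficient you left open does close:
\begin{itemize}
\item the linear convective pieces sum to $\alpha_k\alpha_{k+1}\big((\tilde u_k-\tilde u_{k+1})\cdot\nabla\tilde e_{k+1}+\tilde e_{k+1}\cdot\nabla(\tilde u_k-\tilde u_{k+1})\big)=-2\alpha_k\alpha_{k+1}\tilde e_{k+1}\cdot\nabla\tilde e_{k+1}$;
\item the quadratic pieces sum to $\alpha_k\alpha_{k+1}(\alpha_k+\alpha_{k+1})\tilde e_{k+1}\cdot\nabla\tilde e_{k+1}$.
\end{itemize}
Hence $c_k=-\alpha_k\alpha_{k+1}$ and $|c_k|\le\bar\alpha^2$. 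The paper gets there more directly by splitting only one slot, $u_{k+1}\cdot\nabla u_{k+1}=\alpha_{k+1}\tilde u_{k+1}\cdot\nabla u_{k+1}+\alpha_k\tilde u_k\cdot\nabla u_{k+1}$. Each Picard iterate then contributes a single term $\tilde u_i\cdot\nabla(u_{k+1}-\tilde u_i)$, and no cancellation is needed.

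The step you flagged, absorbing $M$, cannot be done, and neither of your options does it.
\begin{itemize}
\item Bounding $M\|\nabla\tilde e_{k+1}\|\le M(\|\nabla\tilde u_{k+1}\|+\|\nabla\tilde u_k\|)\le 2\kappa\nu$ leaves a remainder linear in $\|\nabla e_k\|$. That is the same order as the leading term, which defeats the purpose of the estimate.
\item The $M$ inside \eqref{etbound} is already spent producing $\kappa$, so it cannot cancel the separate $M$ coming from \eqref{bbounds2}.
\end{itemize}

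The factor is genuinely there. $M$ carries units of length$^{2-d/2}$, and without it the quadratic term is not dimensionally consistent with $\|g(u_{k+1})\|_{V'}$. The paper's proof simply drops it when bounding $\|\alpha_k\alpha_{k+1}\tilde e_{k+1}\cdot\nabla\tilde e_{k+1}\|_{V'}$ by $\bar\alpha^2\|\nabla\tilde e_{k+1}\|^2$.

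So the correct conclusion of your argument, and of the paper's, is the stated pair of estimates with constant $2M\bar\alpha^2\kappa^2/(\nu^2(1-\kappa)^2)$. This matches Theorem~\ref{thmm}, which does carry $M$. You should state that constant rather than try to reproduce the printed one.
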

\begin{remark}
Since $\kappa$ is the linear convergence rate of Picard iteration, the quantity $\theta_{k+1}$ represents the convergence acceleration provided by AAg.  Since in practice $\kappa$ is hard to know precisely, the quantity $\gamma_{k+1}$ represents a sharper estimate of the linear convergence rate at each iteration. Note that it is consistent with \eqref{eq:NGMRES-expansion} that the $(k+1)$st residual is bounded by the linear part of $k$th residual and high-order part of $k$th and $(k-1)$st residuals.
\end{remark}

\begin{proof}
To reduce on notation, the following two equality strings will be understood to hold in the weak sense when tested with an arbitrary function from $V$.

We begin the proof by developing an identity for $g(u_{k+1})$ through expanding $u_{k+1}$ using its definition and that $\alpha_k + \alpha_{k+1}=1$ to obtain
\begin{align*}
g(u_{k+1}) &= u_{k+1} \cdot\nabla u_{k+1} - \nu\Delta u_{k+1} - f \\ 
&= \alpha_{k+1} \left( \tilde u_{k+1} \cdot\nabla u_{k+1} - \nu\Delta \tilde u_{k+1} - f\right) +
\alpha_k \left( \tilde u_{k} \cdot\nabla u_{k+1} - \nu\Delta \tilde u_k - f\right)  \\ 
& = \alpha_{k+1} \left( g(\tilde u_{k+1}) + \tilde u_{k+1}\cdot\nabla (u_{k+1}-\tilde u_{k+1} ) \right)
+ \alpha_{k} \left( g(\tilde u_{k}) + \tilde u_{k}\cdot\nabla (u_{k+1}-\tilde u_{k}) \right).
\end{align*}
Next, regrouping terms and using \eqref{I1}-\eqref{I2}, we get that
\begin{align*}
g(u_{k+1}) &= \left( \alpha_{k+1} g(\tilde u_{k+1}) + \alpha_{k} g(\tilde u_{k}) \right)
+  \alpha_{k+1} \tilde u_{k+1}\cdot\nabla (u_{k+1}-\tilde u_{k+1} )
+ \alpha_{k}  \tilde u_{k}\cdot\nabla (u_{k+1}-\tilde u_{k}) \\
&= \left( \alpha_{k+1} g(\tilde u_{k+1}) + \alpha_{k} g(\tilde u_{k}) \right)
- \alpha_{k+1}\alpha_k \tilde u_{k+1}\cdot\nabla \tilde e_{k+1} + \alpha_k \alpha_{k+1} \tilde u_{k}\cdot\nabla \tilde e_{k+1} \\
&= \left( \alpha_{k+1} g(\tilde u_{k+1}) + \alpha_{k} g(\tilde u_{k}) \right)
- \alpha_{k}\alpha_{k+1} \tilde e_{k+1}\cdot\nabla \tilde e_{k+1} .
\end{align*}
Taking $V'$ norms of both sides, using the triangle inequality, the bound on $\alpha_j$'s and \eqref{etbound}  gives 
\begin{align*}
\| g(u_{k+1})\|_{V'} 
&\le
\| \alpha_{k+1} g(\tilde u_{k+1}) + \alpha_{k} g(\tilde u_{k})  \|_{V'}
+ \bar\alpha^2 \| \nabla \tilde e_{k+1} \|^2\\
&\le
\| \alpha_{k+1} g(\tilde u_{k+1}) + \alpha_{k} g(\tilde u_{k})  \|_{V'}
+ \bar\alpha^2 \kappa^2 \| \nabla e_{k} \|^2.
\end{align*}
Finally, using \eqref{ebound1}, we obtain the estimate
\begin{align*}
\| g(u_{k+1})\|_{V'} 
&\le
\| \alpha_{k+1} g(\tilde u_{k+1}) + \alpha_{k} g(\tilde u_{k})  \|_{V'}
+ \frac{\bar\alpha^2 \kappa^2}{\nu^2(1-\kappa)^2} \left( \| g(u_k) \|_{V'} + \| g(u_{k-1}) \|_{V'}  \right)^2.
\end{align*}
From here, inserting the definitions of $\theta_{k+1}$ and $\gamma_{k+1}$ for the lower order term along with Young's inequality for the higher order terms gives the two estimates
\begin{align*}
\| g(u_{k+1})\|_{V'} 
&\le
\theta_{k+1} \| g(\tilde u_{k+1})   \|_{V'}
+ \frac{\bar\alpha^2 \kappa^2}{\nu^2(1-\kappa)^2} \left( \| g(u_k) \|_{V'} + \| g(u_{k-1}) \|_{V'}  \right)^2, \\
\| g(u_{k+1})\|_{V'} 
&\le
\gamma_{k+1} \|g( u_{k})  \|_{V'}
+ \frac{\bar\alpha^2 \kappa^2}{\nu^2(1-\kappa)^2} \left( \| g(u_k) \|_{V'} + \| g(u_{k-1}) \|_{V'}  \right)^2.
\end{align*}
Lastly, using \eqref{wg2}
finishes the proof.
\end{proof}

\subsection{AAg convergence for general $m$}

We now prove a convergence estimate for AAg in the case of general $m$.  The strategy is the same as for the $m=1$ case: first we write $g(u_{k+1})$ in terms of $\sum_{j=k-m+1}^{k+1} \alpha_j g(\tilde u_j)$, and then bound the remaining terms.  We first establish a lemma useful for the main theorem to follow.

\begin{lemma}\label{ilemma}
Consider AAg with $m>1$ presented in Algorithm \ref{alg:AAg-cons}.  For $k>m$, the following identities hold:
\begin{align*}
u_{k+1} - \tilde u_{k+1} & = -(1-\alpha_{k+1}) \tilde e_{k+1} - (1-\alpha_{k+1}-\alpha_k) \tilde e_{k} - ...- \alpha_{k-m+1} \tilde e_{k-m+2}, \\
u_{k+1} - \tilde u_{k} &=  \alpha_{k+1} \tilde e_{k+1} - (1-\alpha_{k+1} - \alpha_k) \tilde e_k - (1-\alpha_{k+1} - \alpha_k - \alpha_{k-1}) \tilde e_{k-1} \\
& \ \ \ \ -  ...-  \alpha_{k-m+1} \tilde e_{k-m+2}, \\
u_{k+1} - \tilde u_{k-1} & = \alpha_{k+1} \tilde e_{k+1} +  (\alpha_{k+1} + \alpha_k) \tilde e_k - (1-\alpha_{k+1}-\alpha_{k}- \alpha_{k-1}) \tilde e_{k-1} \\
& \ \ \ \ - ...- \alpha_{k-m+1} \tilde e_{k-m+2}, \\
\mbox{for $j<k-1$:} \ \ \ u_{k+1} - \tilde u_j &=\alpha_{k+1} \tilde e_{k+1} +  (\alpha_{k+1} + \alpha_k) \tilde e_k + ... +  (\alpha_{k+1} + \alpha_k + ... +\alpha_{j+1}) \tilde e_{j+1} \\
& \ \ \ \ (1 - \alpha_{k+1} - ... - \alpha_j)\tilde e_j - ... - \alpha_{k-m+1}\tilde e_{k-m+2}.
\end{align*}
\end{lemma}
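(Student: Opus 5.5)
The identities are purely algebraic, so the plan is to use only the definition $u_{k+1}=\sum_{i=k-m+1}^{k+1}\alpha_i \tilde u_i$ (with $m_k=m$ since $k>m$), the constraint $\sum_{i=k-m+1}^{k+1}\alpha_i=1$, and the telescoping relation $\tilde u_i-\tilde u_j=\sum_{l=j+1}^{i}\tilde e_l$ for $i>j$. No analytic estimates are needed.

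\textbf{Key step.} For a fixed index $j\in\{k-m+1,\dots,k+1\}$, the constraint lets us write $\tilde u_j=\sum_i \alpha_i \tilde u_j$, so
\[
u_{k+1}-\tilde u_j=\sum_{i>j}\alpha_i(\tilde u_i-\tilde u_j)-\sum_{i<j}\alpha_i(\tilde u_j-\tilde u_i).
\]
We then expand each difference by telescoping and swap the order of summation, collecting the coefficient of each $\tilde e_l$.
\begin{itemize}
\item For $l>j$, $\tilde e_l$ appears in $\tilde u_i-\tilde u_j$ exactly when $i\ge l$. Its coefficient is therefore $\alpha_{k+1}+\dots+\alpha_l$.
\item For $l\le j$, with $l\ge k-m+2$, $\tilde e_l$ appears in $\tilde u_j-\tilde u_i$ exactly when $i\le l-1$. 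Its coefficient is therefore $-(\alpha_{k-m+1}+\dots+\alpha_{l-1})$.
\end{itemize}
Using the constraint once more, $\alpha_{k-m+1}+\dots+\alpha_{l-1}=1-\alpha_{k+1}-\dots-\alpha_l$. This produces exactly the form stated in the lemma: the terms before index $j$ carry plus signs with partial sums from the top, and the terms from $j$ down carry the factor $-(1-\alpha_{k+1}-\cdots-\alpha_l)$. The last of these, $l=k-m+2$, has coefficient $-\alpha_{k-m+1}$.

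\textbf{Specializations.} Setting $j=k+1$ gives the first identity, $j=k$ the second, $j=k-1$ the third, and general $j<k-1$ the fourth. This mirrors the derivations of \eqref{I1}--\eqref{I2} for $m=1$, which are the special case $m=1$, $j\in\{k+1,k\}$ and serve as a sanity check.

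\textbf{Main obstacle.} The only real difficulty is index bookkeeping: getting the summation interchange boundaries right, and checking the transition term at $l=j$. For $l=j$, $\tilde e_j$ lies in $\tilde u_j-\tilde u_i$ for every $i<j$, so its coefficient is $-(1-\alpha_{k+1}-\dots-\alpha_j)$, which must match the stated formula. I would check the $m=2$ case explicitly to confirm the signs before writing the general computation.
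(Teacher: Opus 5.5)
Your proposal is correct and follows essentially the paper's argument: both use only the definition of $u_{k+1}$, the constraint $\sum_i\alpha_i=1$, and telescoping in the $\tilde e_l$. The paper peels off one $\tilde e_l$ at a time starting from the top index (summation by parts), while you interchange the double sum to get every coefficient at once. Your uniform formula also settles two points the statement leaves unclear. The coefficient of $\tilde e_j$ in the fourth identity should be $-(1-\alpha_{k+1}-\dots-\alpha_j)$; the minus sign is missing in the statement. And for $j=k-m+1$ every coefficient has the form $\alpha_{k+1}+\dots+\alpha_l$ (in particular $\tilde e_{k-m+2}$ has coefficient $1-\alpha_{k-m+1}$), so the trailing term $-\alpha_{k-m+1}\tilde e_{k-m+2}$ is valid only for $j\ge k-m+2$.
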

\begin{proof}
This proof follows from identities, and is given in the Appendix.
\end{proof}

The general $m$ AAg convergence theorem can now be stated as follows.
\begin{theorem}\label{thmm} Consider AAg with $m>1$ presented in Algorithm \ref{alg:AAg-cons}. Assume that $\max_k|\alpha_k|\leq \bar{\alpha}$ for some positive $\bar{\alpha}$.
If $\kappa<1$, then for $k>m$:
$$\| g(u_{k+1}) \|_{V'} \le
\min \{ \theta_{k+1}\kappa,\ \gamma_{k+1} \} \| g(u_{k}) \|_{V'}  + 4M (m+1) m^3 \bar \alpha^2 \frac{\kappa^2}{\nu^{2} (1-\kappa)^2} \sum_{i=0}^m \| g(u_{k-i}) \|_{V'}^2.$$
Here $M$ satisfies \eqref{bbounds2}.
\end{theorem}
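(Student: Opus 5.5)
We follow the $m=1$ template: write $g(u_{k+1})$ as the optimized combination $\sum_{j=k-m+1}^{k+1}\alpha_j g(\tilde u_j)$ plus a remainder that is quadratic in the differences $\tilde e_j$, and then bound that remainder with \eqref{etbound} and Lemma \ref{elem}. Since $u_{k+1}=\sum_j \alpha_j\tilde u_j$ with $\sum_j\alpha_j=1$, in the weak sense we have
\begin{align*}
g(u_{k+1}) &= \sum_{j=k-m+1}^{k+1}\alpha_j\left( \tilde u_j\cdot\nabla u_{k+1} - \nu\Delta \tilde u_j - f\right) \\
&= \sum_{j=k-m+1}^{k+1}\alpha_j g(\tilde u_j) + \sum_{j=k-m+1}^{k+1}\alpha_j\, \tilde u_j\cdot\nabla(u_{k+1}-\tilde u_j).
\end{align*}
The first sum is handled exactly as for $m=1$. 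Its $V'$ norm equals $\theta_{k+1}\|g(\tilde u_{k+1})\|_{V'}\le \theta_{k+1}\kappa\|g(u_k)\|_{V'}$ by \eqref{wg2}, and it also equals $\gamma_{k+1}\|g(u_k)\|_{V'}$ by definition. This gives the factor $\min\{\theta_{k+1}\kappa,\gamma_{k+1}\}$.

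The key step is to show that the second sum is quadratic. Lemma \ref{ilemma} writes each $u_{k+1}-\tilde u_j$ as $\sum_{l=k-m+2}^{k+1} c_{j,l}\tilde e_l$, where every coefficient $c_{j,l}$ is a partial sum of the $\alpha$'s (or one minus such a sum). Since $\sum_j\alpha_j=1$, each $c_{j,l}$ can be rewritten as $\pm$ a partial sum of at most $m$ of the $\alpha$'s, so $|c_{j,l}|\le m\bar\alpha$. The remainder is then $\sum_l\big(\sum_j \alpha_j c_{j,l}\tilde u_j\big)\cdot\nabla\tilde e_l$. For each fixed $l$, the weights satisfy $\sum_j\alpha_j c_{j,l}=0$. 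This follows because $\sum_j\alpha_j(u_{k+1}-\tilde u_j)=u_{k+1}-\sum_j\alpha_j\tilde u_j=0$, and it can be checked directly from the explicit coefficients in Lemma \ref{ilemma}, which is the generalization of the cancellation $-\alpha_{k+1}\alpha_k+\alpha_k\alpha_{k+1}=0$ in the $m=1$ case. Because the weights sum to zero, we may subtract $\tilde u_{k+1}$ from every $\tilde u_j$, so that $\sum_j \alpha_j c_{j,l}\tilde u_j=\sum_j\alpha_j c_{j,l}(\tilde u_j-\tilde u_{k+1})$. Each difference $\tilde u_j-\tilde u_{k+1}$ is a telescoping sum of at most $m$ of the $\tilde e_i$. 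The remainder therefore becomes a sum of products $\tilde e_i\cdot\nabla\tilde e_l$, where each coefficient is bounded by $m\bar\alpha^2$ (or a comparable power of $m$ times $\bar\alpha^2$).

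Next we take $V'$ norms and use \eqref{bbounds2} to get $\|\tilde e_i\cdot\nabla\tilde e_l\|_{V'}\le M\|\nabla\tilde e_i\|\|\nabla\tilde e_l\|\le \tfrac{M}{2}(\|\nabla\tilde e_i\|^2+\|\nabla\tilde e_l\|^2)$. Counting terms, there are $m+1$ indices $j$, $m$ indices $l$, and at most $m$ telescoping terms, with coefficients of size $m\bar\alpha^2$. This gives a prefactor of order $(m+1)m^3\bar\alpha^2 M$ multiplying $\max_i\|\nabla\tilde e_i\|^2$, or a sum of these squares. We then apply \eqref{etbound}, $\|\nabla\tilde e_i\|\le\kappa\|\nabla e_{i-1}\|$, and Lemma \ref{elem}, $\|\nabla e_{i-1}\|\le \frac{\nu^{-1}}{1-\kappa}(\|g(u_{i-1})\|_{V'}+\|g(u_{i-2})\|_{V'})$. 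Squaring and using Young's inequality produces the factor $2$, and summing over the window gives $\sum_{i=0}^m\|g(u_{k-i})\|_{V'}^2$ with the constant $4M(m+1)m^3\bar\alpha^2\kappa^2\nu^{-2}(1-\kappa)^{-2}$.

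The main obstacle is the second step: verifying the zero-sum cancellation from the explicit coefficients of Lemma \ref{ilemma}, and organizing the double sum so that the combinatorial count matches $(m+1)m^3$. If the cancellation proves messy, the fallback is the direct identity $\sum_j\alpha_j(u_{k+1}-\tilde u_j)=0$. This gives $\sum_j\alpha_j\tilde u_j\cdot\nabla(u_{k+1}-\tilde u_j)=\sum_j\alpha_j(\tilde u_j-\tilde u_{k+1})\cdot\nabla(u_{k+1}-\tilde u_j)$, and each factor in this expression can be bounded separately by a sum of $\|\nabla \tilde e_l\|$. This route avoids tracking individual coefficients and yields the same order of constant.
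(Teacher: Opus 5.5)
Your proposal is correct and follows essentially the paper's route. You expand $g(u_{k+1})$ using $\sum_j\alpha_j=1$ to isolate $\sum_j\alpha_j g(\tilde u_j)$. You then use Lemma \ref{ilemma} with the cancellation from $\sum_j\alpha_j(u_{k+1}-\tilde u_j)=0$ to reduce the remainder to terms $\tilde e_i\cdot\nabla\tilde e_l$, which you bound with \eqref{bbounds2}, \eqref{etbound} and Lemma \ref{elem}.

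The only difference is cosmetic. You linearize in the advecting slot, as in the paper's $m=1$ proof, which gives the remainder $\sum_j\alpha_j\tilde u_j\cdot\nabla(u_{k+1}-\tilde u_j)$. The paper's general-$m$ proof instead keeps $u_{k+1}$ as the advecting field and regroups $\sum_j\alpha_j(u_{k+1}-\tilde u_j)\cdot\nabla\tilde u_j$ into $\sum_{j}\sum_{i\ge j}\alpha_i\tilde e_j\cdot\nabla(u_{k+1}-\tilde u_i)$ before applying Lemma \ref{ilemma} again. Your count, including the fallback form, gives a constant no larger than the stated $4M(m+1)m^3$.
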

Before presenting the proof, we make the following two remarks to better understand Theorem \ref{thmm}.
\begin{remark} For $1<k\leq m$, the result in Theorem \ref{thmm} holds after replacing $m$ with $m_k=\min\{m,k\}$ on the right-hand side on the above inequality.
\end{remark}
\begin{remark}
Since $\kappa$ is the linear convergence rate of Picard, the quantity $\theta_{k+1}$ represents the convergence acceleration provided by AAg.  Since in practice $\kappa$ is hard to know precisely, the quantity $\gamma_{k+1}$ represents a sharper estimate of the linear convergence rate at each iteration.  We show in our numerical tests that $\gamma_{k+1}$ is a sharp estimate of the linear convergence rate, and indeed sharp enough that it can be used to develop strategies for adaptive $m$ at each iteration. 
\end{remark}

\begin{proof}
Note that for $k>m$, $m_k=\min\{m,k\}=m$. We establish using a long string of identities in \eqref{Rexpansion0} and \eqref{Rexpansion} in the Appendix that 
\begin{align}
g(u_{k+1}) & = 
\alpha_{k+1} g(\tilde u_{k+1}) + \alpha_{k} g(\tilde u_{k}) + ...  
+ \alpha_{k-m+1} g( \tilde u_{k-m+1} ) 
 +  \sum_{j=k-m+2}^{k+1} \sum_{i=j}^{k+1} \alpha_{i}\tilde e_j \cdot\nabla (u_{k+1} - \tilde u_{i}), \label{III1}
 \end{align}
 where equality is meant in the weak sense, i.e. equality holds when the equation is tested with an arbitrary function from $V$.  Using Lemma \ref{ilemma}, up to coefficients all higher order terms take the form $\tilde e_j \cdot \nabla \tilde e_i$.  Thus using \eqref{bbounds2}, \eqref{etbound} and \eqref{ebound1}, we have that
\begin{align}
 \sum_{j=k-m+2}^{k+1} &\sum_{i=j}^{k+1} \alpha_{i}\tilde e_j \cdot\nabla (u_{k+1} - \tilde u_{i}) \nonumber \\
& \le 4M (m+1) m \bar \alpha^2 \sum_{i,j=k-m+2}^{k+1} \| \nabla \tilde e_j \| \| \nabla \tilde e_i \| \nonumber \\
&  \le 4M (m+1) m^3  \frac{\bar \alpha^2\kappa^2}{\nu^{2} (1-\kappa)^2} \sum_{j=k-m}^k \| g(u_j) \|_{V'}^2. \label{gen1}
\end{align}
Now taking $V'$ norms of \eqref{III1} and using \eqref{gen1} provides
\begin{multline*}
\| g(u_{k+1}) \|_{V'}
\le
\| \alpha_{k+1} g(\tilde u_{k+1}) + \alpha_{k} g(\tilde u_{k}) + ...  
+ \alpha_{k-m+1} g( \tilde u_{k-m+1} ) \|_{V'} \\
+4 M (m+1) m^3 \bar \alpha^2 \frac{\kappa^2}{\nu^{2} (1-\kappa)^2} \sum_{j=k-m}^k \| g(u_j) \|_{V'}^2.
\end{multline*}
Now using the definitions of $\theta_{k+1}$ and $\gamma_{k+1}$, we obtain
\begin{align}
\| g(u_{k+1}) \|_{V'}
& \le
\theta_{k+1} \| g(\tilde u_{k+1}) \|_{V'}
 + 4M (m+1) m^3 \bar \alpha^2 \frac{\kappa^2}{\nu^{2} (1-\kappa)^2} 
\bigg(  \| g(u_k) \|_{V'}^2 + \| g(u_{k-1}) \|_{V'}^2 + ... +  \| g(u_{k-m}) \|_{V'}^2 \bigg), \label{gen2}
\end{align}
and
\begin{align*}
\| g(u_{k+1}) \|_{V'}
& \le
\gamma_{k+1} \| g( u_{k}) \|_{V'} + 4M (m+1) m^3 \bar \alpha^2 \frac{\kappa^2}{\nu^{2} (1-\kappa)^2} 
\bigg(  \| g(u_k) \|_{V'}^2 + \| g(u_{k-1}) \|_{V'}^2 + ... +  \| g(u_{k-m}) \|_{V'}^2 \bigg).
\end{align*}
Finally, using \eqref{wg2} in \eqref{gen2} completes the proof.
\end{proof}

\section{Numerical Experiments}\label{sec:num}

We now present results of several numerical tests to illustrate the AAg theory, compare convergence behavior of AAg,  AA and NGMRES, and discuss an adaptive depth strategy for AAg.  The theorems presented in this work use the $V'$ norm, and we compute this norm in our computations using
\[
\| \phi_h \|_{V'}^2 := (\phi_h,A_h^{-1} \phi_h),
\]
where $A_h$ is the discrete Stokes operator with boundary conditions enforced in the matrix (i.e. remove degrees of freedom (dof) associated with Dirichlet velocity boundary conditions).  As shown in \cite{HR26} for NGMRES and \cite{HR25} for AA, using the norm in the optimization problem that is consistent with the theory is important for convergence behavior guaranteed by the theory (an obvious statement for a numerical analyst).  Using $\ell^2$ instead as the optimization norm is more efficient and can often also provide good convergence results sometimes, but can also cause failure \cite{HR25,HR26}; hence it is safest to compute with the norms consistent with the theory.


For the NSE Picard iteration, on the first step we solve directly for $(u_1,p_1)$ but on subsequent steps we solve directly for the updates $\delta_{k+1}^u = u_{k+1} - u_k$ and $\delta_{k+1}^p = p_{k+1} - p_k$, and then recover $u_{k+1}$ and $p_{k+1}$.  While these approaches to the Picard iteration are mathematically equivalent, when using inexact linear solvers it is advantageous to solve for the updates.  For example, solving for the updates allows for a divergence error of near machine precision in our tests, whereas if we solved directly for $(u_{k+1},p_{k+1})$ with GMRES tolerance of $10^{-8}$ we would obtain only $\| \nabla \cdot u_{k+1} \|\approx 10^{-3}$ for the 3D cavity tests.

In this section, we first present the AAg convergence results and results for $\gamma_k$ as a predictor for the convergence rate, then compare the performance of AAg, AA and NGMRES, and finally propose and test an adaptive depth strategy for AAg. 

\subsection{Test problem setup}

We now give descriptions of the test problems used in our numerical experiment: 2D channel flow past a block, 3D driven cavity, and a 3D stenotic artery model.

\subsubsection{Channel flow past a cylinder}

For our first test problem, we use the classical 2D channel flow past a block benchmark from e.g. 
\cite{TGO15, R97, SDN99,HRV24}.  The domain is a $2.2\times0.41$ rectangular channel with a square of side length $0.1$ centered at $(0.2, 0.2)$.  No-slip velocity is enforced on the walls and block, and at the inflow and outflow we enforce  
\begin{align}
	u(0,y)= \left( \begin{array}{c} \frac{6}{0.41^2}y(0.41-y) \\ 0 \end{array}\right).
\end{align}
There is no external forcing ($f=0$), and we use varying $Re$=$\frac{UL}{\nu}$=100, 150 and 200, which are calculated using length scale $L=0.1$ as the block width, and $U=1$. We note that the NSE with $Re>50$ is known to produce periodic-in-time solutions \cite{HRV24,OR25}, but we are solving for steady state solutions at these $Re$ (recall steady solutions exist for any $Re$ \cite{Laytonbook}).  A solution from our computations with $Re$=100 is shown in Figure \ref{cylpic}.

\begin{figure}[H]
  \centering
       \includegraphics[scale=0.11]{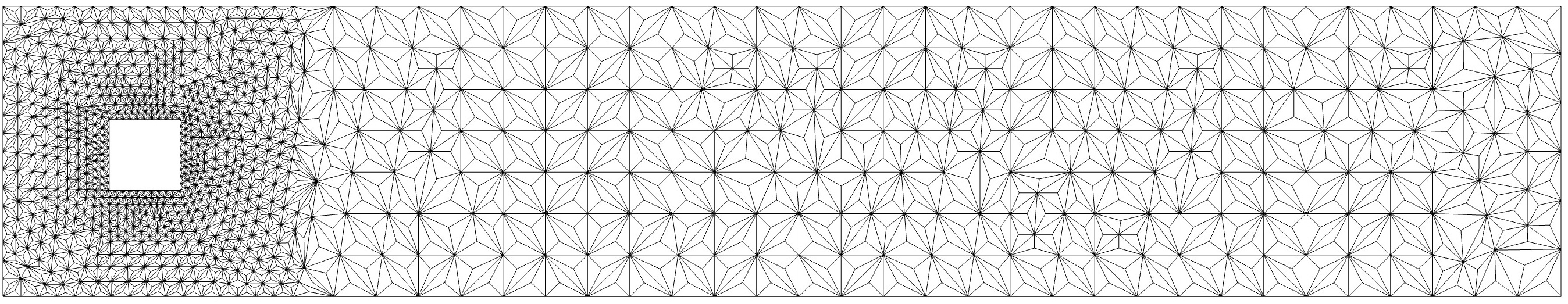}\\
        \includegraphics[scale=0.34]{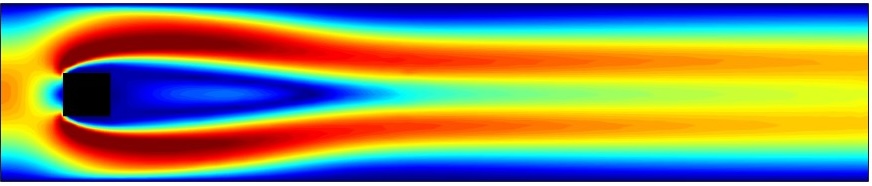} 
  \caption{\label{cylpic} Shown above is a sample mesh for 2D channel flow past a block (top) and $Re$=100 solution shown as speed contours.}
\end{figure}

For the discretization, $(P_2, P_1^{disc})$ Scott-Vogelius (SV) elements are used on barycenter refined Delaunay meshes of the domain.  Computations were done on several meshes, with results nearly identical across meshes (which is expected, as the convergence results are mesh independent and other AA and NGMRES tests have always shown mesh independence \cite{PR25,HR26}.  The mesh used for the results below provided 120K velocity and 89K pressure dof.  Figure \ref{cylpic} shows a coarse mesh of the domain, significantly coarser than what is used in our tests.  The nonlinear solver tolerance was taken to be $10^{-8}$ in the $V'$ norm, and the linear solvers for these 2D tests used a direct solver.

\subsubsection{3D driven cavity}

\begin{figure}[h]
\center
$Re$=1000 \\
\includegraphics[width = .9\textwidth, height=.27\textwidth,viewport=120 0 1180 340, clip]{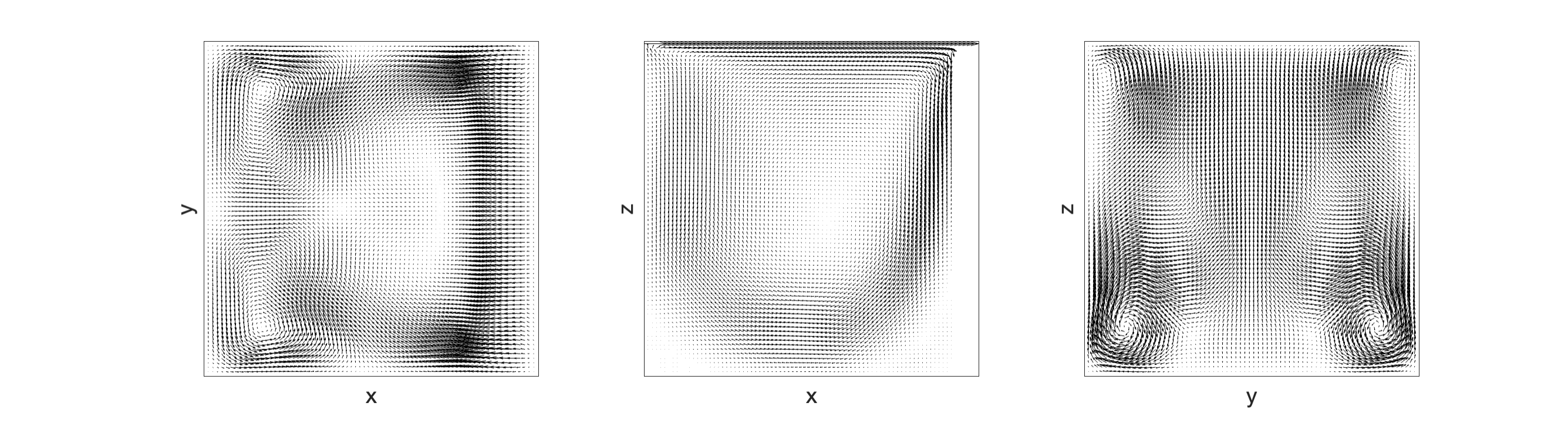}  \\
\caption{\label{cav3d} The plot above shows the $Re$=1000 3D driven cavity velocity solution displayed as midsliceplanes of velocity.}
\end{figure}

The 3D driven cavity is used for our second benchmark test.  This problem is analogous to the well studied 2D driven cavity, where $f=0$, $\Omega=(0,1)^3$, and 
homogeneous Dirichlet boundary conditions for velocity are enforced on the walls but $[1, 0, 0]^T$ is enforced on the `lid'.  We compute with varying $Re=\nu^{-1}$, and midspliceplane velocity vector solutions for $Re$=1000 are shown in Figure \ref{cav3d}.

The mesh is constructed as follows: first we create a $ \mathcal{M}^3$ grid of boxes from Chebychev points on $[0, 1]$,  then refine each box into 6 tetrahedra, and finally barycenter refine each of those tetrahedra into 4 tetrahedra.  Computations used $\mathcal{M}=$11 and 13, which provided 796K and 1.3M total dof, respectively, when equipped with $(P_3,P_2^{disc})$ SV elements, and no difference was observed for the convergence or solutions on the two meshes.  Hence results are shown only for the fine mesh below.  Our computed solutions were found to match the literature well \cite{WB02}, see Figure \ref{cav3d}.  The nonlinear solver tolerance was $10^{-7}$ in the $V'$ norm.  The linear solves used an augmented Lagrangian preconditioning essentially following \cite{benzi,HR13,BL12} (using grad-div stabilization and preconditioning the Schur complement with the pressure mass matrix) with GMRES as the outer solver and direct inner solves of the velocity submatrix.  The outer linear solver tolerance was set as $10^{-8}$, and convergence was generally reached within 2 to 3 iterations.

\subsubsection{Flow in a 3D stenotic artery model}

\begin{figure}[h!]
\center
\includegraphics[width = .8\textwidth, height=.23\textwidth,viewport=0 0 1200 300, clip]{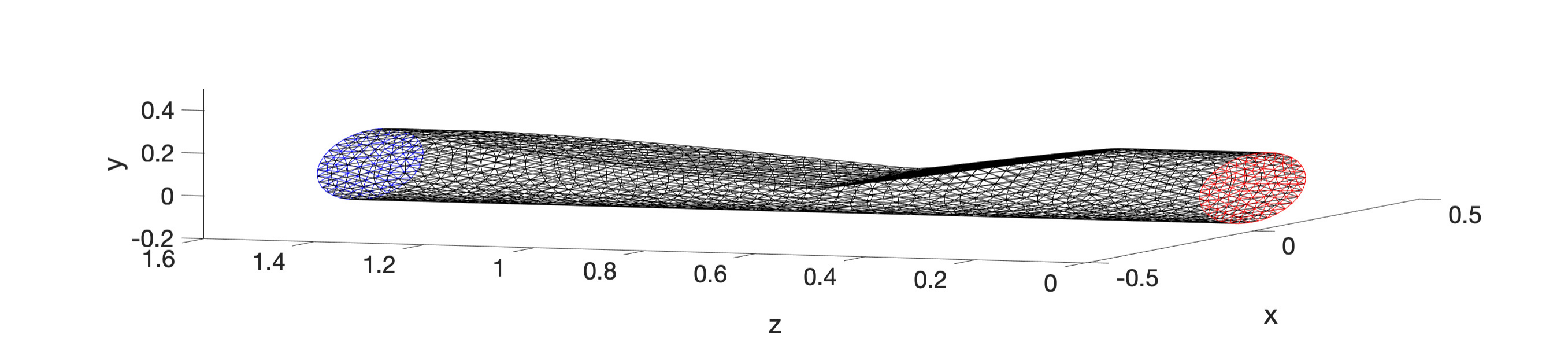}
\caption{\label{arterymesh} Shown above is the artery mesh (before the barycenter refinement is applied) restricted to the surface. }
\end{figure}

\begin{figure}[h!]
\center
\includegraphics[width = .8\textwidth, height=.23\textwidth,viewport=50 0 1200 300, clip]{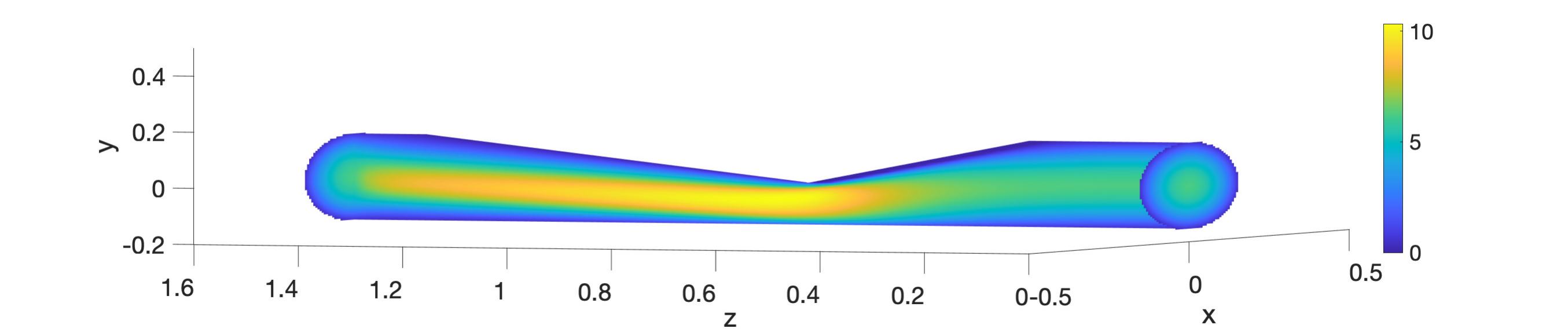}  
\caption{\label{artery100} Shown above is speed contour slices for the $\nu=\frac{1}{500}$ solution.}
\end{figure}

For our last test problem, we use the stenotic artery model from \cite{TKLV24}.  This application benchmark has length units given in $cm$ and time units in seconds. The vessel domain is created from a 3D deformed cylinder with diameter $d=0.16$ and length $l=1.6$.  We enforce no-slip velocity on the artery walls, and parabolic and outflow using a maximum velocity at the vessel center of 6.22.
The viscosity is taken to be $\nu=0.002$,
and no forcing is used. 

The domain is discretized first from a regular tetrehedralization (thanks to the authors of \cite{TKLV24} for providing the mesh!), which is shown as a surface mesh in Figure \ref{arterymesh}.  This mesh is then barycenter refined, which gives 52,912 total tetrahedra, and is then equipped with $(P_3,P_2^{disc})$ SV elements, yielding 1.33M total dof.  A plot of the solution found on this mesh using AAg-Picard is shown in Figure \ref{artery100}.  The same linear solver used for the 3D driven cavity is used on this problem.  

\subsection{Tests for AAg convergence and $\gamma_{k}$}

\begin{figure}[h!]
\center
\includegraphics[width = .32\textwidth, height=.23\textwidth,viewport=0 0 530 420, clip]{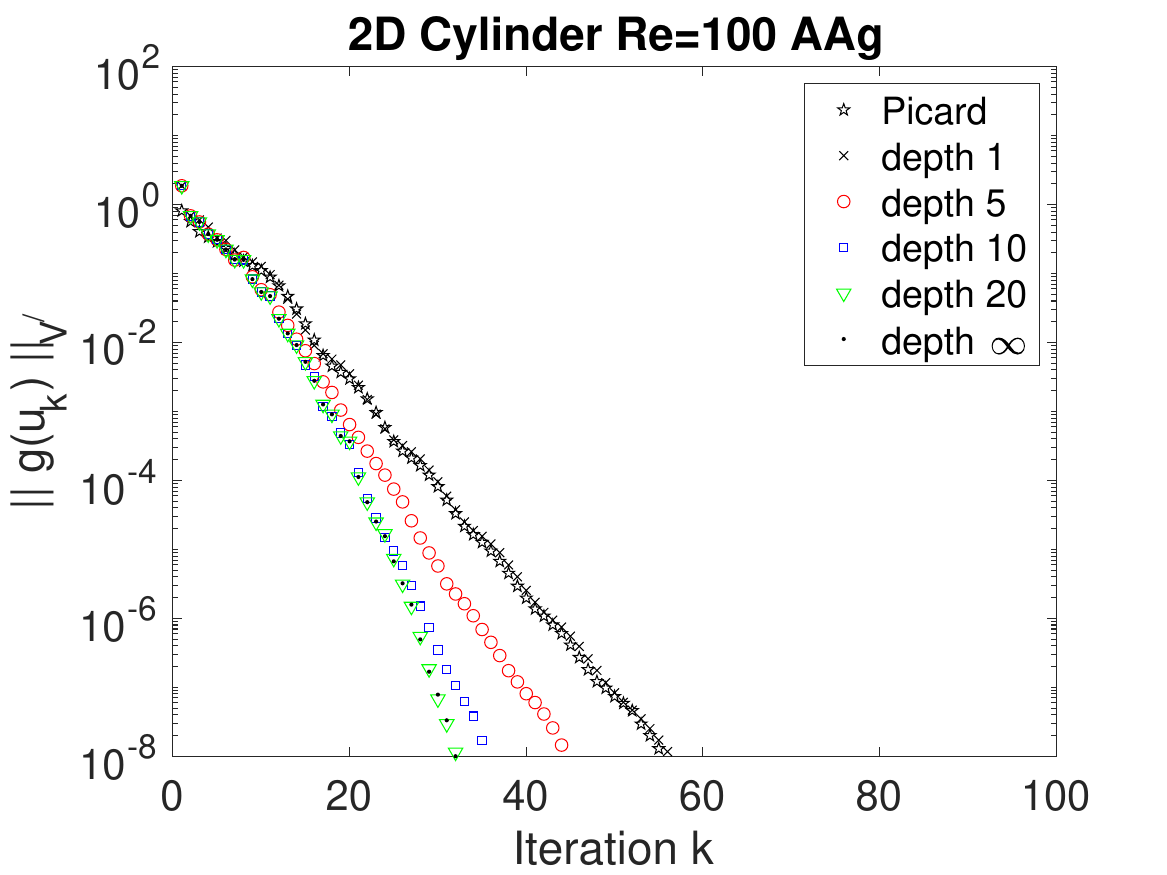}  
\includegraphics[width = .32\textwidth, height=.23\textwidth,viewport=0 0 530 420, clip]{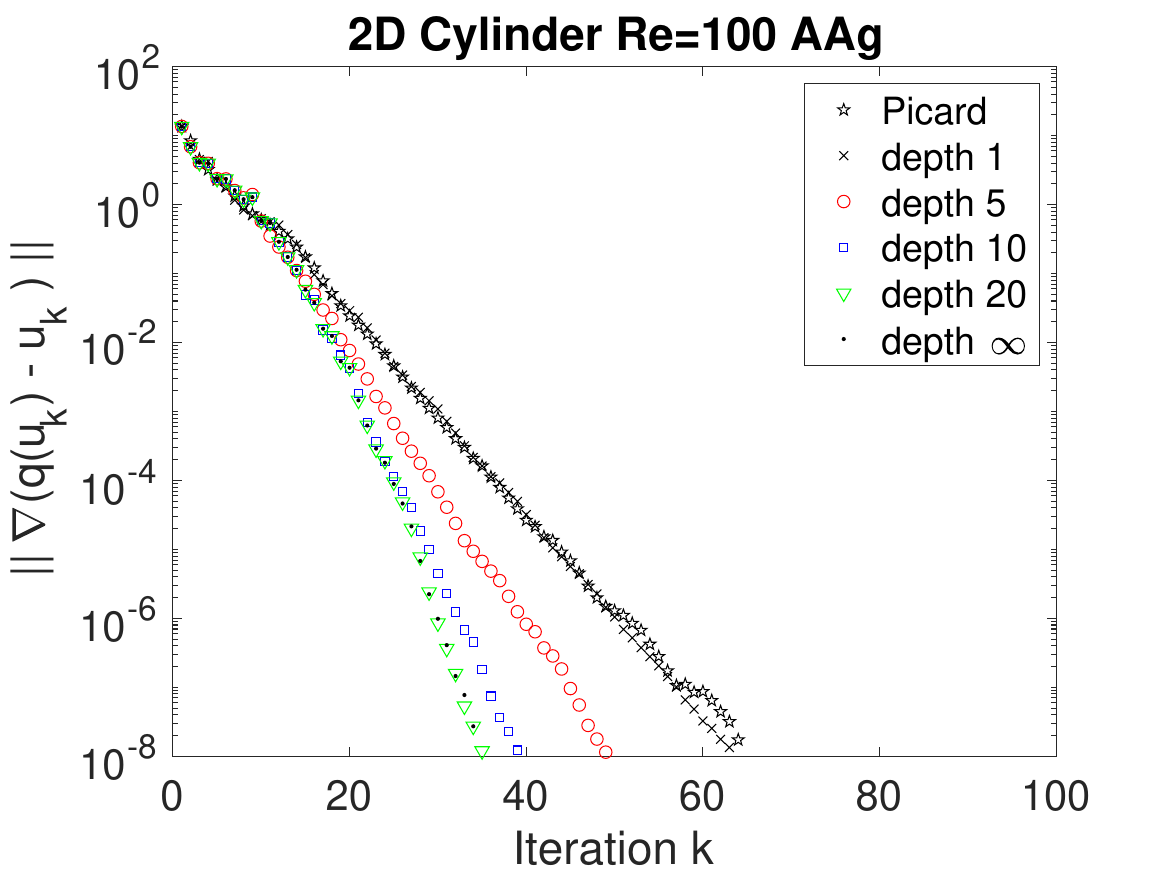}  
\includegraphics[width = .32\textwidth, height=.23\textwidth,viewport=0 0 530 420, clip]{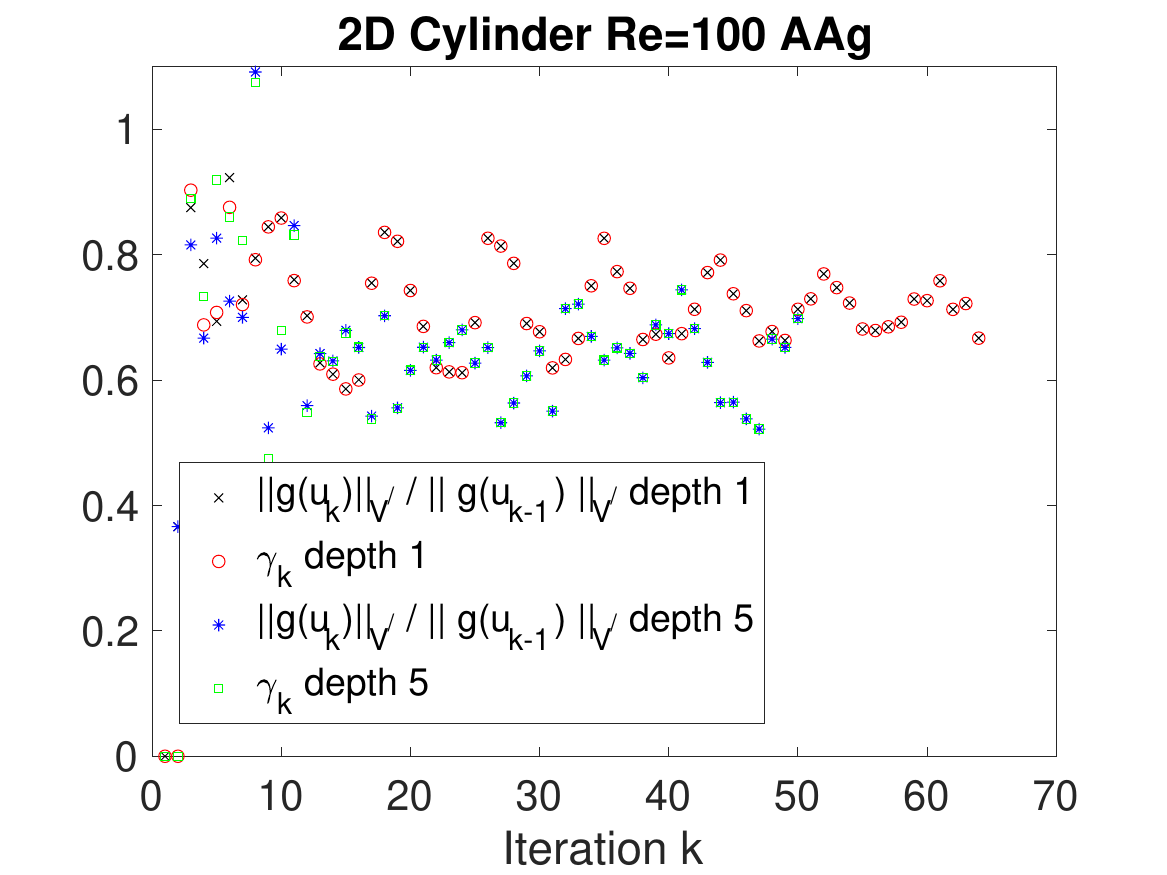} \\ 
\includegraphics[width = .32\textwidth, height=.23\textwidth,viewport=0 0 530 420, clip]{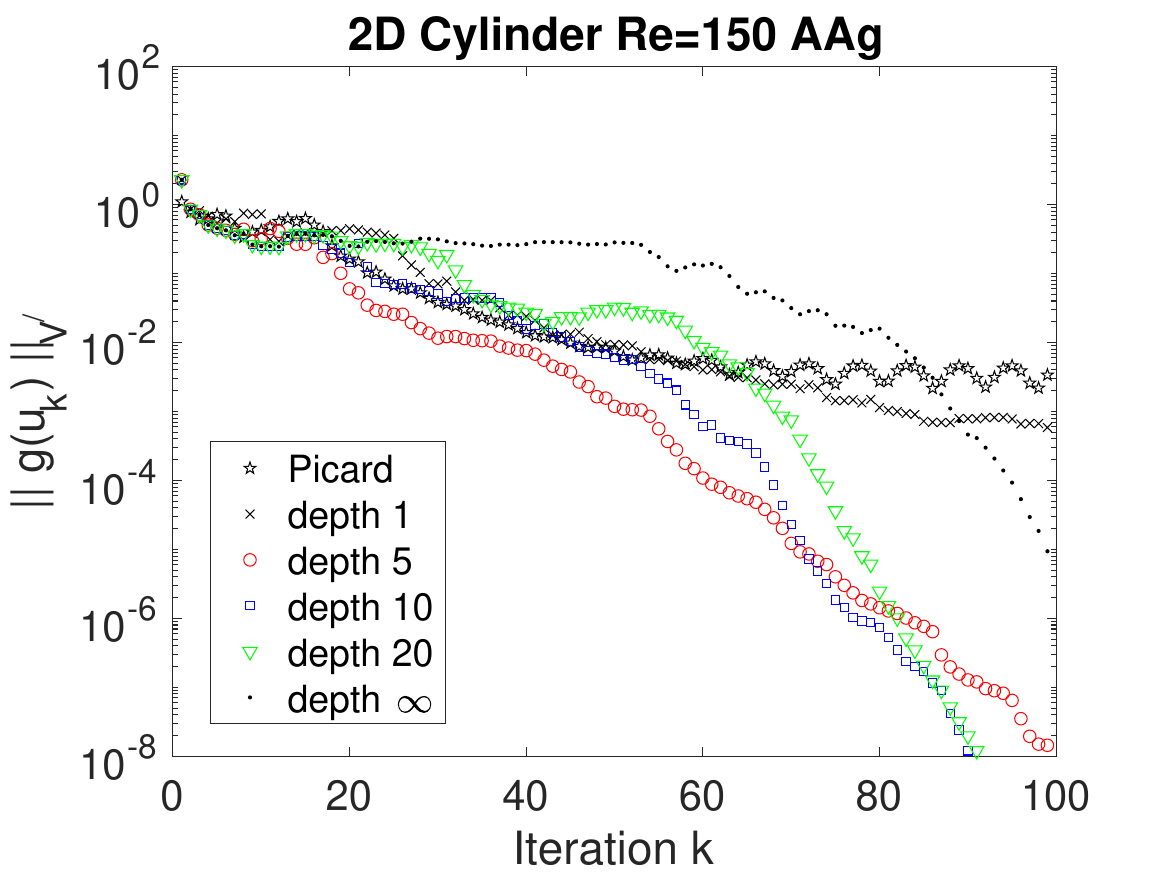}  
\includegraphics[width = .32\textwidth, height=.23\textwidth,viewport=0 0 530 420, clip]{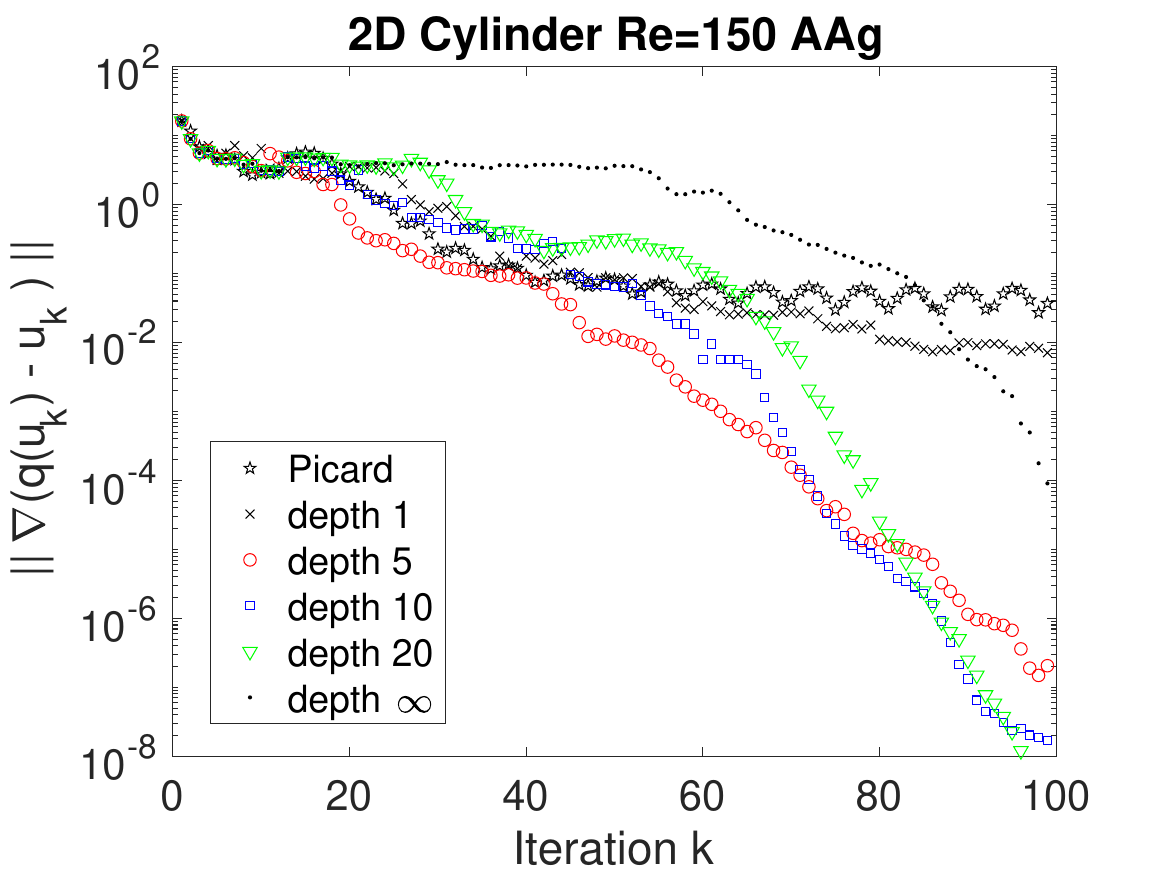}  
\includegraphics[width = .32\textwidth, height=.23\textwidth,viewport=0 0 530 420, clip]{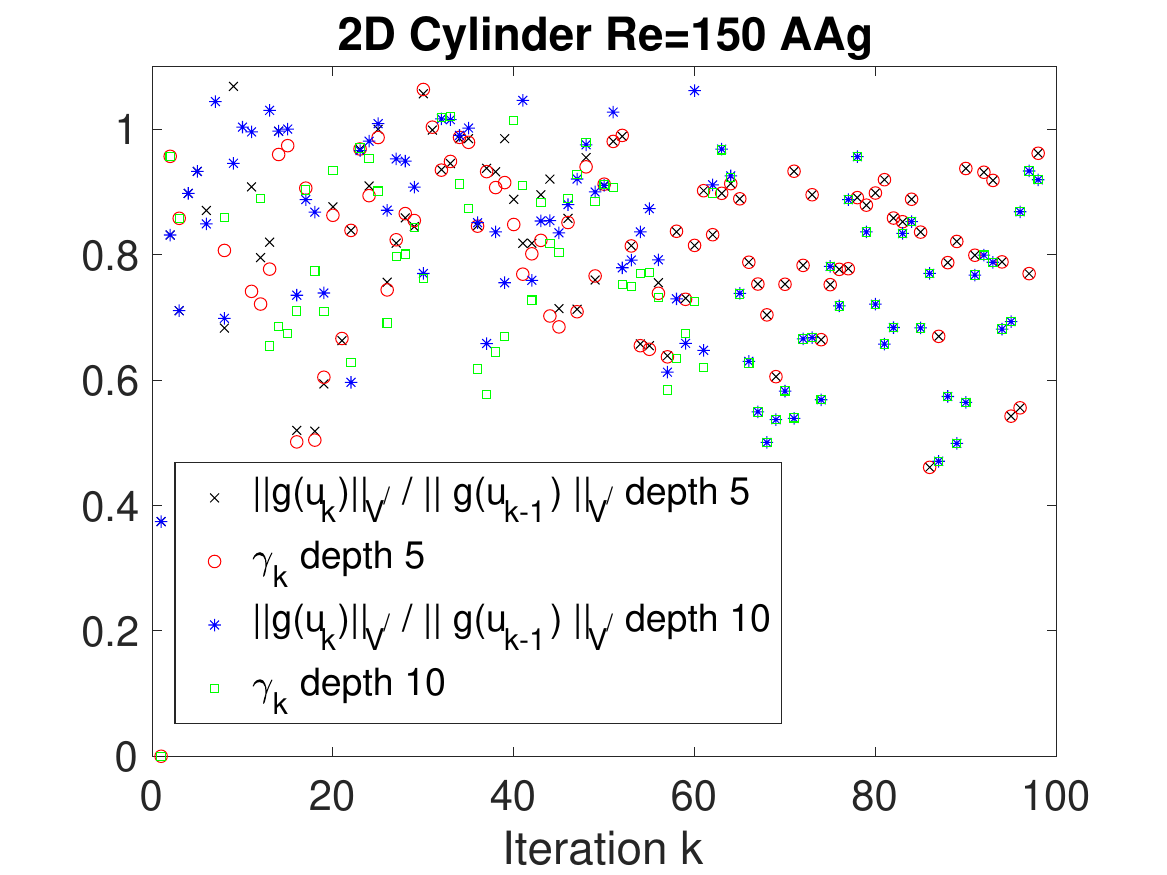} \\
\includegraphics[width = .32\textwidth, height=.23\textwidth,viewport=0 0 530 420, clip]{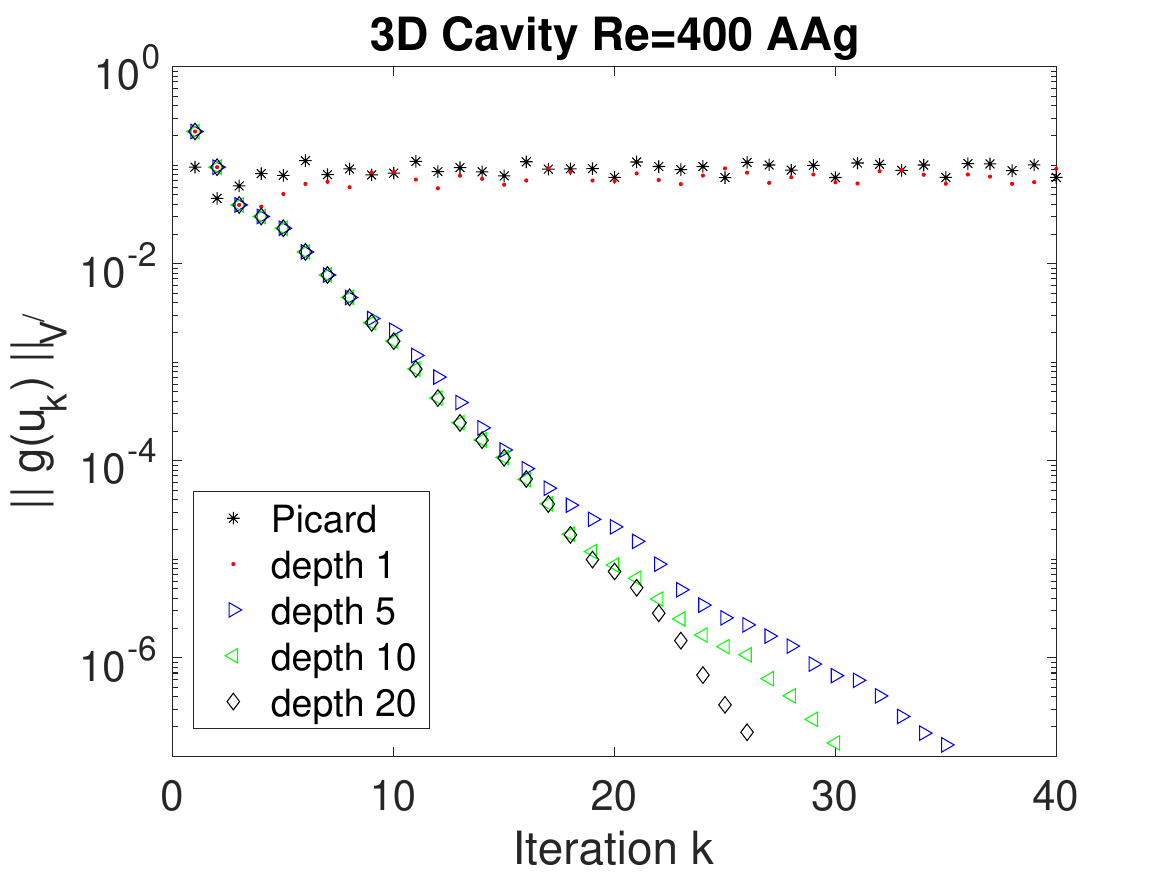}  
\includegraphics[width = .32\textwidth, height=.23\textwidth,viewport=0 0 530 420, clip]{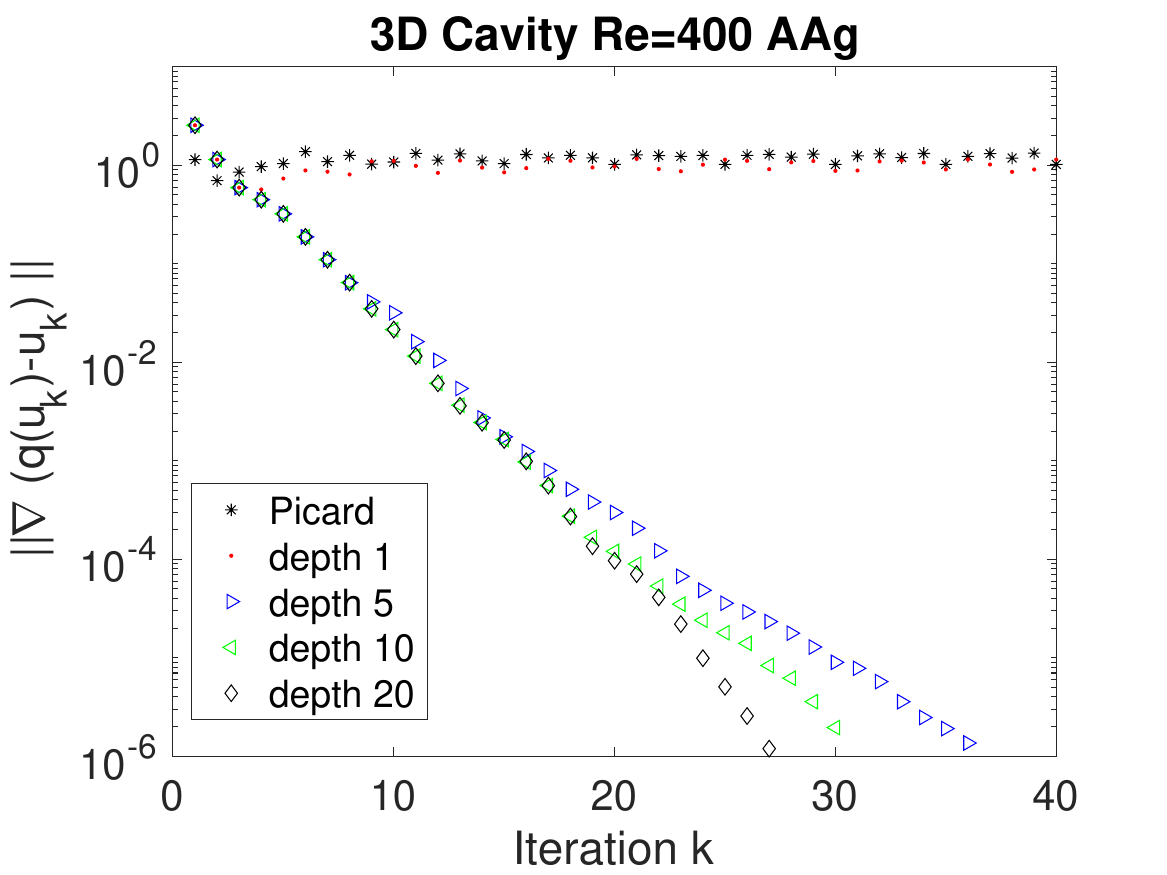}  
\includegraphics[width = .32\textwidth, height=.23\textwidth,viewport=0 0 530 420, clip]{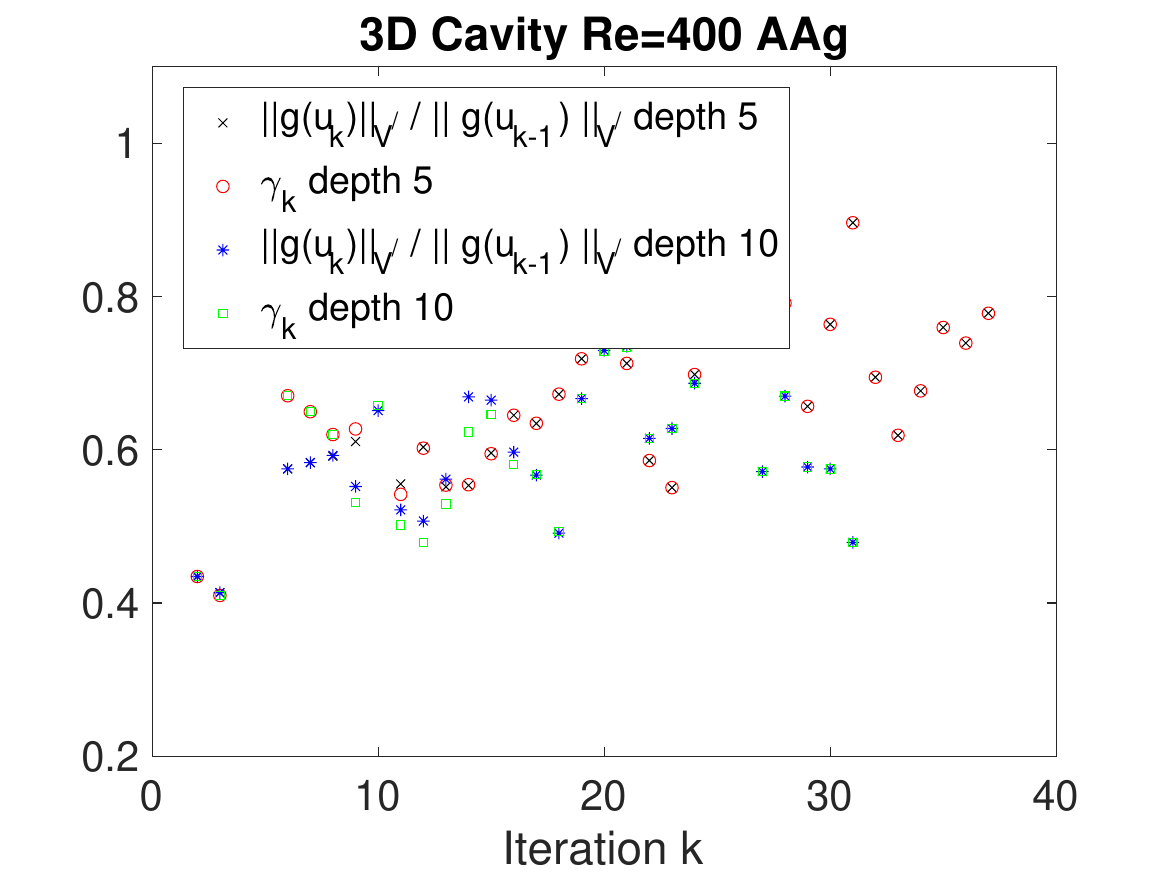} \\
\includegraphics[width = .32\textwidth, height=.23\textwidth,viewport=0 0 530 420, clip]{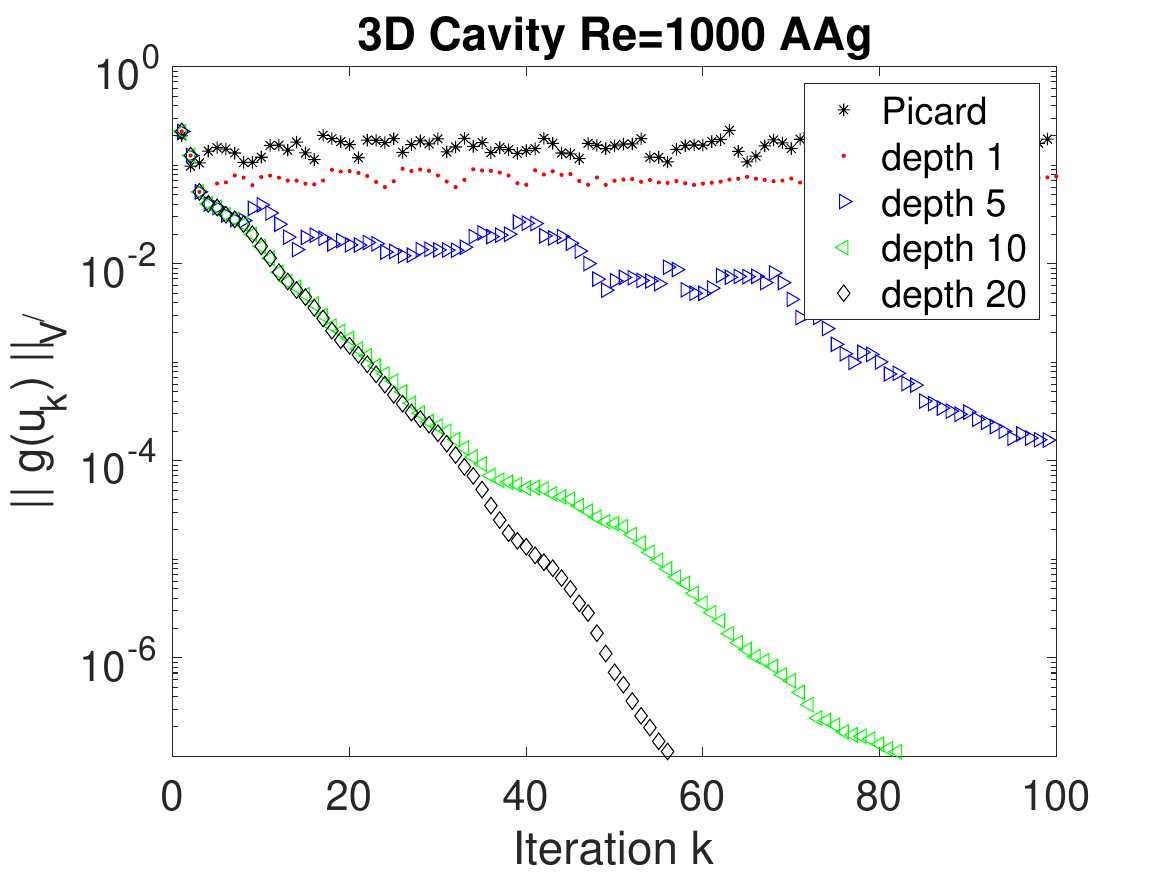}  
\includegraphics[width = .32\textwidth, height=.23\textwidth,viewport=0 0 530 420, clip]{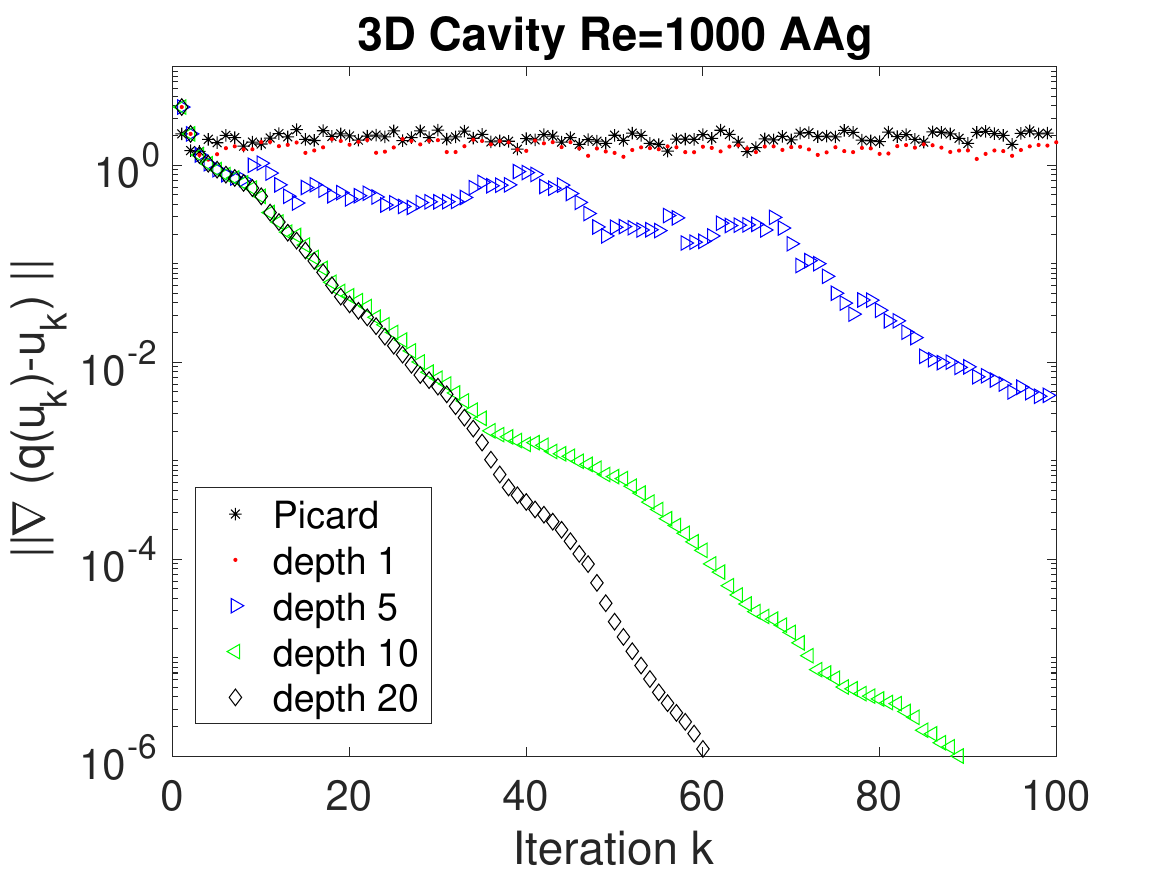}  
\includegraphics[width = .32\textwidth, height=.23\textwidth,viewport=0 0 530 420, clip]{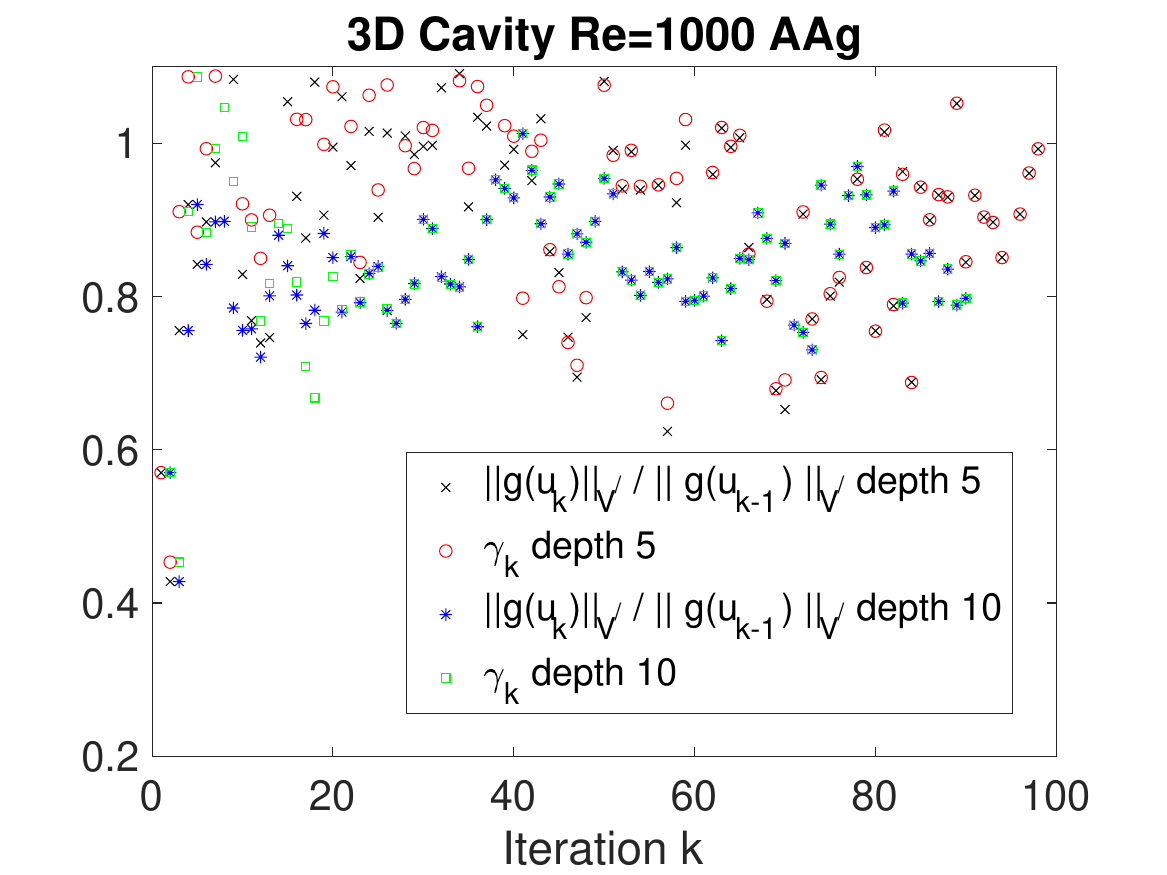} \\ 
\includegraphics[width = .32\textwidth, height=.23\textwidth,viewport=0 0 530 420, clip]{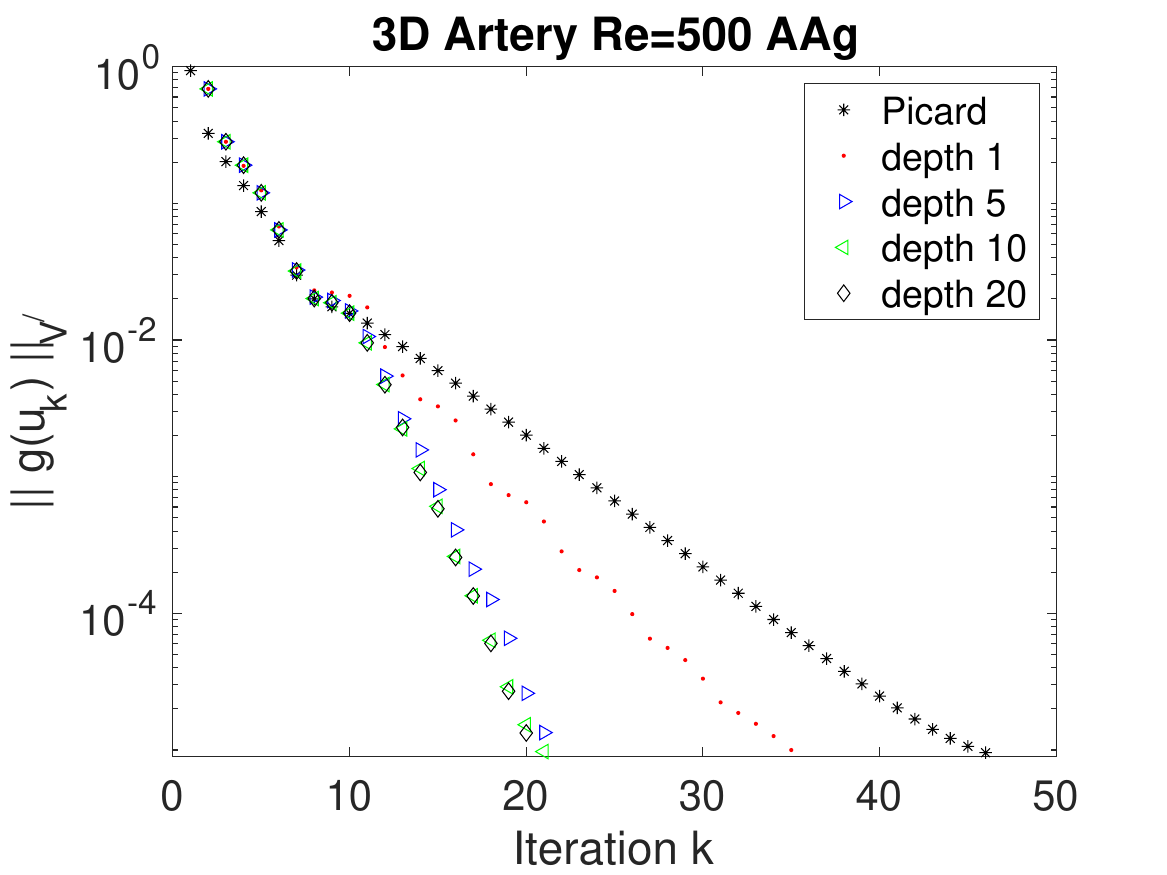}  
\includegraphics[width = .32\textwidth, height=.23\textwidth,viewport=0 0 530 420, clip]{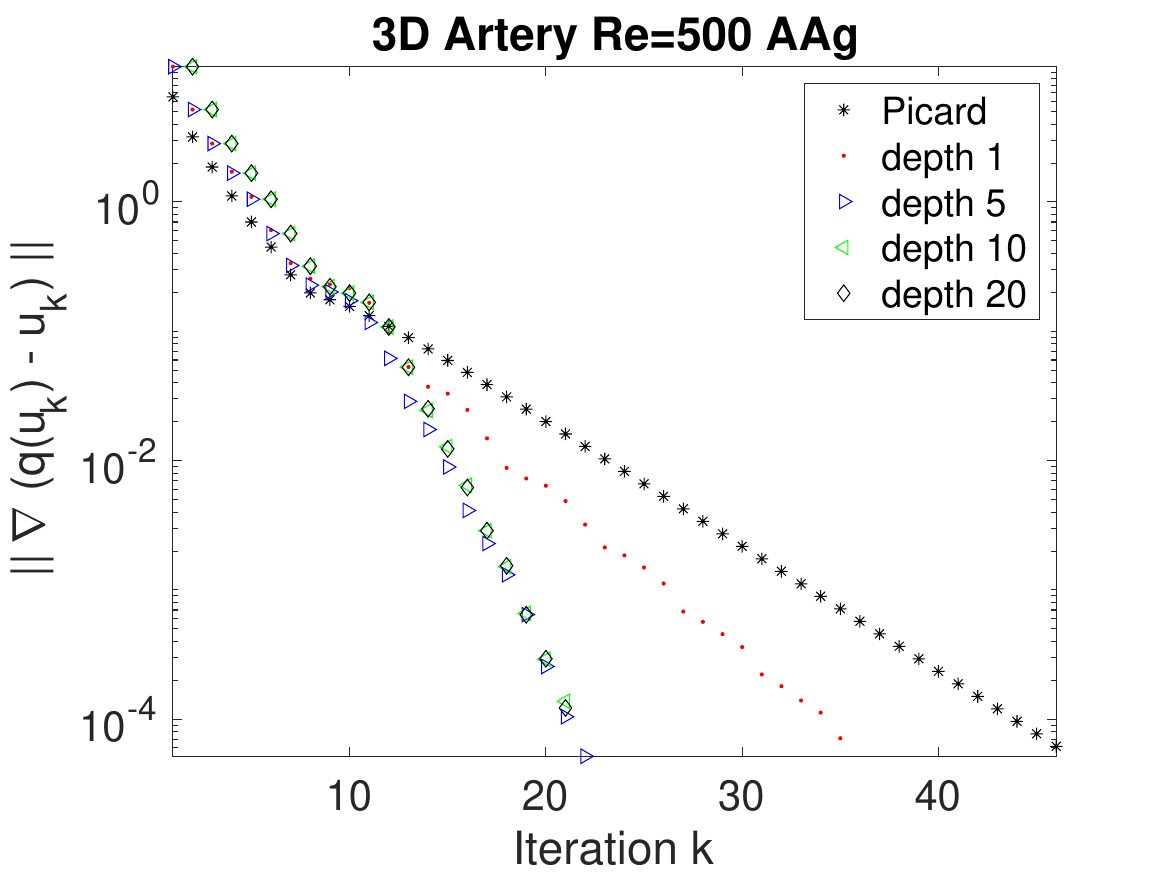}  
\includegraphics[width = .32\textwidth, height=.23\textwidth,viewport=0 0 530 420, clip]{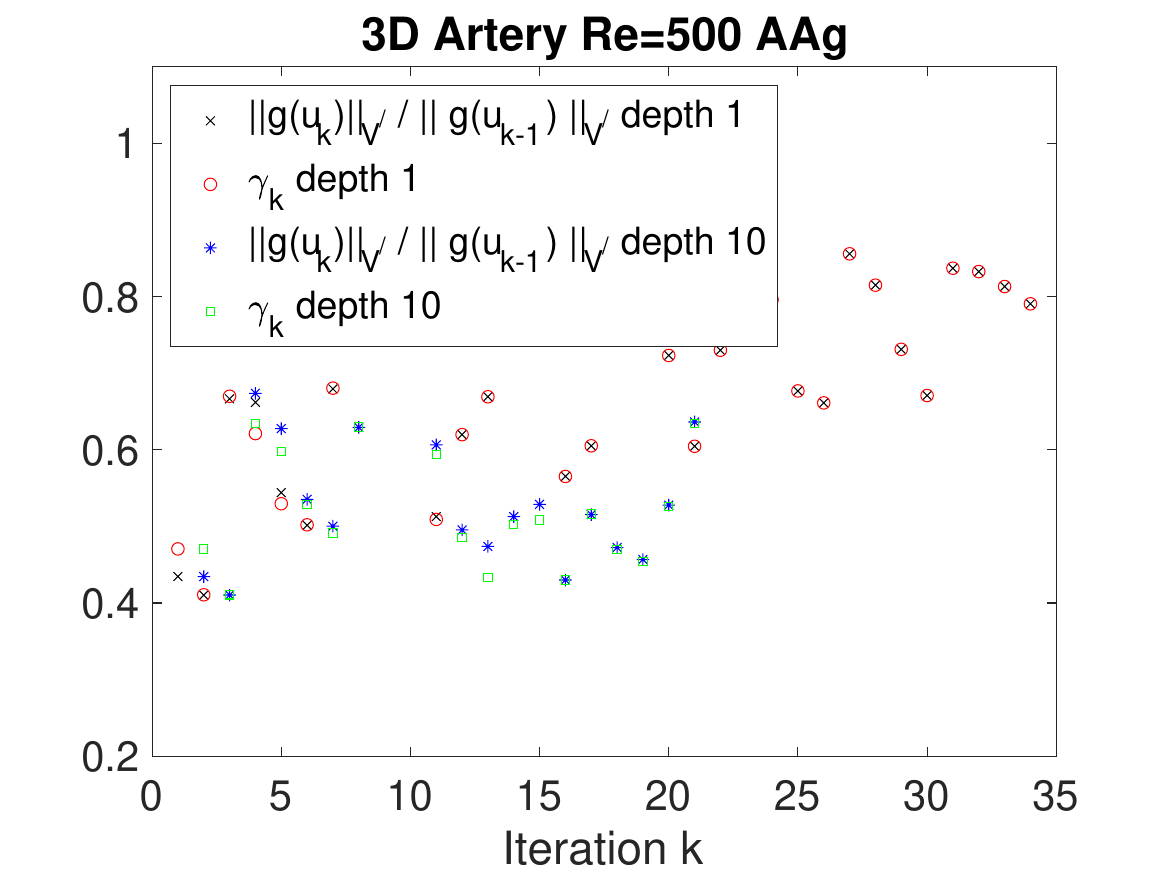} 
\caption{\label{dcconvopt} The plots above show AAg convergence in $\| g(u_k)\|_{V'}$, $\| \nabla (q(u_k)-u_k)\|$, and $\gamma_k$ with $\frac{\| g(u_k)\|_{V'}}{\| g(u_{k-1})\|_{V'}}$ (left to right), for 
2D channel flow past a block, 3D driven cavity, and 3D stenotic artery.}
\end{figure}

We now consider convergence behavior of AAg-Picard on the 2D channel flow past a block with $Re$=100 and 150, the 3D driven cavity with $Re$=400 and 1000, and the 3D stenotic artery model.  We compute convergence as the nonlinear residual in the $V'$ norm $\| g(u_k) \|_{V'}$ (to match theory), and the Picard residual in the $H^1_0$ norm $\| \nabla (q(u_k)-u_k)\|$ (to match the usual Picard convergence theory \cite{GR86,temam} and AA-Picard theory \cite{PRX19,PR25}).  We observe very similar convergence behavior from these norms in all tests, although the actual values of the two norms are typically off by a constant. Convergence of these residuals measured in $\ell^2$ norms also show similar convergence behavior (plots omitted).  We also compute $\gamma_k$ and plot it along with the linear convergence ratio $\frac{ \| g(u_k)\|_{V'}}{ \| g(u_{k-1})\|_{V'}}$, to test the accuracy of $\gamma_k$ as the linear convergence rate predictor.

Results are shown in Figure \ref{dcconvopt}, for AAg-Picard with varying depths (depth $\infty$ means $m=k$ at iteration $k$ for AAg), along with usual Picard.  First we note that the first and second columns look very similar, which implies that convergence behavior in the $V'$ and $H^1_0$ norms are essentially the same (in later tests we will only use $V'$).   Overall, we see similar behavior in all tests.  In all cases, AAg-Picard shows significant improvement over Picard, with AAg either dramatically accelerating convergence and even enabling it since Picard fails for $Re$=150  2D channel flow past a block and both 3D cavity problems.  We also observe that, except for the $Re$=150  2D channel flow past a block tests, convergence is improved as the depth is increased.  

In the third column of Figure \ref{dcconvopt}, we observe in all cases that for larger enough $k$, $\gamma_k$ agrees (essentially) exactly with the observed linear convergence ratio at each step.  This suggests that our analysis is sharp: $\gamma_k$ is a very good predictor of the linear convergence rate, once the higher order terms become negligible.  Hence it fits that for the easier problems the agreement of $\frac{ \| g(u_k)\|_{V'}}{ \| g(u_{k-1})\|_{V'}}$ and $\gamma_k$ is reached for smaller $k$, while for the harder problems they do not agree until $k$ is much larger.  We note that for classical AA, there is no known way to predict the linear convergence rate this accurately.

\subsection{Convergence comparison for AAg, AA and NGMRES}

\begin{figure}[h!]
\center
\includegraphics[width = .32\textwidth, height=.23\textwidth,viewport=0 0 530 420, clip]{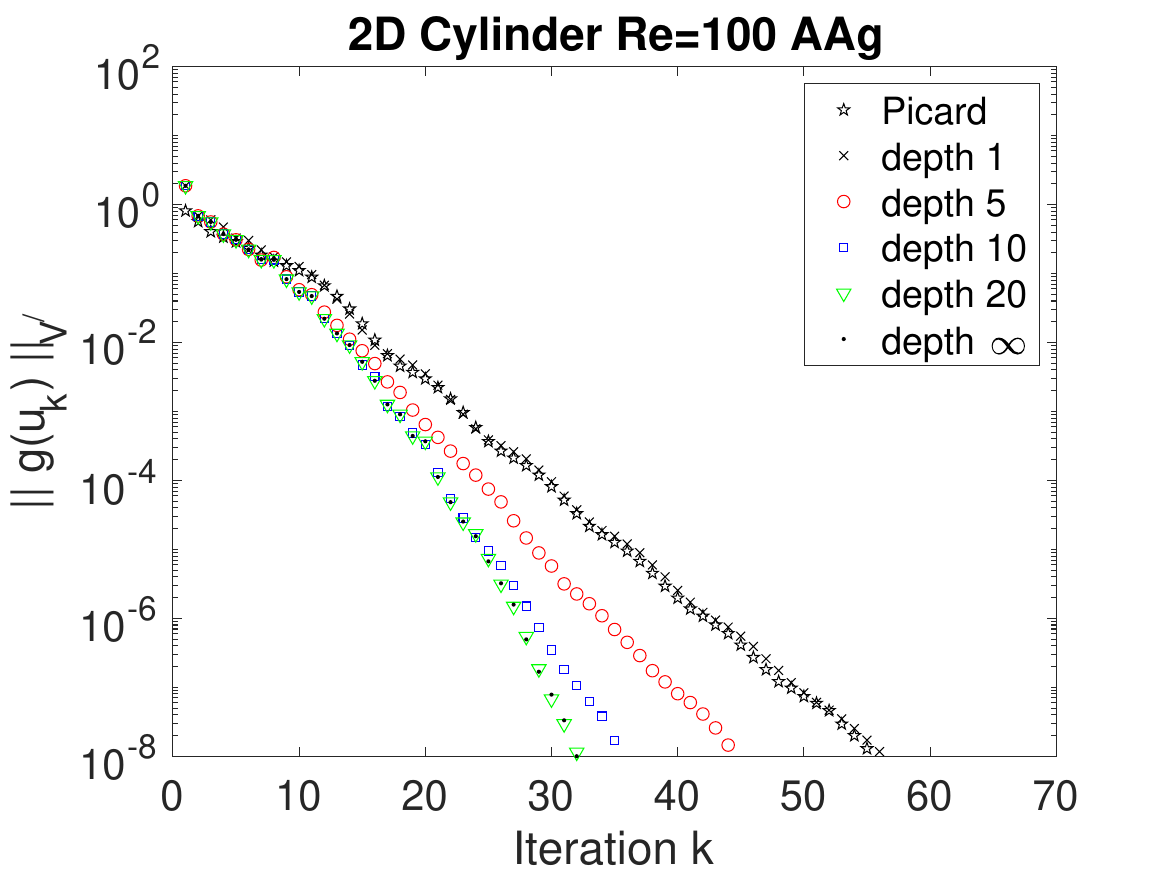}  
\includegraphics[width = .32\textwidth, height=.23\textwidth,viewport=0 0 530 420, clip]{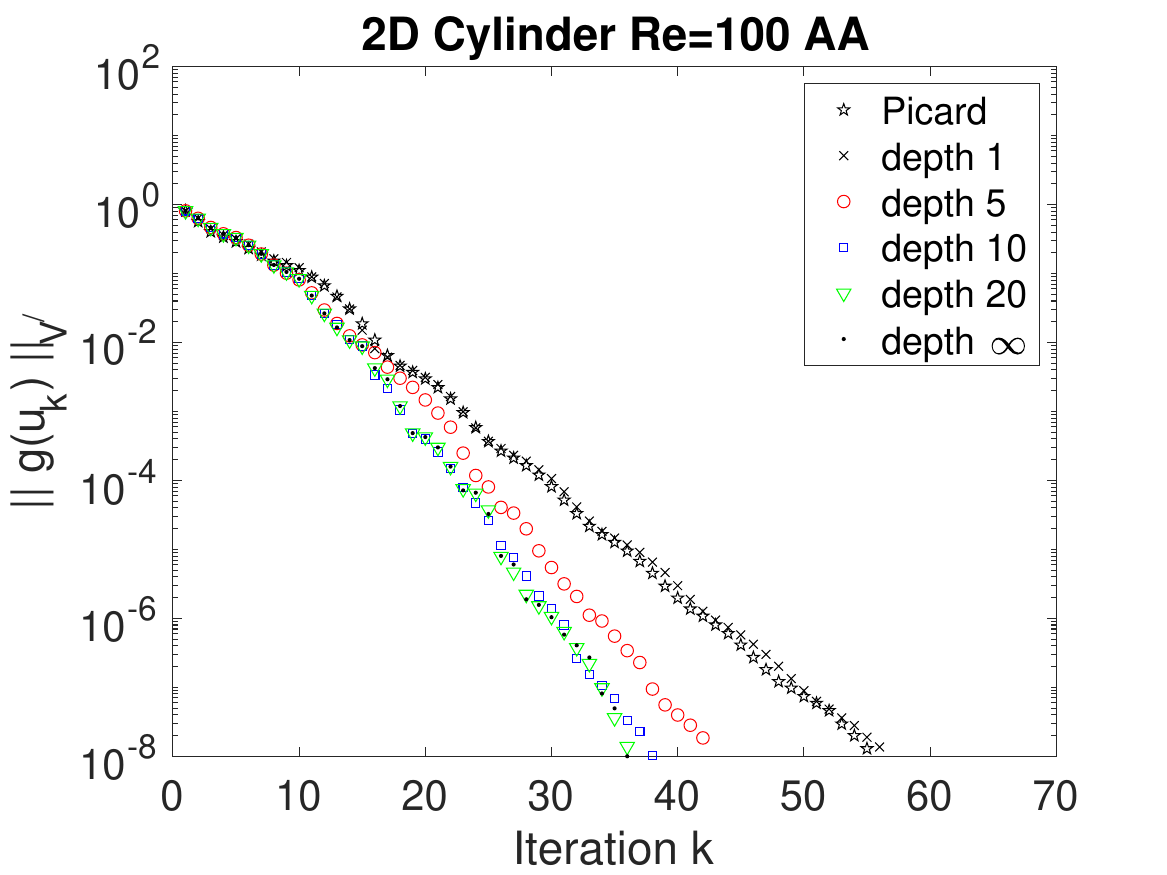}  
\includegraphics[width = .32\textwidth, height=.23\textwidth,viewport=0 0 530 420, clip]{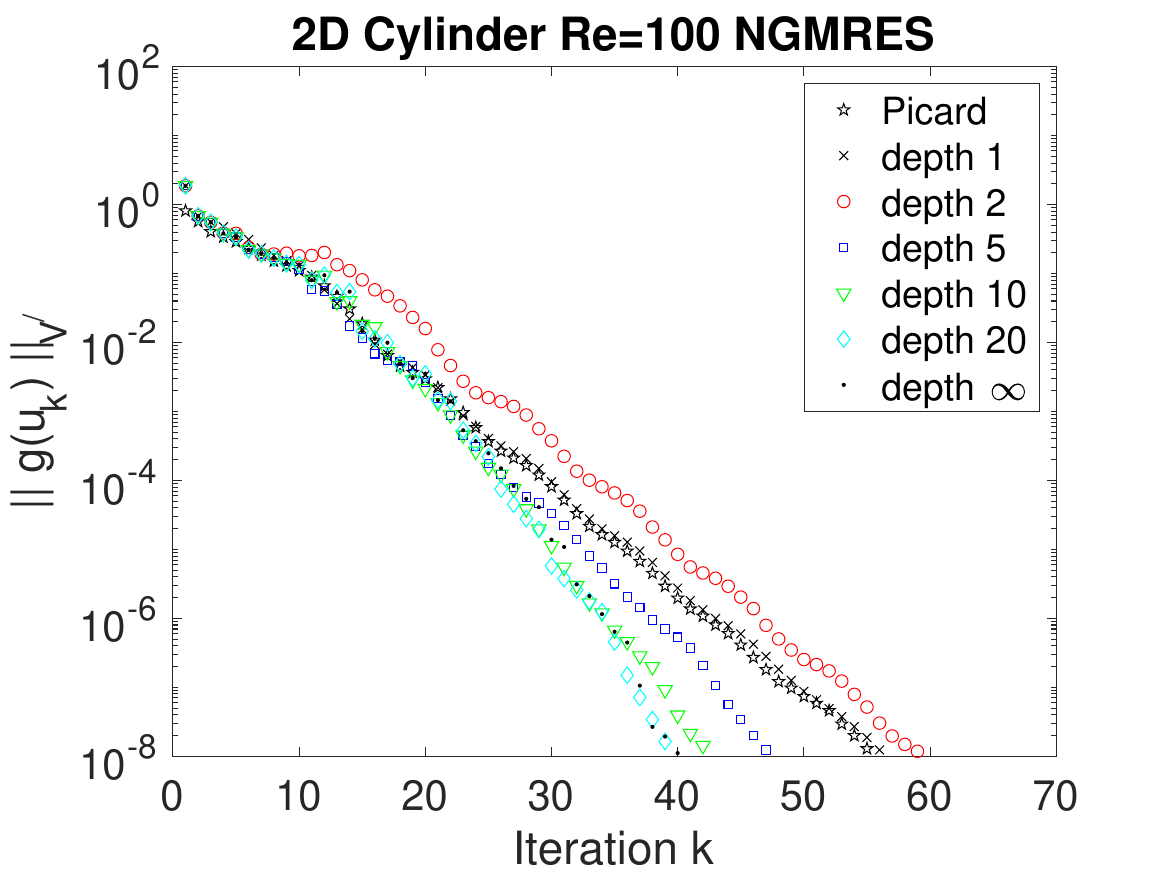} \\
\includegraphics[width = .32\textwidth, height=.23\textwidth,viewport=0 0 530 420, clip]{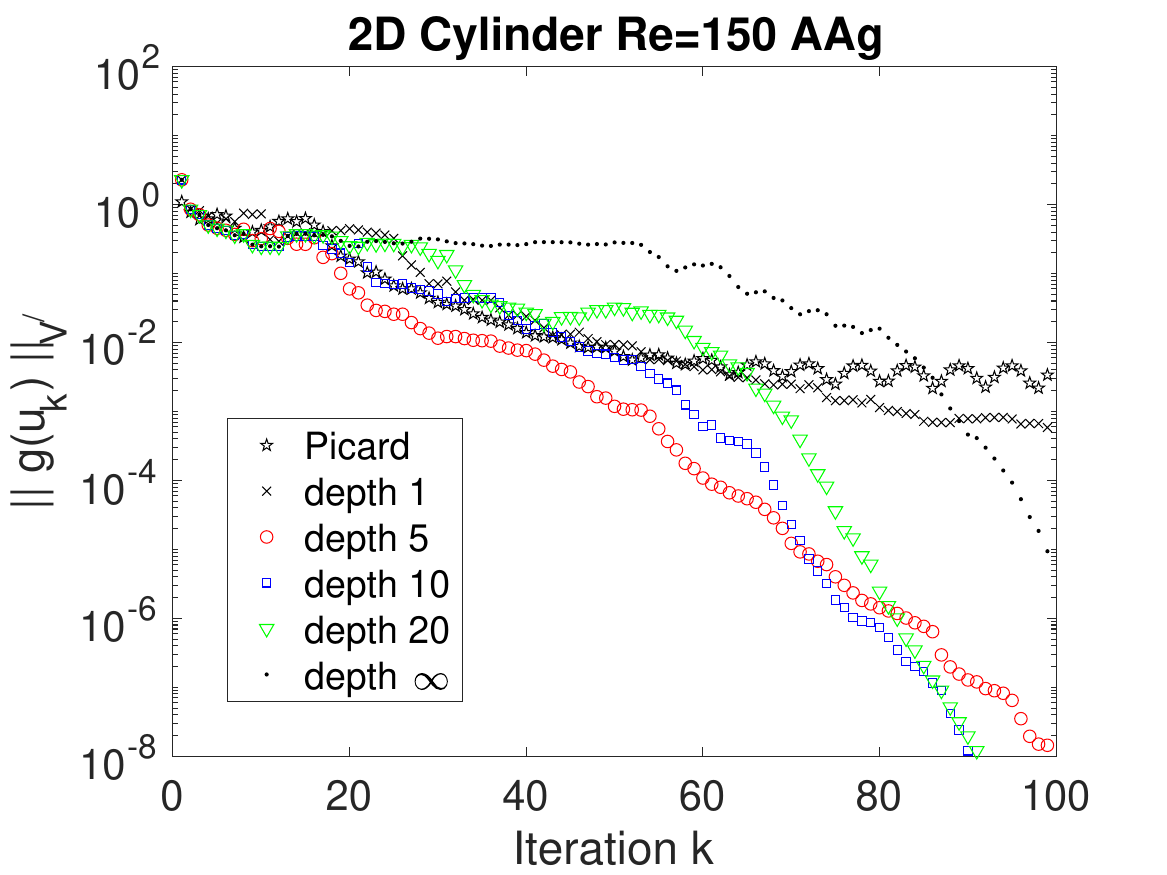}  
\includegraphics[width = .32\textwidth, height=.23\textwidth,viewport=0 0 530 420, clip]{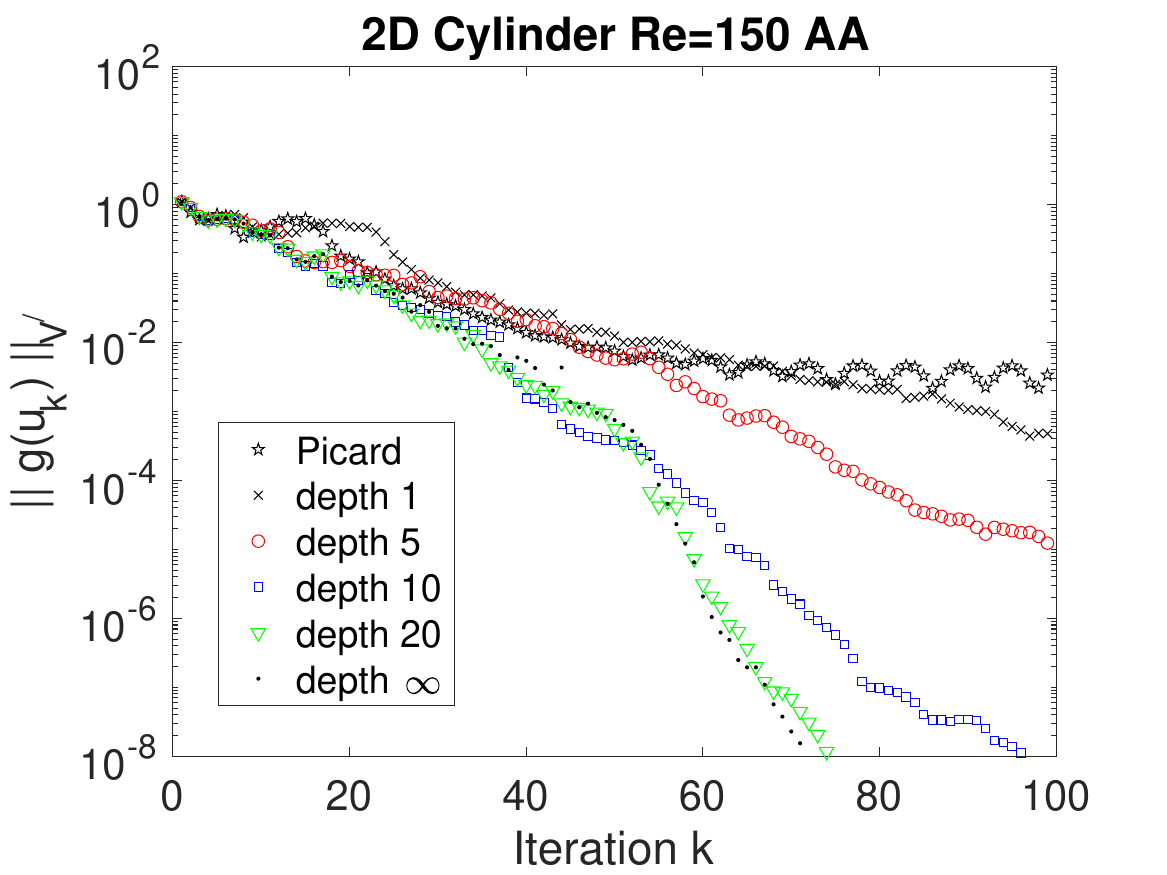}  
\includegraphics[width = .32\textwidth, height=.23\textwidth,viewport=0 0 530 420, clip]{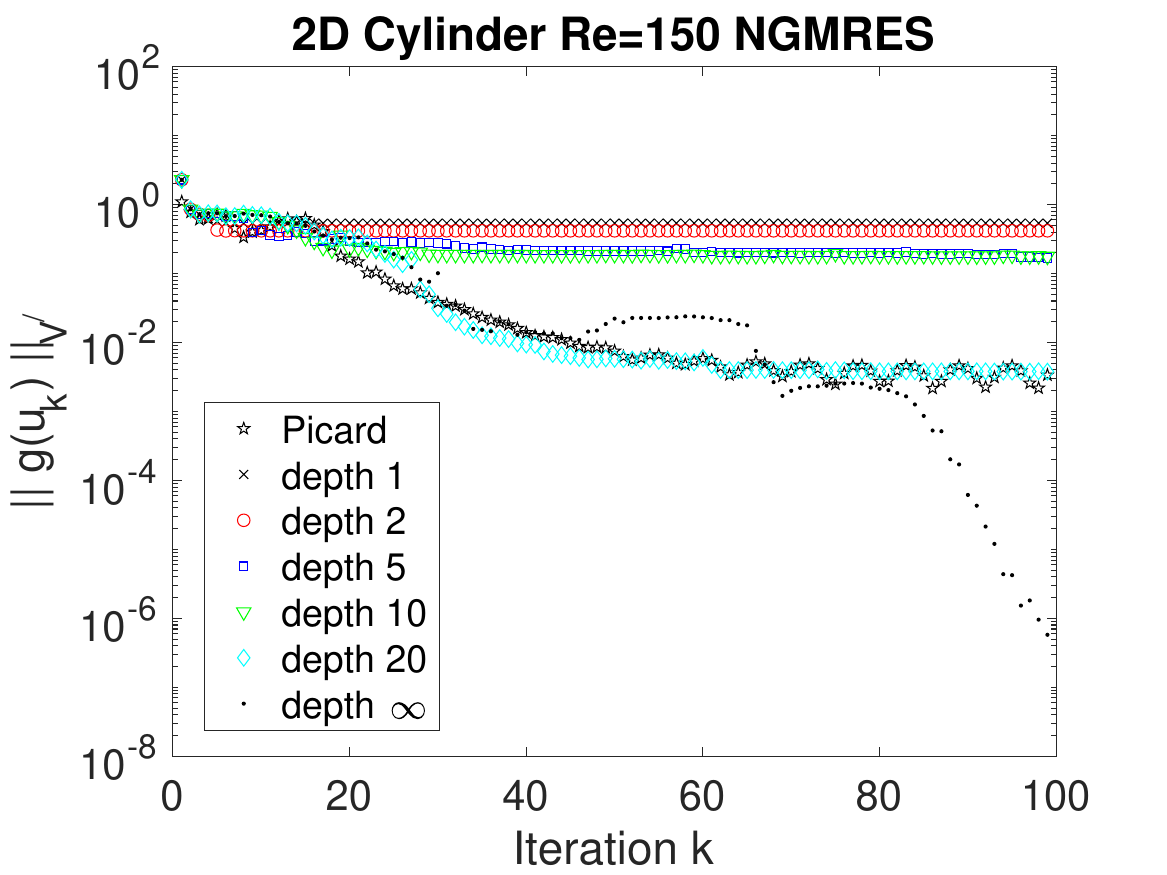} \\
\includegraphics[width = .32\textwidth, height=.23\textwidth,viewport=0 0 530 420, clip]{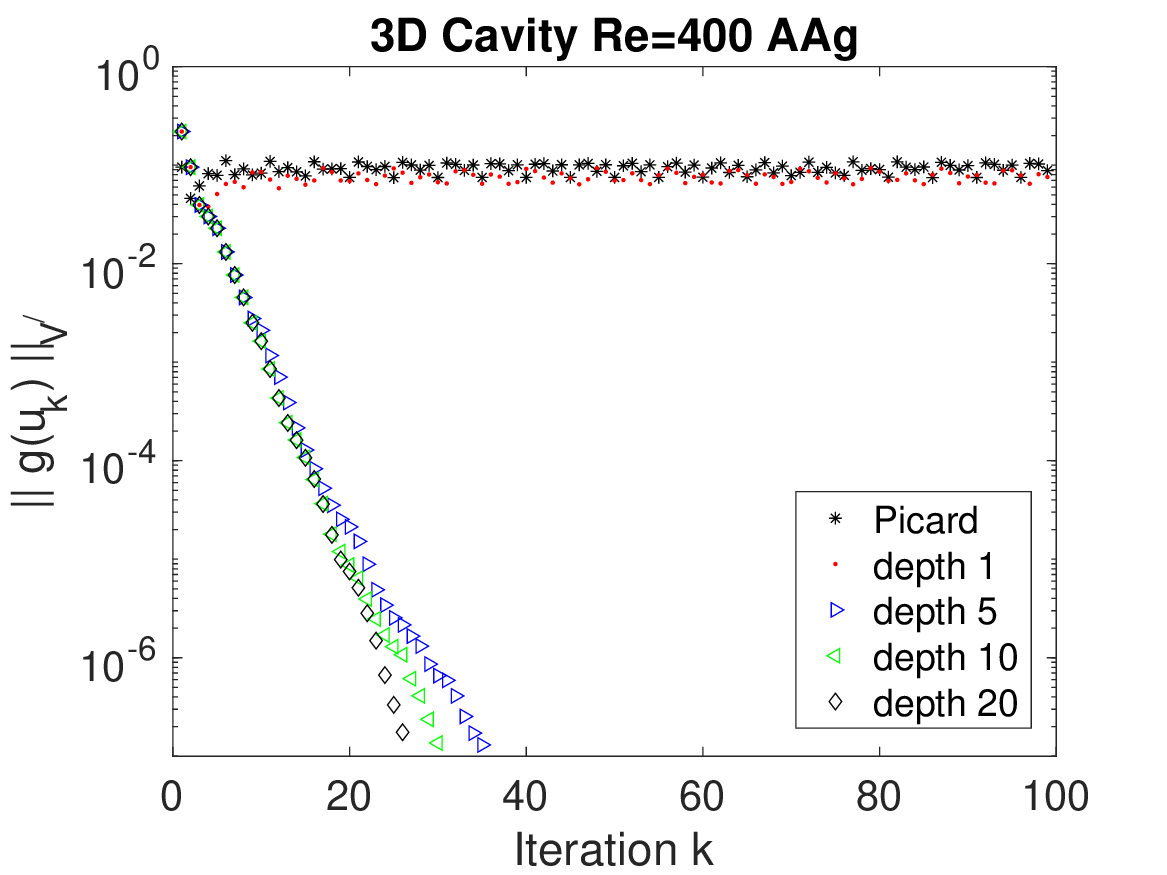}  
\includegraphics[width = .32\textwidth, height=.23\textwidth,viewport=0 0 530 420, clip]{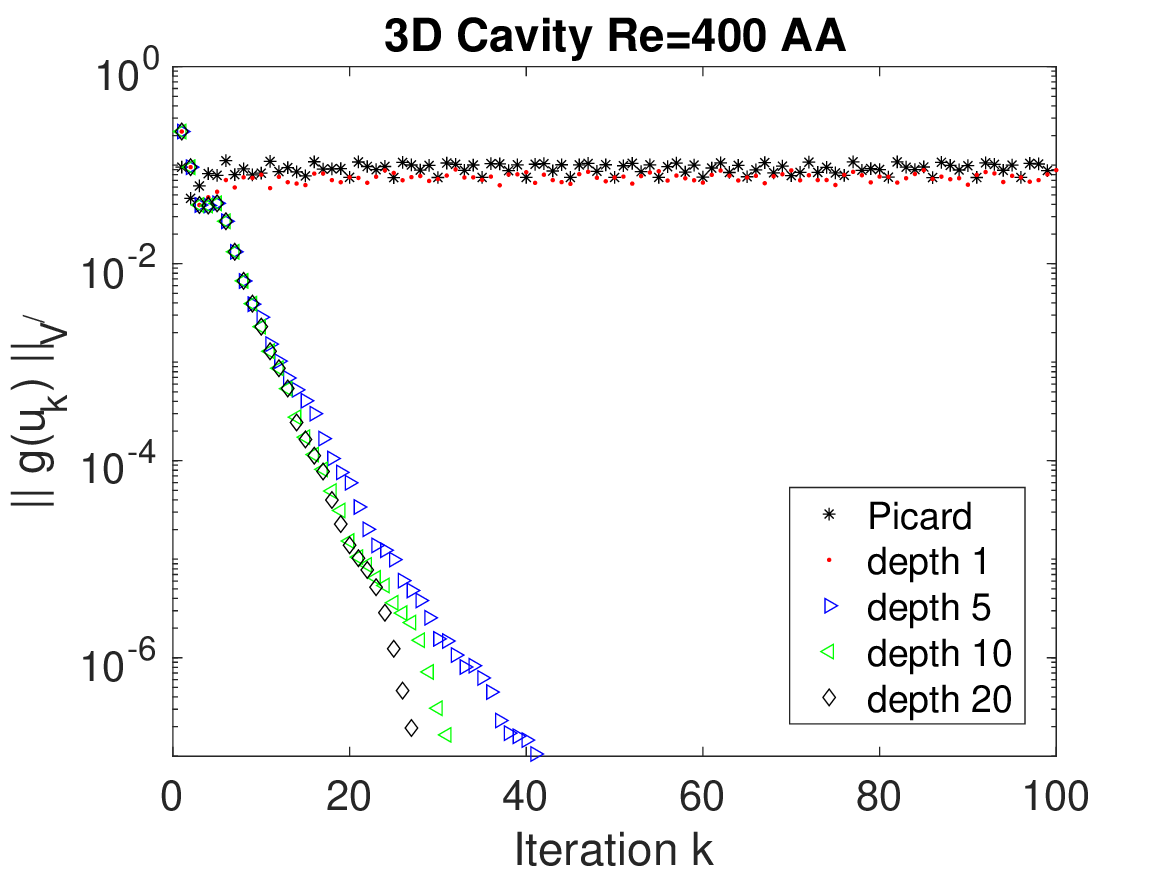}  
\includegraphics[width = .32\textwidth, height=.23\textwidth,viewport=0 0 530 420, clip]{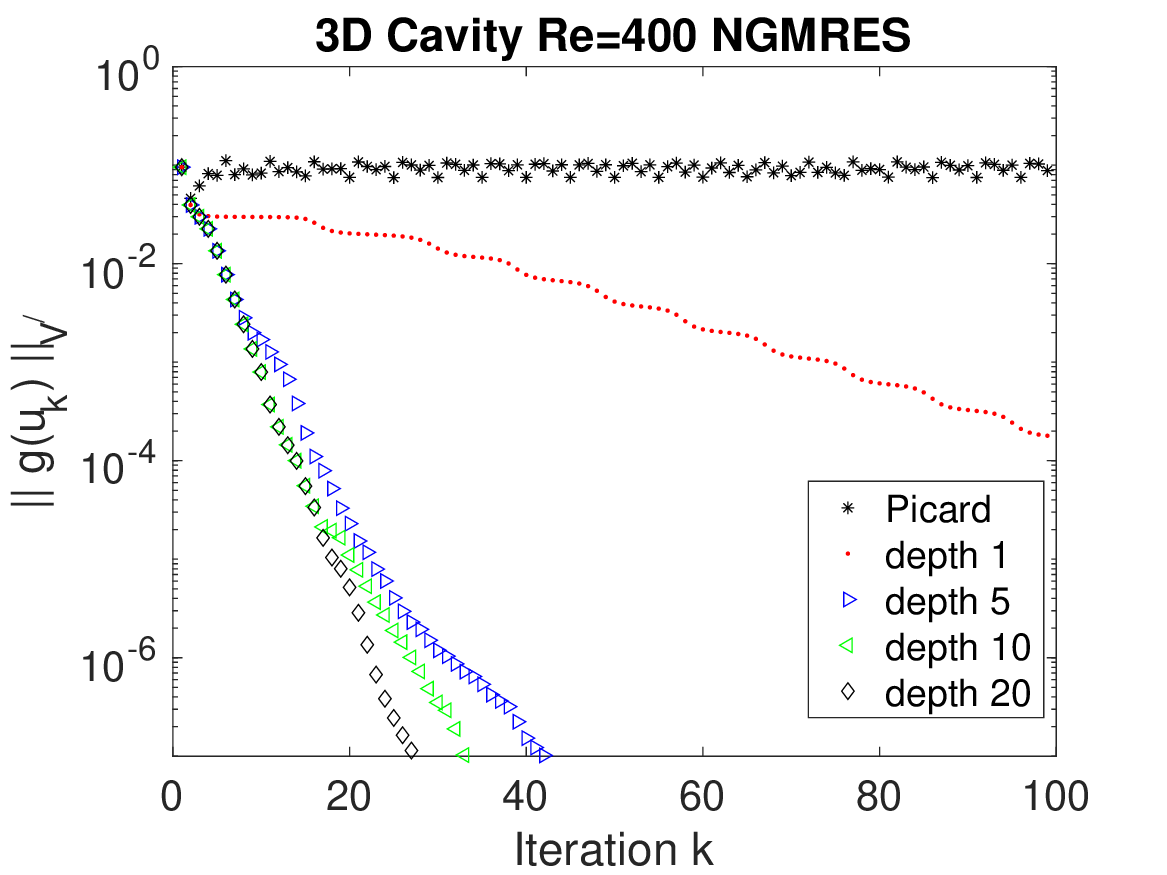} \\
\includegraphics[width = .32\textwidth, height=.23\textwidth,viewport=0 0 530 420, clip]{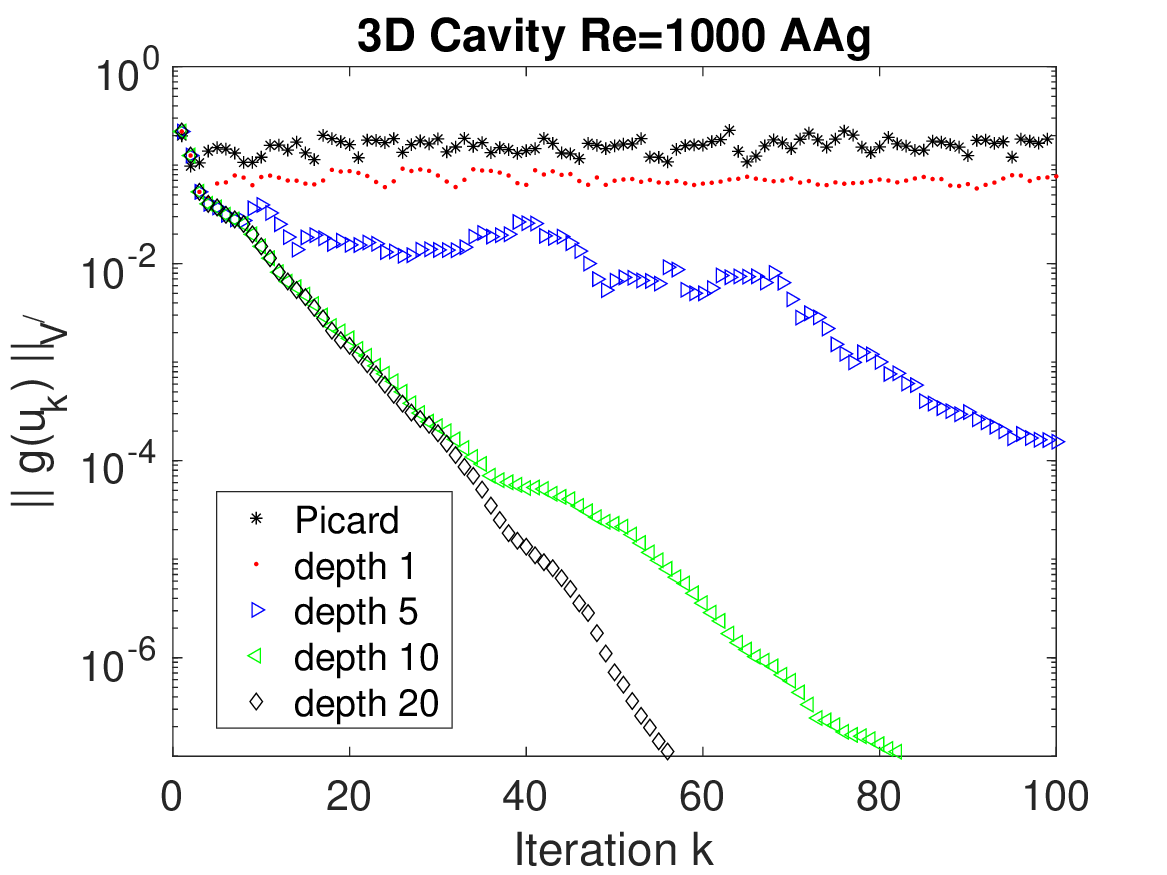}  
\includegraphics[width = .32\textwidth, height=.23\textwidth,viewport=0 0 530 420, clip]{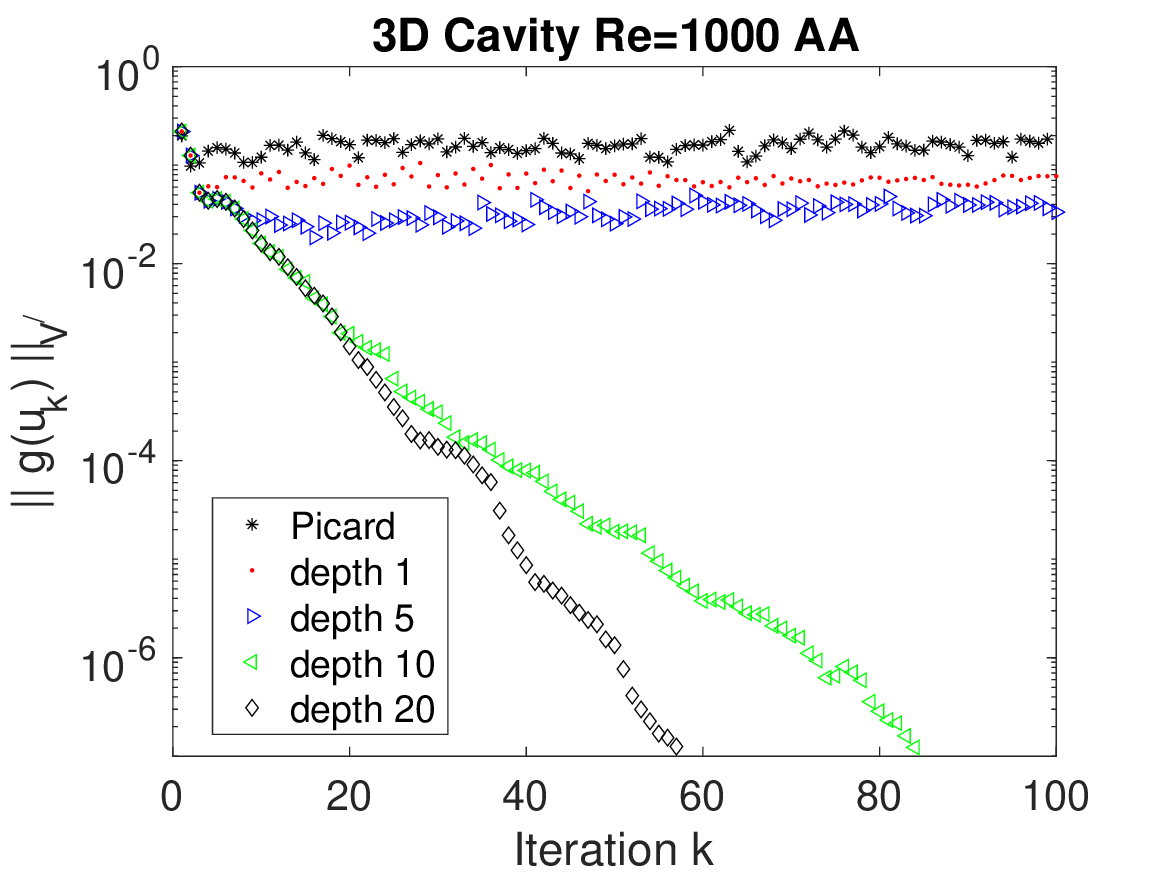}  
\includegraphics[width = .32\textwidth, height=.23\textwidth,viewport=0 0 530 420, clip]{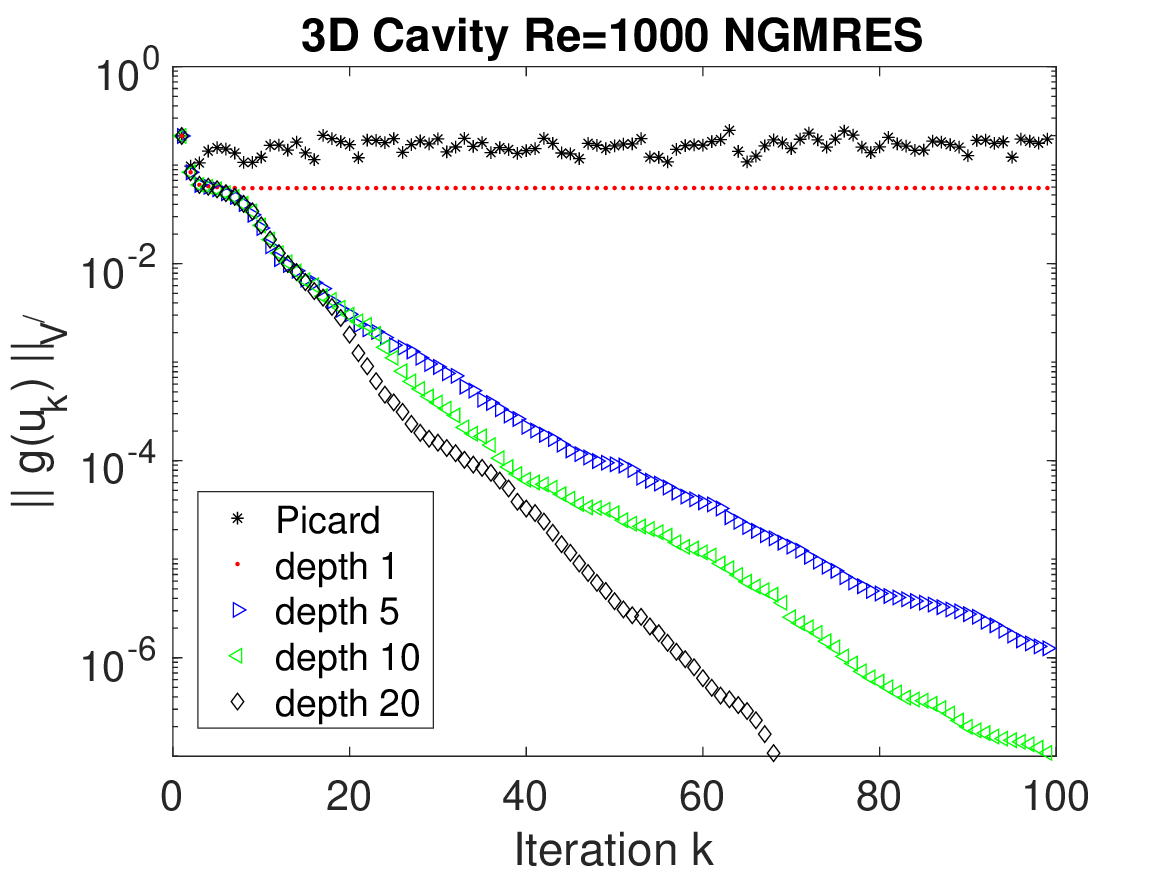} \\\includegraphics[width = .32\textwidth, height=.23\textwidth,viewport=0 0 530 420, clip]{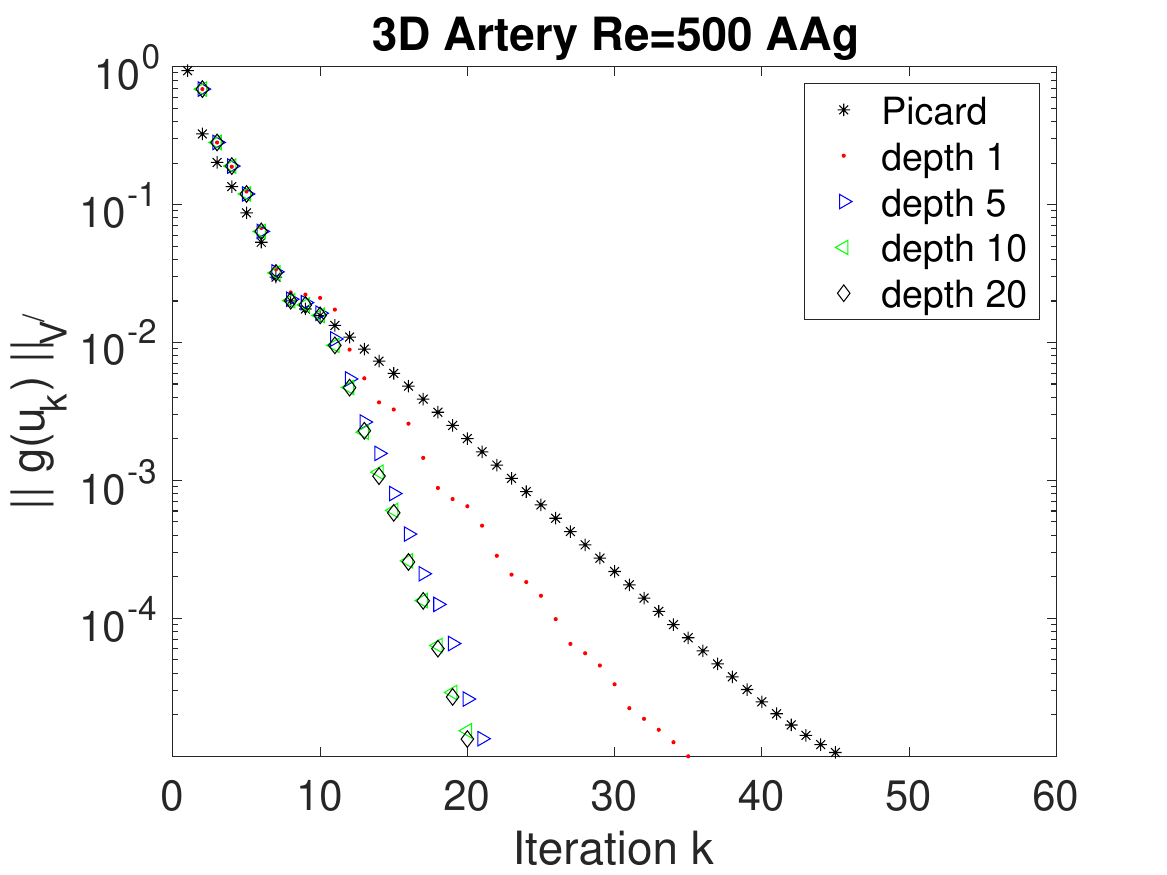}  
\includegraphics[width = .32\textwidth, height=.23\textwidth,viewport=0 0 530 420, clip]{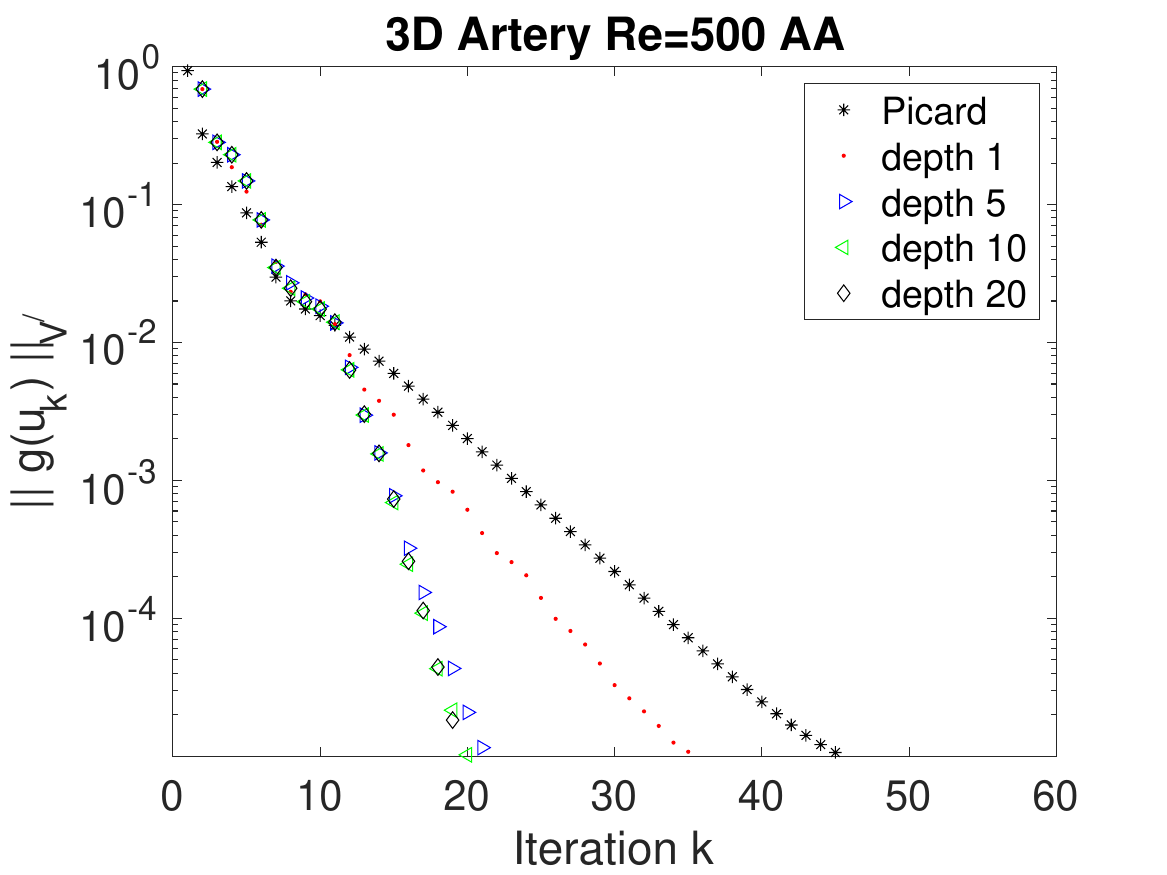}  
\includegraphics[width = .32\textwidth, height=.23\textwidth,viewport=0 0 530 420, clip]{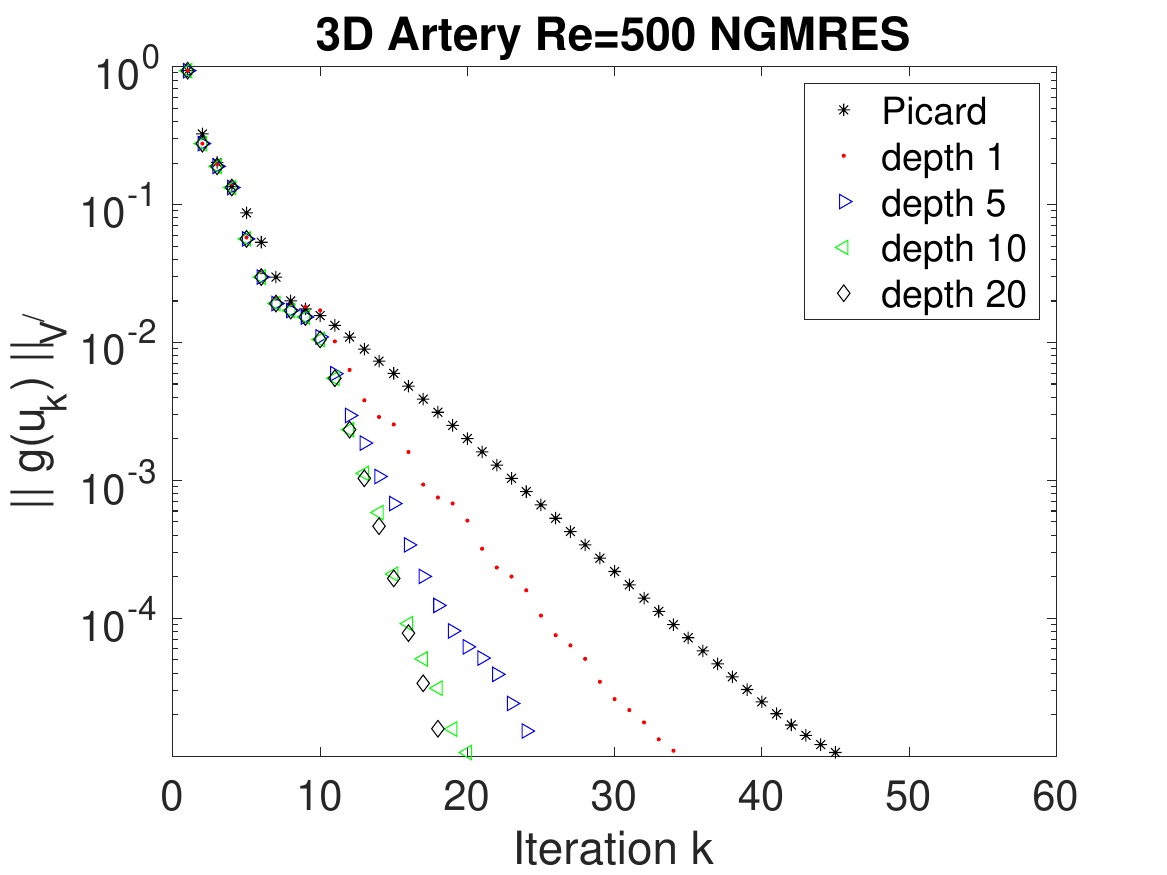} 
\caption{\label{compare1} The plots above show convergence of AAg, AA and NGMRES (left to right) applied to Picard for the NSE for 2D flow past a block, 3D driven cavity, and 3D stenotic artery.}
\end{figure}

For our next test, we compare convergence of AAg-Picard with AA-Picard and NGMRES-Picard, for each of the test problems.  Convergence is compared using the $V'$ norm, and various depths are used (noting that AA and AAg corresponds to $m=1$, but NGMRES depth 1 corresponds to $m=0$). The results are shown in Figure \ref{compare1}.
Overall, we observe similar convergence behavior between the three methods.  On 4 of the 5 tests (excluding $Re$=150 2D channel flow past a block), AA-Picard and AAg-Picard are slightly better overall compared to NGMRES-Picard.  But for $Re$=150 2D channel flow past a block, AA-Picard and AAg-Picard perform much better than NGMRES-Picard; here, AA-Picard and AAg-Picard converge in under 100 iterations (or likely will soon after 100 iterations) for depths 5 and greater, while NGMRES-Picard never converges in under 100 iterations and only for depth $\infty$ does it appear that it will eventually converge.

In addition to convergence comparison, we also compare timings.  Using AAg and NMGRES requires the optimization norm be the $V'$ norm (as shown in \cite{HR26}, bad results are obtained in 3D if $\ell^2$ is used instead).  Unfortunately, to use the $V'$ norm means that an additional Stokes solve needs performed at each iteration.  Hence a cost comparison is in order, and we did this for each of our tests.  For the 2D tests, a full LUPQ factorization (i.e. LU with row and column pivoting to minimize fill-in)
of the Stokes matrix was performed at the start and reused throughout the computation.  For 3D tests, an LUPQ factorization of the Stokes velocity block is saved, and the same type of Augmented Lagrangian solver as for the Picard solves is used for the Stokes solve.  With these approaches, the extra Stokes solve adds roughly 20\% to the total computational time for all of our tests (increase from about 6 seconds to nearly 8 seconds on average per iteration for the 2D block, and an increase from 16 to 19.5 seconds for the 3D cavity, on LR's workstation running Matlab 2023b with 196 GB RAM). 

\subsection{Adaptive depth with AAg-Picard}

\begin{figure}[h!]
\center
\includegraphics[width = .45\textwidth, height=.4\textwidth,viewport=0 0 640 520, clip]{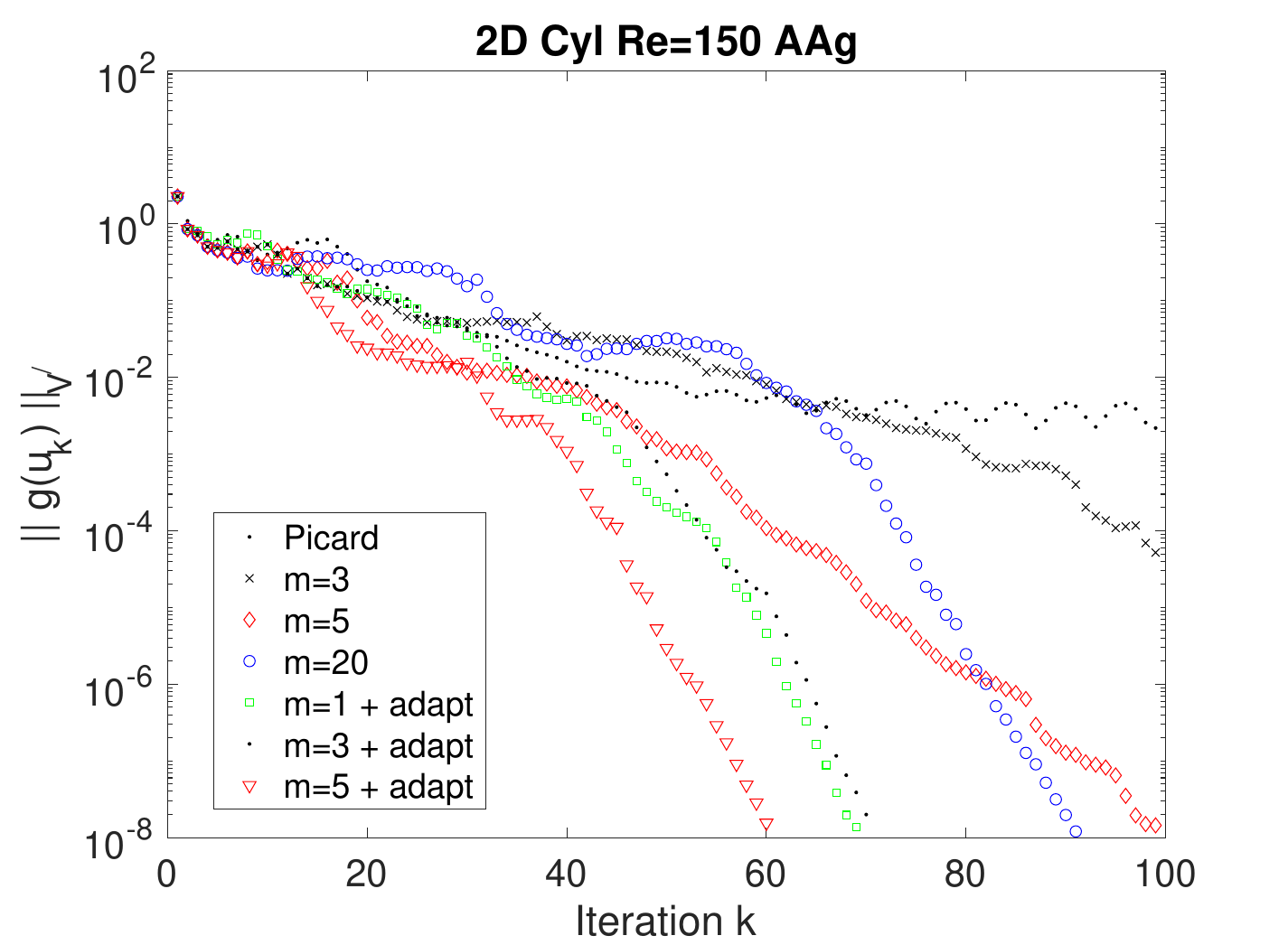}  
\includegraphics[width = .45\textwidth, height=.4\textwidth,viewport=0 0 640 520, clip]{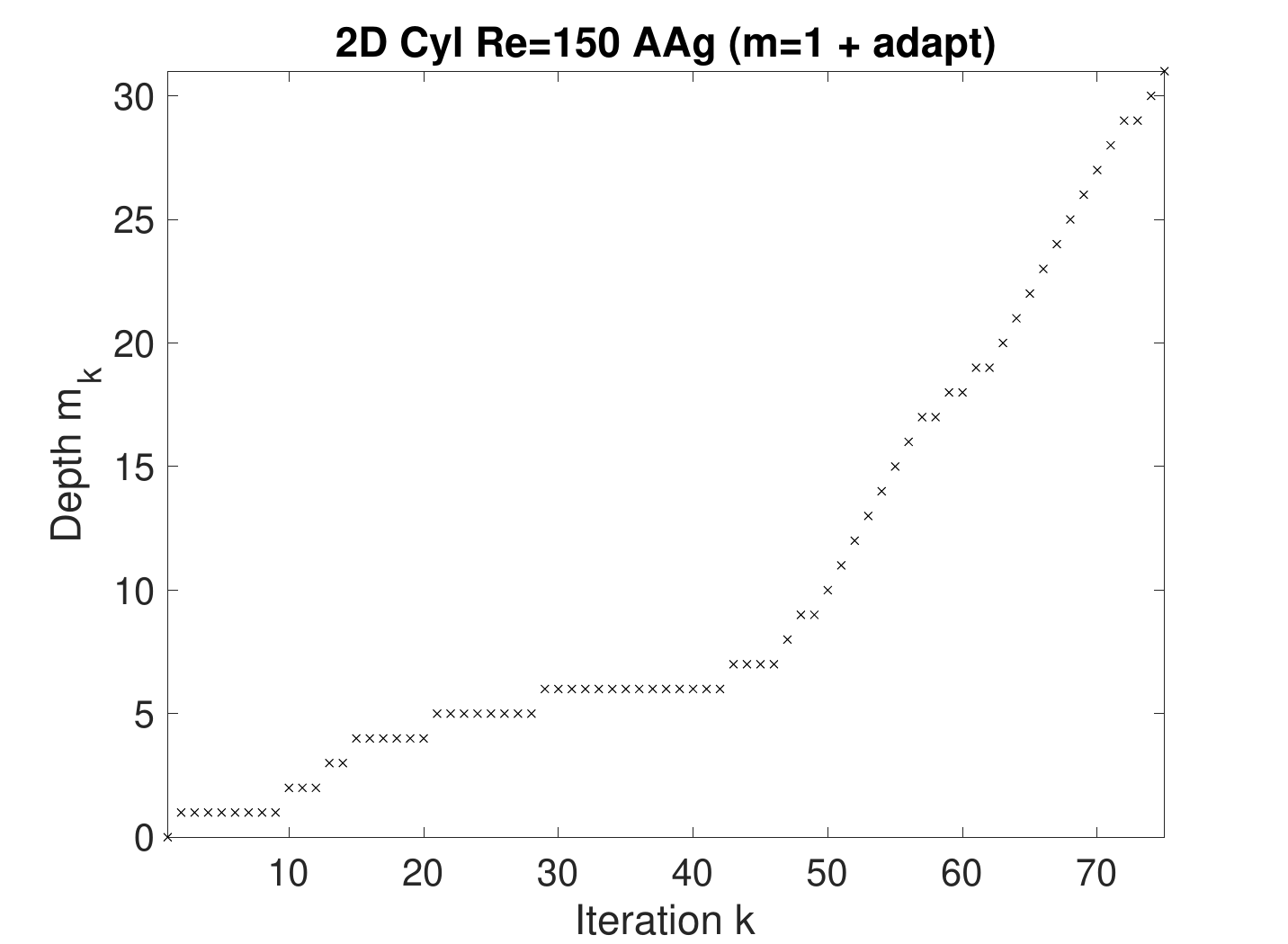}  
\caption{\label{compare2} The plots above show convergence of AAg-Picard for 2D channel flow past a block with $Re$=150 with varying constant depths and adaptive depths (left), and $m_k$ vs $k$ for the adaptive test starting with maximum $m=1$ (right).}
\end{figure}

As observed in Theorem \ref{thmm} and in the numerical tests above, $\gamma_k$ is a sharp predictor of the AAg Lipschitz constant, i.e. the linear convergence rate when the higher order terms do not play a role.  We now use this observation to construct an adaptive strategy for choosing the depth $m$ for AAg. 

Theorem \ref{thmm} shows that at iteration $k$, large $m$ can be bad when $u_{k-m+1}$ is far from the root due to the higher order terms being non-negligible and perhaps even dominant.  Hence while the iteration remains far from the root, using large $m$ can slow or even prevent convergence.  This is observed in the $Re$=150 2D channel flow past a cylinder tests, where $m=\infty$ failed but smaller $m$ performed better.  This is also a well-known phenomena for AA-Picard for NSE, where the combination of  small $m$ early and larger $m$ later in the iteration is shown to give better overall convergence \cite{PR25}.  

One advantage AAg-Picard has over AA-Picard, however, is the sharp estimate $\gamma_k$ of the Lipschitz constant.  Using this, if one checks after step $k$ that $\gamma_k \approx \frac{ \| g(u_k)\|_{V'}}{ \| g(u_{k-1})\|_{V'}}$, then the higher order terms are not playing a significant role in the convergence.  Thus, when this happens, increasing $m$ by 1 for the next iteration will help convergence by reducing the linear convergence rate contribution since the optimization is done over a larger space, but will not allow the higher order terms to become significant and have a negative impact on convergence. In the tests below we checked if 
\[
\bigg| \gamma_k - \frac{ \| g(u_k)\|_{V'}}{ \| g(u_{k-1})\|_{V'}}\bigg| <0.01,
\]
and if so then we increase $m$ by 1.

\begin{figure}[h!]
\center
\includegraphics[width = .45\textwidth, height=.4\textwidth,viewport=0 0 640 520, clip]{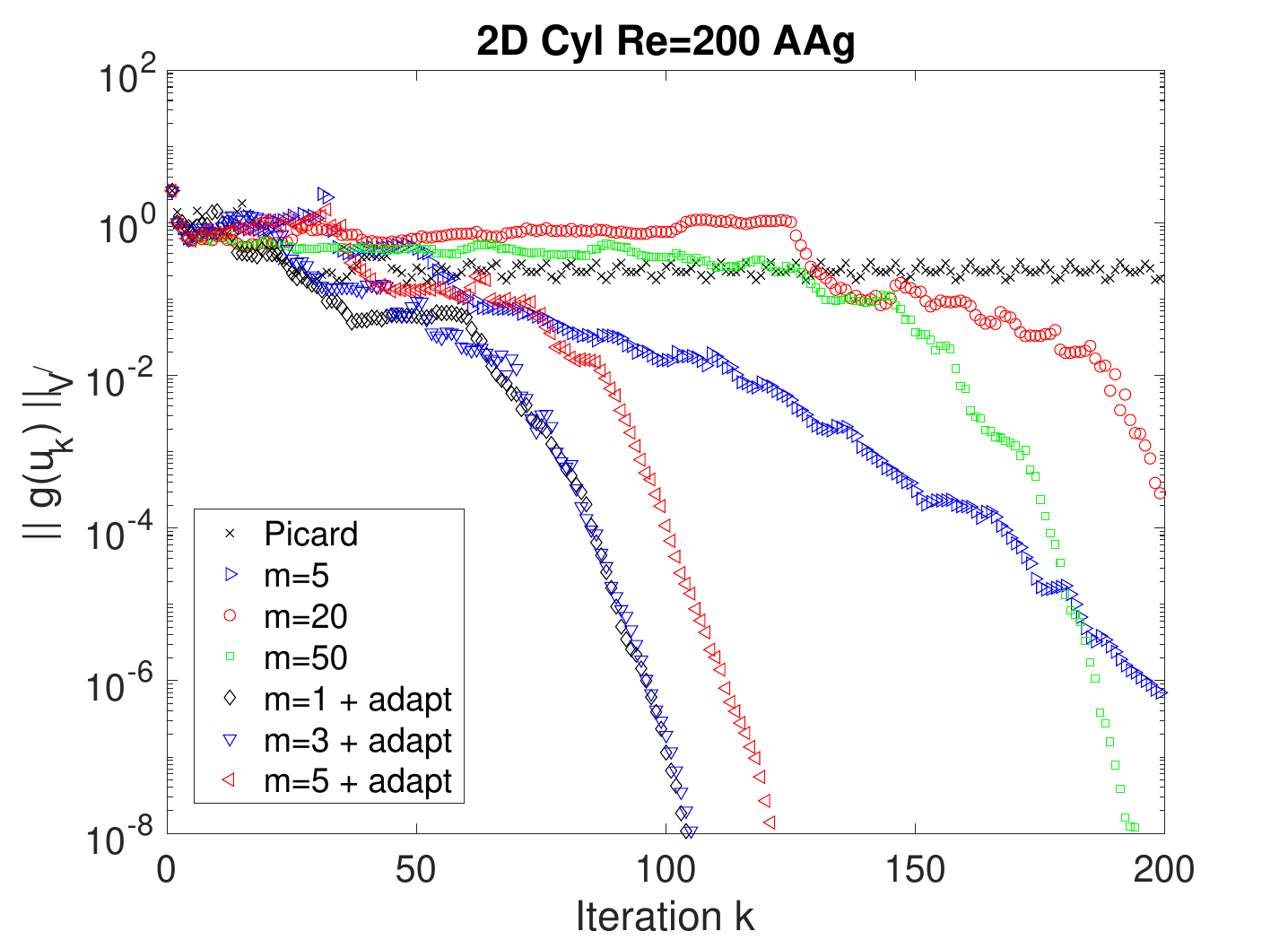}  
\includegraphics[width = .45\textwidth, height=.4\textwidth,viewport=0 0 640 520, clip]{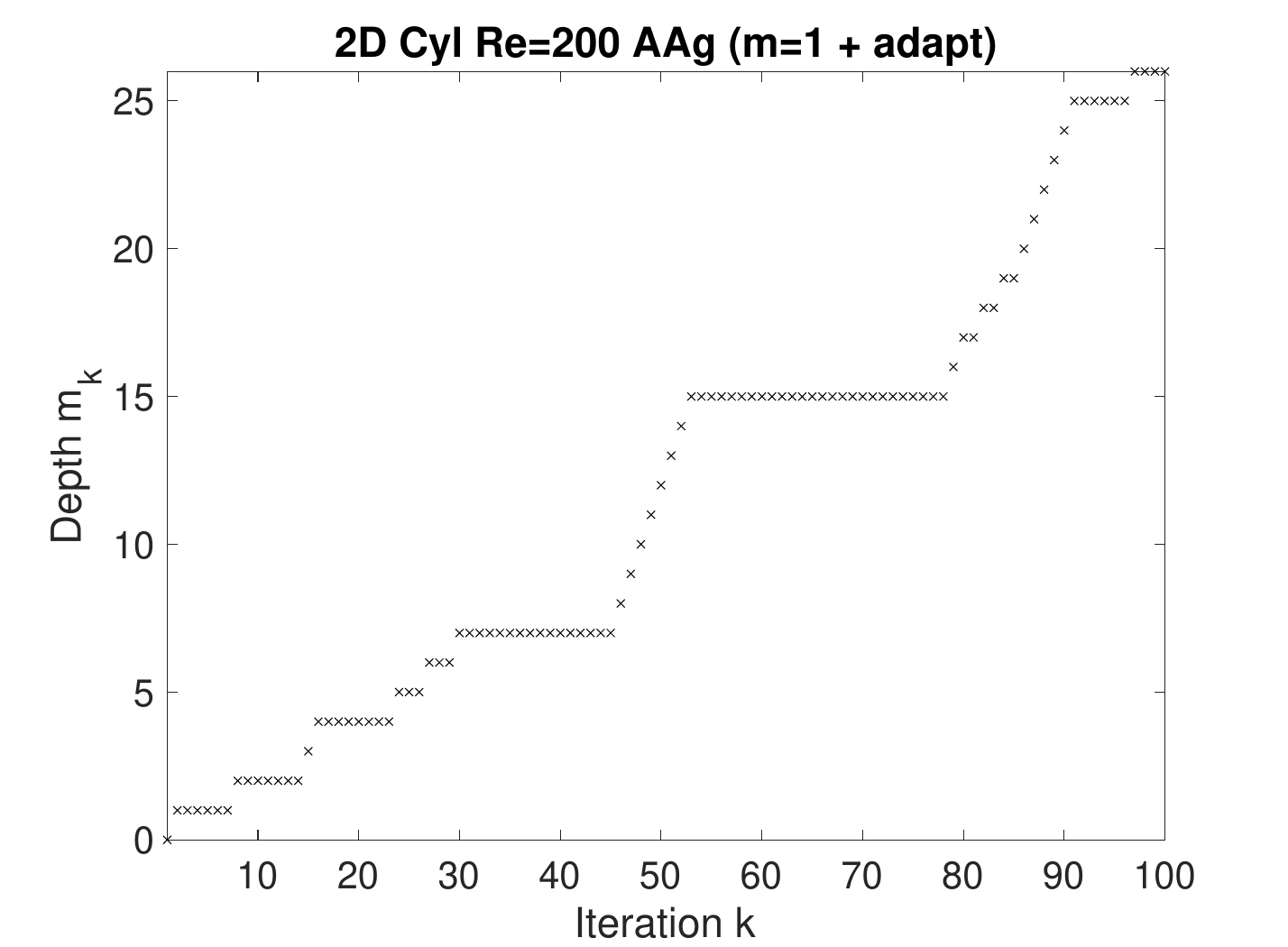}  
\caption{\label{compare3} The plots above show convergence of AAg-Picard for 2D channel flow past a block with $Re$=150 with varying constant depths and adaptive depths (left), and $m_k$ vs $k$ for the adaptive test starting with maximum $m=1$ (right).}
\end{figure}

We test this idea first on 2D channel flow past a cylinder with $Re$=150, and results are shown in Figure \ref{compare2} for AAg-Picard with varying constant depths and adaptive depths that start with a small maximum $m$=1, 3, 5.  We observe from the results that for $Re$=150, the adaptive depth computations perform much better than constant depths 3, 5, and 20 (recall from Figure \ref{compare1} that $m=20$ is the quasi-optimal choice for constant depth).  Of the three adaptive tests, using maximum $m=5$ as the starting maximum gives the best results.  Also from the figure we observe that adaptive $m$ increases slowly at first, but then by $k=50$ (when its nonlinear residual is around $10^{-3}$), it increases at almost every iteration.

The success of the adaptive strategy is even clearer for the case of $Re$=200, see Figure \ref{compare3}.  Here, AAg-Picard with constant depths $m=5$ and $m=20$ fail, with $m=50$ converging finally in 193 iterations.  The adaptive depth tests with $m=1$ and 3 as the initial maximum depth converge in 105 iterations, and the test with initial maximum of $m=5$ converged in 122.  We again observe that adaptive $m$ increases slowly at first, and then more rapidly later in the iteration. 


\begin{figure}[h!]
\center
\includegraphics[width = .45\textwidth, height=.4\textwidth,viewport=0 0 640 520, clip]{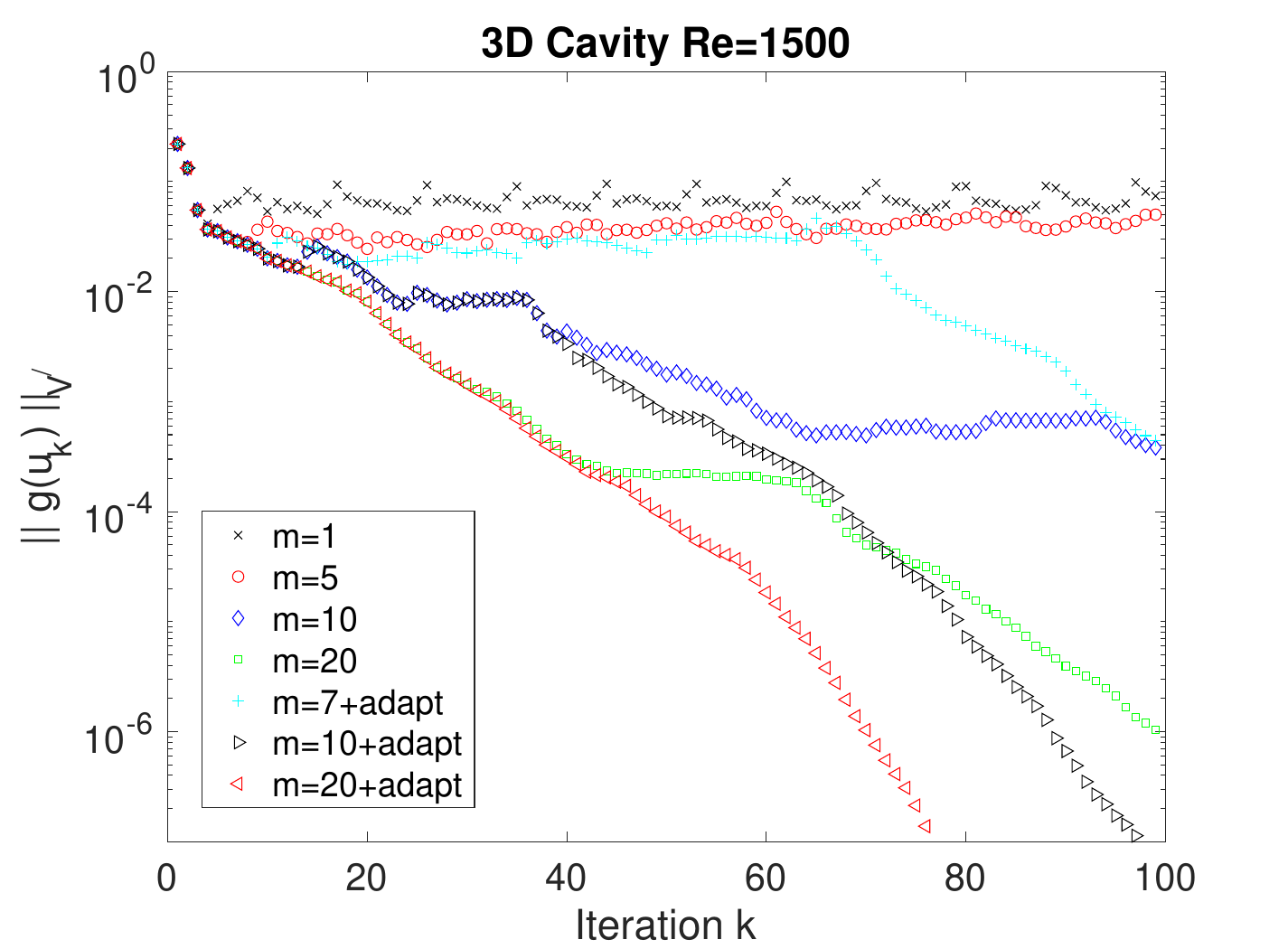}  
\includegraphics[width = .45\textwidth, height=.4\textwidth,viewport=0 0 640 520, clip]{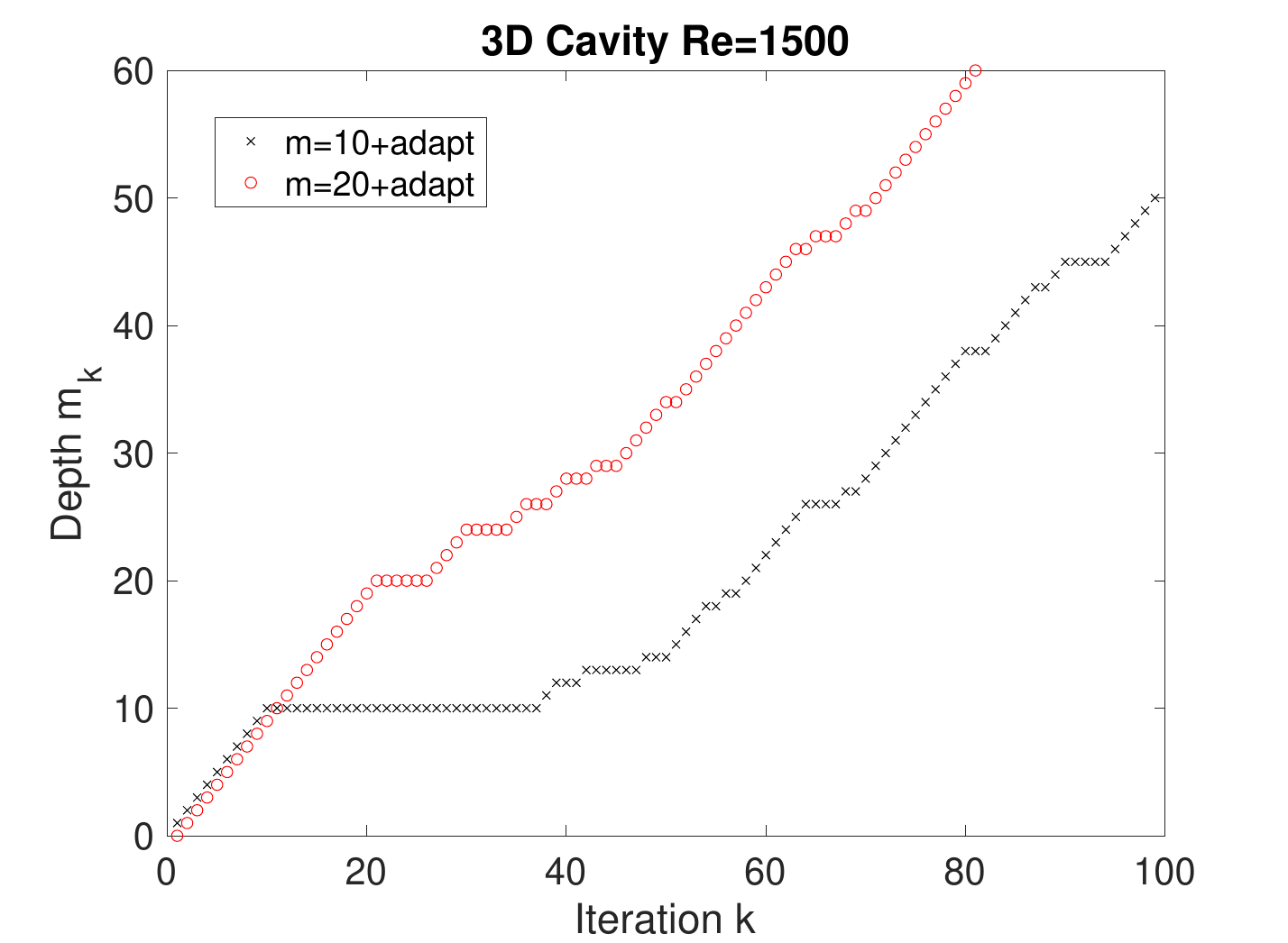}  
\caption{\label{compare4} The plots above show convergence of AAg-Picard  for 3D cavity flow with $Re$=1500, with varying constant depths and adaptive depths (left), and $m_k$ vs $k$ for the adaptive tests starting with maximum $m=10$ and 20 (right).}
\end{figure}

Our last test with the adaptive depth strategy is on the 3D cavity benchmark using $Re$=1500, and results are shown in Figure \ref{compare4}.  We observe that constant depths of $m=$1, 5 fail, $m=10$ reduces the nonlinear residual by 3 orders of magnitude and may eventually converge even though it does not converge in 100 iterations, and $m=20$ nearly converges in 100 iterations and it appears that it will converge by 120 iterations.  A key observation here is that $m$ needs to be sufficiently large to reduce the first order term coefficient in the nonlinear residual expansion, otherwise no convergence can be achieved.
For the adaptive depth tests, using $m=7$ as the initial maximum did not perform well, although finally by iteration 70 it begins to converge but still has a nonlinear residual of $10^{-3}$ after 100 iterations.  Adaptive depths using $m=10$ and 20 performed much better, especially the latter, and achieved convergence in 95 and 75 iterations, respectively.  In particular we observe that around iteration 40, the constant depth $m=20$ convergence plot and  adaptive depth with initial maximum $m=20$ convergence plot finally split off, with the adaptive test performing much better after $k=40$.

The results of the adaptive depth tests above show that using $\gamma_k$ is a cheap and effective tool to determine when $m$ can be increased so that the advantage of doing the optimization over a higher dimensional space can be realized without the penalty of higher order terms adversely affecting convergence.

\section{Conclusions}\label{sec:con}
In this work, we study a blended AA/NGMRES method called AAg analytically and numerically for the Picard iteration for the NSE, extending the initial AAg results from  \cite{he2026propertiespng}.  We provide a convergence analysis and a proof of acceleration, for general depths.  The analysis identifies the appropriate norm for the optimization problem and proves that AAg improves the linear convergence rate of the Picard iteration (and adds higher order terms).
While usual AA also has a similar theory, the AAg has a distinct feature in that the analysis provides a sharp prediction of the linear convergence at each step. The proofs of AAg convergence theory are much more similar to that of NGMRES from \cite{HR26} than AA \cite{PRX19,PR25}, i.e. they are simple and straightforward once formulated in the right way, which suggests that the differences in analysis have much more to do with using fixed point residuals vs. true nonlinear residuals (and not differences in the extrapolation definitions).

We present numerical results that illustrate  our new convergence theory, compare the performance of AAg, AA and NGMRES for several test problems, and test a new adaptive depth strategy that is based on the accuracy of the linear convergence rate prediction term. We observe similar convergence behavior between AAg, AA and NGMRES using constant depths, although NMGRES performed better using small constant depths, and for one test we observed AA and AAg performed much better than NGMRES.  
We note that due to the use of the nonlinear residuals and therefore dual norms in the optimization problem, AAg and NGMRES are mildly more expensive at each iteration due to the extra Stokes solve needed for the optimization problem.   Our tests also showed that a first attempt at an adaptive strategy based on $\gamma_k$ yielded a strong improvement in performance compared to constant depth choices. 

For future work, the new analysis of AAg and its use to develop adaptive depth strategies should be extended beyond the application of the Picard iteration for the NSE studied herein.  Hence, next steps include extending our results to create a general theory for AAg, testing it on more applications including constraint optimization, and developing more sophisticated adaptive depth strategies.  Additionally, while AAg is a blend of AA and NGMRES that uses AA extrapolation and NGMRES' nonlinear residuals in the optimization problem, the converse of NGMRES with the AA extrapolation should be explored.  In particular, this alternative AA/NGMRES blend may allow for application to superlinearly converging iterations but without the extra cost of using dual norms in the optimization problem.

\section{Declarations}

{\bf Funding:} Author LR acknowledges funding support from the US Department of Energy grant DE-SC0025292.\\ \ \\
{\bf Data Availability Statement:}  The datasets generated during and/or analyzed during the current study are available from the corresponding author on reasonable request.\\ \ \\
{\bf Code Availability Statement:}  The computer codes used during the current study are available from the corresponding author on reasonable request.


\section*{Appendix}

We now prove Lemma \ref{elem}.  This result is a generalization of the lemma from \cite[Section~3.2]{HR26}.  In that paper, the result claimed only for $u_{k-1}$ and $u_k$ in $V$ which are NGMRES iterates.  However, the proof holds exactly the same if these functions are general in  $V$, provided the $q$ is the NSE Picard iteration and $g$ is defined by \eqref{gdefn} (i.e. the exact setting we are considering herein).  For completeness, we give the proof here in the Appendix.

\begin{proof}[Proof of Lemma \ref{elem}]
Writing $d = \hat u - \hat v = \hat u - q(\hat v) + q(\hat v) - \hat v$, we have that
\[
d = (\hat u - q(\hat v)) + w,
\]
where $w=q(\hat v)-\hat v$.  Hence by the triangle inequality,
\begin{equation}
\| \nabla d \| \le \| \nabla ( \hat u - q(\hat v)) \| + \| \nabla w \|. \label{ebound}
\end{equation}
Since $w$ is a Picard residual, we have by \eqref{wg1} that
\begin{equation}
    \| \nabla w \| \le \nu^{-1} \| g(\hat v)\|_{V'}. \label{ebound1a}
\end{equation}
Thus to bound $\| \nabla d\|$, it remains only to bound $\| \nabla (\hat u - q(\hat v)) \|$.
  Using  the definitions of $g(u)$ and of \eqref{NSEPic} (with input $\hat v$ and thus solution $q(\hat v)$) gives the following two equations, for all $\chi\in V$,
\begin{align*}
\nu(\nabla \hat u,\nabla \chi) + (\hat u\cdot\nabla \hat u,\chi)  & = (f,\chi) + (g(\hat u),\chi), \\
\nu(\nabla q(\hat v),\nabla \chi) + (\hat v\cdot\nabla q(\hat v),\chi)  & = (f,\chi). 
\end{align*}
Subtracting the above equations gives  for all $\chi\in V$,
\[
\nu(\nabla (\hat u - q(\hat v)),\nabla \chi) + (\hat u\cdot\nabla (\hat u - q(\hat v)),\chi) + (d \cdot\nabla q(\hat v),\chi) = (g(\hat u),\chi).
\]
Choosing $\chi=(\hat u - q(\hat v))$ vanishes the first nonlinear term above due to skew-symmetry since $\hat v\in V$, and gives
\begin{align*}
\nu \| \nabla (\hat u - q(\hat v)) \|^2 & = - (d \cdot\nabla q(\hat v),\hat u - q(\hat v)) + (g(\hat u),\hat u - q(\hat v)) \\
& \le M \| \nabla d \| \| \nabla q(\hat v) \| \| \nabla (\hat u -q(\hat v)) \| + \| g(\hat u) \|_{V'}\| \nabla (\hat u - q(\hat v)) \|,
\end{align*}
with the last step using  \eqref{bbounds}.  After dividing by sides
by $\nu \| \nabla (\hat u - q(\hat v)) \|$, using \eqref{Picbound} to bound $\| \nabla q(\hat v) \|\le \nu^{-1} \| f \|_{-1}$, and recalling $\kappa=M\nu^{-2} \| f\|_{-1}$, this reduces to 
\[
\| \nabla (\hat u - q(\hat v)) \| \le \kappa \| \nabla d \| + \nu^{-1}  \| g(\hat u) \|_{V'}.
\]
Combining this bound with \eqref{ebound} and \eqref{ebound1a}, we get the estimate
\[
\| \nabla e \| \le \frac{\nu^{-1}}{1-\kappa} \left(  \| g(\hat u) \|_{V'} +  \| g(\hat v) \|_{V'} \right),
\]
which finishes the proof.
\end{proof}

We give here proofs of identities used in the analysis of Section 3.

\begin{proof}[Proof of Lemma \ref{ilemma}]
This proof uses the definition of $u_{k+1}$ from AAg, that $\sum_{j=k-m+1}^{k+1} \alpha_j =1$, and appropriately grouping terms.  For the first identity, expanding $u_{k+1}$ and using that $\sum_{j=k-m+1}^{k+1} \alpha_j =1$, we get that
\begin{align*}
u_{k+1} - \tilde u_{k+1} & = -(1-\alpha_{k+1}) \tilde u_{k+1} + \alpha_k \tilde u_k + ...+ \alpha_{k-m+1} \tilde u_{k-m+1}\\
& = -(1-\alpha_{k+1}) \tilde e_{k+1} - (1-\alpha_{k+1}) \tilde u_{k} + \alpha_k \tilde u_k + ...+ \alpha_{k-m+1} \tilde u_{k-m+1}\\
& = ... \\
& = -(1-\alpha_{k+1}) \tilde e_{k+1} - (1-\alpha_{k+1}-\alpha_k) \tilde e_{k} - ...- \alpha_{k-m+1} \tilde e_{k-m+2},
\end{align*}
Similarly for the second and third identities,
\begin{align*}
u_{k+1} - \tilde u_{k} & = \alpha_{k+1} \tilde u_{k+1} - (1-\alpha_k) \tilde u_k + ...+ \alpha_{k-m+1} \tilde u_{k-m+1}\\
& = \alpha_{k+1} \tilde e_{k+1} - (1-\alpha_{k+1} - \alpha_k) \tilde u_k + \alpha_{k-1} \tilde u_{k-1} ...+ \alpha_{k-m+1} \tilde u_{k-m+1}\\
& =  \alpha_{k+1} \tilde e_{k+1} - (1-\alpha_{k+1} - \alpha_k) \tilde e_k - (1-\alpha_{k+1} - \alpha_k - \alpha_{k-1}) \tilde u_{k-1} \\
& \ \ \ \ + \alpha_{k-2} \tilde u_{k-2} ...+ \alpha_{k-m+1} \tilde u_{k-m+1}\\
& = ... \\
& =  \alpha_{k+1} \tilde e_{k+1} - (1-\alpha_{k+1} - \alpha_k) \tilde e_k - (1-\alpha_{k+1} - \alpha_k - \alpha_{k-1}) \tilde e_{k-1} \\
& \ \ \ \ -  ...-  {\alpha_{k-m+1}} \tilde e_{k-m+2}
\end{align*}
\begin{align*}
u_{k+1} - \tilde u_{k-1} & = \alpha_{k+1} \tilde u_{k+1} + \alpha_k \tilde u_k - (1-\alpha_{k-1}) \tilde u_{k-1} + \alpha_{k-2} \tilde u_{k-2} + ...+ \alpha_{k-m+1} \tilde u_{k-m+1}\\
& = \alpha_{k+1} \tilde e_{k+1} +  (\alpha_{k+1} + \alpha_k) \tilde u_k - (1-\alpha_{k-1}) \tilde u_{k-1} + \alpha_{k-2} \tilde u_{k-2} + ...+ \alpha_{k-m+1} \tilde u_{k-m+1}\\
& = \alpha_{k+1} \tilde e_{k+1} +  (\alpha_{k+1} + \alpha_k) \tilde e_k - (1-\alpha_{k+1}-\alpha_{k}- \alpha_{k-1}) \tilde u_{k-1} + \alpha_{k-2} \tilde u_{k-2} \\
& \ \ \ \ + ...+ \alpha_{k-m+1} \tilde u_{k-m+1}\\
& = ... \\
& = \alpha_{k+1} \tilde e_{k+1} +  (\alpha_{k+1} + \alpha_k) \tilde e_k - (1-\alpha_{k+1}-\alpha_{k}- \alpha_{k-1}) \tilde e_{k-1} 
\\
& \ \ \ \ - ...- {\alpha_{k-m+1}} \tilde e_{k-m+2}
\end{align*}
This same process can be repeated for general $j<k$ via
\begin{align*}
u_{k+1} - \tilde u_j & =\alpha_{k+1} \tilde e_{k+1} +  (\alpha_{k+1} + \alpha_k) \tilde e_k + ... +  (\alpha_{k+1} + \alpha_k + ... +\alpha_{j+1}) \tilde e_{j+1} \\
& \ \ \ \ (1 - \alpha_{k+1} - ... - \alpha_j)\tilde e_j - ... - {\alpha_{k-m+1}}\tilde e_{k-m+2}
\end{align*}

\end{proof}

We now establish \eqref{III1}.  This result is simply an identity, although a long one, for which we use nothing more than definitions, expansions, and Lemma \ref{ilemma}.  Below, for simplicity of notation, equality is meant in the $V'$ sense, i.e. weakly when tested with an arbitary function from $V$.

Expanding $u_{k+1}$ and using that $\sum_{j=k-m+1}^{k+1}\alpha_j=1$ we obtain
\begin{align*}
g(u_{k+1}) &= u_{k+1} \cdot\nabla u_{k+1} - \nu\Delta u_{k+1} - f \\ 
&= \alpha_{k+1} \left( u_{k+1} \cdot\nabla \tilde u_{k+1} - \nu\Delta \tilde u_{k+1} - f\right) +
\alpha_k \left( u_{k+1} \cdot\nabla \tilde  u_{k} - \nu\Delta \tilde u_k - f\right)  \\
& \ \ \ \ + ... + \alpha_{k-m+1} \left(   u_{k+1}\cdot\nabla \tilde u_{k-m+1} - \nu\Delta \tilde u_{k-m+1}  - f\right) \\
& = \alpha_{k+1} g(\tilde u_{k+1}) + \alpha_{k} g(\tilde u_{k}) + ...  
+ \alpha_{k-m+1} g( \tilde u_{k-m+1} ) 
+ \alpha_{k+1}  (u_{k+1} - \tilde u_{k+1}) \cdot \nabla \tilde u_{k+1} \\
 & \ \ \ \  + \alpha_{k} (u_{k+1} - \tilde u_{k}) \cdot \nabla  \tilde u_{k} + ...
 + \alpha_{k-m+1}  (u_{k+1} - \tilde u_{k-m+1}) \cdot \nabla \tilde u_{k-m+1}. 
 \end{align*}
Hence we can write
\begin{align}
g(u_{k+1}) &= \alpha_{k+1} g(\tilde u_{k+1}) + \alpha_{k} g(\tilde u_{k}) + ...  
+ \alpha_{k-m+1} g( \tilde u_{k-m+1} )  + R, \label{Rexpansion0}
\end{align}
where
\begin{align*}
R &= \alpha_{k+1}  (u_{k+1} - \tilde u_{k+1}) \cdot \nabla \tilde u_{k+1} 
 + \alpha_{k} (u_{k+1} - \tilde u_{k}) \cdot \nabla  \tilde u_{k} + ...
 + \alpha_{k-m+1}  (u_{k+1} - \tilde u_{k-m+1}) \cdot \nabla \tilde u_{k-m+1}.
 \end{align*}
 We now analyze $R$ further.  Using Lemma \ref{ilemma} gives
\begin{align*}
 & R =   \\
 & \alpha_{k+1} \bigg( -(1-\alpha_{k+1}) \tilde e_{k+1} - (1-\alpha_{k+1}-\alpha_k) \tilde e_{k} - ...- \alpha_{k-m+1} \tilde e_{k-m+2} \bigg) \cdot \nabla \tilde u_{k+1} \\
& + \alpha_k \bigg( \alpha_{k+1} \tilde e_{k+1} - (1-\alpha_{k+1} - \alpha_k) \tilde e_k - (1-\alpha_{k+1} - \alpha_k - \alpha_{k-1}) \tilde e_{k-1} \\
& \ \ \ \  -  ...-  {\alpha_{k-m+1}} \tilde e_{k-m+2} \bigg) \cdot \nabla \tilde u_k \\
 & + \alpha_{k-1} \bigg(  \alpha_{k+1} \tilde e_{k+1} +  (\alpha_{k+1} + \alpha_k) \tilde e_k - (1-\alpha_{k+1}-\alpha_{k}- \alpha_{k-1}) \tilde e_{k-1} \\
 & \ \ \ \ - ...- {\alpha_{k-m+1}} \tilde e_{k-m+2} \bigg)  \cdot\nabla \tilde u_{k-1}\\
& + \alpha_j \bigg( \alpha_{k+1} \tilde e_{k+1} +  (\alpha_{k+1} + \alpha_k) \tilde e_k + ... +  (\alpha_{k+1} + \alpha_k + ... +\alpha_{j+1}) \tilde e_{j+1} + (1 - \alpha_{k+1} \\
& \ \ \ \ - ... - \alpha_j)\tilde e_j - ... - {\alpha_{k-m+1}}\tilde e_{k-m+2}
\bigg) \cdot\nabla \tilde u_j\\
& + ...\\
& + \alpha_{k-m+1} \bigg( \alpha_{k+1} \tilde e_{k+1} +  (\alpha_{k+1} + \alpha_k) \tilde e_k + ... +  (\alpha_{k+1} + \alpha_k + ... +\alpha_{j}) \tilde e_{j} + ... +  {\alpha_{k-m+1}}\tilde e_{k-m+2}
\bigg) \cdot\nabla \tilde u_{k-m+1}.
 \end{align*}
The terms containing a $\tilde e_{k+1}$ are as follows:
\begin{align*}
\alpha_{k+1} & \tilde e_{k+1} \cdot\nabla \bigg( -(1-\alpha_{k+1})\tilde u_{k+1} + \alpha_k \tilde u_k + \alpha_{k-1} \tilde u_{k-1} + ... + \alpha_{k-m+1} \tilde u_{k-m+1} \bigg) \\
& = \alpha_{k+1} \tilde e_{k+1} \cdot\nabla (u_{k+1} - \tilde u_{k+1}).
\end{align*}

The terms containing a $\tilde e_{k}$ are as follows:
\begin{align*}
 \tilde e_{k} & \cdot\nabla  (-\alpha_{k+1} \tilde u_{k+1} - \alpha_k \tilde u_k)
+ (\alpha_k + \alpha_{k+1})  \tilde e_k \cdot \nabla \bigg( \alpha_{k+1} \tilde u_{k+1} + ... + \alpha_{k-m+1} \tilde u_{k-m+1} \bigg) \\
& = \tilde e_{k}  \cdot\nabla (-\alpha_{k+1} \tilde u_{k+1} - \alpha_k \tilde u_k)
+ (\alpha_k + \alpha_{k+1})  \tilde e_k \cdot \nabla u_{k+1}\\
& = \alpha_k \tilde e_{k}  \cdot\nabla (u_{k+1} - \tilde u_k) + \alpha_{k+1} \tilde e_{k}  \cdot\nabla (u_{k+1} - \tilde u_{k+1}). 
\end{align*}
Similarly, the terms with $\tilde e_j$ reduce to
\[
\sum_{i=j}^{k+1} \alpha_{i}\tilde e_j \cdot\nabla (u_{k+1} - \tilde u_{i}) .
\] 
We can now write $R$ as
\begin{equation}
R = \sum_{j=k-m+2}^{k+1} \sum_{i=j}^{k+1} \alpha_{i}\tilde e_j \cdot\nabla (u_{k+1} - \tilde u_{i}).\label{Rexpansion}
\end{equation}

\end{document}